\newtheorem{thm}{Theorem}
\newtheorem{lem}{Lemma}
\newtheorem*{mydef}{Definition}
\newtheorem*{Hyp H}{Hypothesis (H)}
\newtheorem{rem}{Remark}
\newtheorem{prop}{Proposition}
\numberwithin{equation}{section}
\begin{document}

\title{On the Distribution of the Number of Lattice Points in Norm Balls on the Heisenberg Groups}

\author{YOAV A. GATH}
\address{Department of Mathematics\\ Technion - Israel Institute of Technology\\ Haifa 3200003\\ Israel}
\curraddr{}
\email{yoavgat@campus.technion.ac.il}
\thanks{}

\subjclass[2010]{11P21, 11K70, 62E20}

\keywords{Heisenberg groups, Lattice points in norm balls, limiting distribution}

\date{\today}

\dedicatory{}

\begin{abstract}
We investigate the fluctuations in the number of integral lattice points on the Heisenberg groups which lie inside a Cygan-Kor{\'a}nyi norm ball of large radius. Let $\mathcal{E}_{q}(x)=\big|\mathbb{Z}^{2q+1}\cap\delta_{x}\mathcal{B}\big|-\textit{vol}\big(\mathcal{B}\big)x^{2q+2}$ denote the error term which occurs for this lattice point counting problem on the Heisenberg group $\mathbb{H}_{q}$, where $\mathcal{B}$ is the unit ball in the Cygan-Kor{\'a}nyi norm and $\delta_{x}$ is the Heisenberg-dilation by $x>0$. The characteristic behavior of the error term $\mathcal{E}_{q}(x)$ may only be one of two types, depending on whether $q=1$ or $q>1$. It is the higher dimensional case that is the most challenging, and we shall confine ourselves to the case $q\geq3$. To that end, for $q\geq3$ we consider the suitably normalized error term $\mathcal{E}_{q}(x)/x^{2q-1}$, and prove it has a limiting value distribution which is absolutely continuous with respect to the Lebesgue measure. We show that the defining density for this distribution, denoted by $\mathcal{P}_{q}(\alpha)$, can be extended to the whole complex plane $\mathbb{C}$ as an entire function of $\alpha$ and satisfies for any non-negative integer $j\geq0$ and any $\alpha\in\mathbb{R}$, $|\alpha|>\alpha_{q,j}$, the bound:
\begin{equation*}
\begin{split}
\big|\mathcal{P}^{(j)}_{q}(\alpha)\big|\leq\exp{\Big(-|\alpha|^{4-\beta/\log\log{|\alpha|}}\Big)}
\end{split}
\end{equation*}
where $\beta>0$ is an absolute constant. In addition, we prove that $\int_{-\infty}^{\infty}\alpha\mathcal{P}_{q}(\alpha)\textit{d}\alpha=0$ and $\int_{-\infty}^{\infty}\alpha^{3}\mathcal{P}_{q}(\alpha)\textit{d}\alpha<0$, and we give an explicit formula for the $j$-th integral moment of the density $\mathcal{P}_{q}(\alpha)$ for any integer $j\geq1$. Finally, we show that for $j=1,2$, the $j$-th integral moment of $\mathcal{E}_{q}(x)/x^{2q-1}$ is equal to $\int_{-\infty}^{\infty}\alpha^{j}\mathcal{P}_{q}(\alpha)\textit{d}\alpha$, and more generally:
\begin{equation*}
\lim\limits_{X\to\infty}\frac{1}{X}\int\limits_{ X}^{2X}\big|\mathcal{E}_{q}(x)/x^{2q-1}\big|^{\lambda}\textit{d}x=\int\limits_{-\infty}^{\infty}|\alpha|^{\lambda}\mathcal{P}_{q}(\alpha)\textit{d}\alpha
\end{equation*}
for any $0<\lambda\leq2$.
\end{abstract}
\maketitle
\tableofcontents
\section{Introduction, notation and statement of results}
\subsection{Introduction}
We investigate the fluctuations in the number of integral lattice points on the Heisenberg groups which lie inside a Cygan-Kor{\'a}nyi norm ball of expanding radius. This lattice point counting problem arises naturally in the context of the Heisenberg groups, and may be viewed as a non-commutative analogue of the classical lattice point counting problem for Euclidean balls. The latter problem, namely the Euclidean case, has been extensively studied and there is vast body of work dedicated to this subject. Of particular relevance to us is the planer case, the so called Gauss circle problem, which is the problem of determining how many integral lattice points there are in a circle of large radius $x>0$ centered at the origin. Denoting the unit ball with respect to the Euclidean norm by $\mathscr{O}=\big\{u\in\mathbb{R}^{2}:|u|_{2}\leq1\big\}$, and the error term by $E(x)=\big|\mathbb{Z}^{2}\cap x\mathscr{O}\big|-\pi x^{2}$, one seeks to determine the exponent $\omega>0$ defined by:
\begin{equation*}
\omega=\sup\big\{\alpha>0:\big|E(x)\big|\ll x^{2-\alpha}\big\}\,.
\end{equation*}
The determination of the exponent $\omega$ is a famous open problem. Gauss gave the first lower bound $\omega\geq1$. This lower bound has been improved many times over, the landmark results being: $\omega\geq\frac{4}{3}$ Vorono\"{i} \cite{voronoi1904fonction} and Sierpi\'{n}ski \cite{sierpinski1906pewnem} ; $\omega\geq\frac{67}{50}$ Van der Corput \cite{van1923neue} ; $\omega\geq\frac{580}{429}$ Kolesnik \cite{kolesnik1985method} ; $\omega\geq\frac{15}{11}$ Iwaniec and Mozzochi \cite{iwaniec1988divisor} ; $\omega\geq\frac{285}{208}$ Huxley \cite{huxley2003exponential} ; $\omega\geq\frac{1131}{824}$ Bourgain and Watt \cite{bourgain2017mean}. It is conjectured that $\omega=\frac{3}{2}$, which is supported by following moment estimates:
\begin{equation*}
\lim\limits_{X\to\infty}\frac{1}{X}\bigintssss\limits_{X}^{2X}\Big(E(x)/x^{1/2}\Big)^{k}\textit{d}x=(-1)^{k}c_{k}\quad;\qquad k=2,3,\ldots,9
\end{equation*}
where $c_{k}>0$ are positive constants. For $k=2$ this was proved by Cram\'{e}r \cite{cramer1922zwei}, $k=3,4$ by Tsang \cite{Tsang}, and for $k=5,\ldots,9$ by Wenguang \cite{Wenguang}. It is natural to ask: what can be said about the nature in which $\big|\mathbb{Z}^{2}\cap x\mathscr{O}\big|$ fluctuates around its expected value $\pi x^{2}$? Wintner \cite{Wintner} proved that the suitably normalized error term $E(x)/x^{1/2}$ has a limiting value distribution in the sense that, there is a measure $\textit{d}\nu$ such that for any interval $\mathcal{I}\subseteq\mathbb{R}$: 
\begin{equation*}
\lim_{X\to\infty}\frac{1}{X}\textit{meas}\big\{x\in[X,2X]:E(x)/x^{1/2}\in\mathcal{I}\big\}=\bigintssss\limits_{ \mathcal{I}}\textit{d}\nu(\alpha)\,.
\end{equation*}
Heath-Brown \cite{heath1992distribution} showed that the limiting value distribution
is absolutely continuous with respect to the Lebesgue measure, that is $\textit{d}\nu(\alpha)=f(\alpha)\textit{d}\alpha$, and that the density $f(\alpha)$ can be extended to the whole complex plane $\mathbb{C}$ as an entire function of $\alpha$; in particular, $f(\alpha)$ is supported on all of the real line. He also estimated the tails showing that $f(\alpha)$ decays roughly like $\exp{\big(-|\alpha|^{4-\epsilon}\big)}$, and in particular the is non-Gaussian. The underlying idea behind Heath-Brown's proof is based on the fact that $E(x)/x^{1/2}$ can be very well approximated in the
mean by a very short initial segment of its Vorono\"{i} series. In fact, the method developed in \cite{heath1992distribution} applies to a rather general type of functions $F(x)$ satisfying the following hypothesis.
\begin{Hyp H}
There exist continuous real-valued functions $a_{1}(x), a_{2}(x),\ldots$ of period $1$, and real numbers $\lambda_{1},\lambda_{2},\ldots$ which are linearly independent over $\mathbb{Q}$, such that:
\begin{equation*}
\lim_{M\to\infty}\limsup_{X\to\infty}\frac{1}{X}\bigintssss\limits_{X}^{2X}\textit{min}\bigg\{1,\Big|F(x)-\sum_{n\leq N}a_{n}(\lambda_{n}x)\Big|\bigg\}\textit{d}x=0
\end{equation*}
where the functions $a_{n}(x)$ satisfy:
\begin{equation*}
\begin{split}
&\textit{(i)}\quad\bigintssss\limits_{0}^{1}a_{n}(x)\textit{d}x=0\quad\textit{(ii)}\quad\sum_{n=1}^{\infty}\bigintssss\limits_{0}^{1}a^{2}_{n}(x)\textit{d}x<\infty\\
&\textit{(iii)}\quad\underset{x\in[0,1]}{\textit{max}}|a_{n}(x)|\ll n^{1-c}\quad\textit{(iv)}\quad\lim_{n\to\infty}n^{c}\bigintssss\limits_{0}^{1}a^{2}_{n}(x)\textit{d}x=\infty\quad;\quad\textit{for some constant }c>1\,.
\end{split}
\end{equation*}
\end{Hyp H}
\noindent
In the case of the Gauss circle problem, it can be shown that $E(x)/x^{1/2}$ satisfies Hypothesis (H) whith:
\begin{equation*}
a_{n}(x)=\frac{|\mu(n)|}{\pi}\sum_{k=1}^{\infty}\frac{r_{2}(nk^{2})}{(nk^{2})^{3/4}}\cos{\Big(2\pi kx-\frac{3\pi}{4}\Big)}\quad;\quad\lambda_{n}=\sqrt{n}
\end{equation*}
and one can take $c=\frac{5}{3}$, for example. Here $\mu(\cdot)$ is the M{\"o}bius function and $r_{2}(\cdot)$ is the counting function for the number of representations of an integer as a sum of two squares. There are other well known error terms in analytic number theory, as well as error terms arising from counting problem in other areas, which are known to satisfy Hypothesis (H). We refer the reader to \cite{bleher2} for more details and references.\\\\  
We now proceed to introduce the lattice point counting problem on the Heisenberg groups. We mentioned at the beginning of the introduction that this problem may be viewed as a non-commutative analogue of the lattice point counting problem for Euclidean balls. More specifically, we shall replace the usual vector addition in $\mathbb{R}^{2q+1}$ by a non-commutative group law and replace the Euclidean dilation by Heisenberg dilation which will be compatible with our new group structure. The analogue of the Euclidean norm will be the Cygan-Kor{\'a}nyi norm, which is the \textit{canonical} norm on our new group. The lattice point counting problem we shall then consider is that of estimating the number of integral lattice points that are contained in a Heisenberg dilated Cygan-Kor{\'a}nyi norm ball of large radius. Our goal will be to prove that the error term, properly normalized, has a limiting value distribution. As we shall see, the various analytic results we shall prove for this limiting distribution correspond precisely to the known results for the limiting distribution in the case of the Gauss circle problem, and in fact, from the perspective of these results the two distributions are indistinguishable. Nevertheless, there are some subtle yet profound differences between the two, which is due to the fact the dilation process we are using is no longer the Euclidean one. Also, Hypothesis (H) fails to hold in our case for the exact same reason.    
\subsection{The lattice point counting problem on the Heisenberg group $\mathbb{H}_{q}$}
Let $q\geq1$ be an integer. We endow the space $\mathbb{R}^{2q+1}\equiv\mathbb{R}^{2q}\times\mathbb{R}$ with the following homogeneous structure:
\begin{equation*}
\begin{split}
&\big(v,w\big)\ast\big(v',w'\big)=\big(v+v',w+w'+2\langle\text{J}v,v'\rangle\big)\quad;\quad\text{J}=\begin{pmatrix}0&I_{q}\\-I_{q}& 0\end{pmatrix}\\
&\delta_{x}\big(v,w\big)=\big(xv,x^{2}w\big)\qquad\qquad\quad\qquad\qquad\quad\quad;\quad x\in\mathbb{R}_{+}
\end{split}
\end{equation*}
where $\langle\,,\, \rangle$ stands for standard inner product on $\mathbb{R}^{2q}$. The identity element is $(0,0)$ and the inverse is given by $(v,w)^{-1}=\big(-v,-w)$. We shall write $\mathbb{H}_{q}=\big(\mathbb{R}^{2q+1},\ast\big)$ and refer to this group as the $q$-th Heisenberg group. It is a 2-step nilpotent group with a 1-dimensional center, and it is easily checked that the Heisenberg dilations $\delta_{x}$ satisfy:
\begin{equation*}
\begin{split}
&\delta_{x}\big((v,w)\ast(v',w')\big)=\delta_{x}\big(v,w\big)\ast\delta_{x}\big(v',w'\big)\\
&\delta_{x'}\big(\delta_{x}(v,w)\big)=\delta_{x'x}\big(v,w\big)\,.
\end{split}
\end{equation*}
Hence $\big\{  \delta_{x}:x\in\mathbb{R}_{+}\big\}$ forms a group of automorphisms of $\mathbb{H}_{q}$, called the dilation group. The group $\mathbb{H}_{q}$ carries a canonical norm known as the Cygan-Kor{\'a}nyi norm:
\begin{equation*}
\mathcal{N}(v,w)=\Big(|v|_{2}^{4}+w^{2}\Big)^{1/4}\quad;\quad\quad|\cdot|_{2}=\text{Euclidean norm on } \mathbb{R}^{2q}\,.
\end{equation*}
It is homogeneous with respect to the action of the dilation group $\mathcal{N}\circ\delta_{x}=x\mathcal{N}$, and is sub-additive in the sense that:
\begin{equation*}
\mathcal{N}\big((v,w)\ast(v',w')\big)\leq \mathcal{N}(v,w)+\mathcal{N}(v',w')\,.
\end{equation*}
Consequently, the Cygan-Kor{\'a}nyi norm defines a left invariant homogeneous distance on $\mathbb{H}_{q}$. This norm has been extensively studied by Cygan \cite{cygan1978wiener}, \cite{cygan1981subadditivity} and Kor{\'a}nyi \cite{koranyi1985geometric}. It appears in the expression defining the fundamental
solution of a natural sublaplacian on $\mathbb{H}_{q}$ and in other natural kernels, see \cite{stein2016harmonic} and \cite{cowling2010unitary}. It is the analogue of the Euclidean norm on the abelian homogeneous group $\big(\mathbb{R}^{2q+1},+\big)$, and it is for this canonical norm that we shall consider the lattice point counting problem on the $q$-th Heisenberg group. Let:  
\begin{equation*}
\mathcal{B}=\big\{(v,w)\in\mathbb{R}^{2q}\times\mathbb{R}:\mathcal{N}(v,w)\leq1\big\}
\end{equation*}
be the Cygan-Kor{\'a}nyi norm ball in $\mathbb{H}_{q}$ of unit radius. We consider the problem of estimating the number of points in the integral lattice $\mathbb{Z}^{2q+1}$ which lie inside the dilated ball $\delta_{x}\mathcal{B}$ of radius $x>0$:
\begin{equation*}
\big|\mathbb{Z}^{2q+1}\cap\delta_{x}\mathcal{B}\big|=\big|\big\{(z,z')\in\mathbb{Z}^{2q}\times\mathbb{Z}:\mathcal{N}(z,z')\leq x\big\}\big|\,.
\end{equation*}
We expect that the number of such points should be well approximated by $\textit{vol}\big(\mathcal{B}\big)x^{2q+2}$, where $\textit{vol}(\cdot)$ is the $(2q+1)$-dimensional volume. We shall therefore consider the error term that arises from this approximation.
\begin{mydef}
Let $q\geq1$ be an integer. For $x>0$ define:
\begin{align*}
\mathcal{E}_{q}(x)=\big|\mathbb{Z}^{2q+1}\cap\delta_{x}\mathcal{B}\big|-\textit{vol}\big(\mathcal{B}\big)x^{2q+2}
\end{align*}
and set $\kappa_{q}=\sup\big\{\alpha>0:\big|\mathcal{E}_{q}(x)\big|\ll x^{2q+2-\alpha}\big\}$.
\end{mydef}
\noindent
Let us remark that the Gaussian curvature of the enclosing surface $\partial\mathcal{B}$ vanishes at both points of intersection of $\mathcal{B}$ with the $w$-axis, namely the north and south poles. In fact, all of the $2q$ principal curvatures vanish at these two points. We now proceed to describe the current state of knowledge regarding the error term $\mathcal{E}_{q}(x)$.
\subsection{Upper bound, mean-square and $\Omega$-results for $\mathcal{E}_{q}(x)$: A short survey}
Garg, Nevo and Taylor \cite{garg2015lattice} have obtained the lower bound $\kappa_{q}\geq2$ for any value of $q\geq1$. In our previous work \cite{gath2017best}, this lower bound was proved to be sharp in the case of $q=1$, namely $\kappa_{1}=2$, and so the problem is resolved for $\mathbb{H}_{1}$. In the higher dimensional case the behavior of the error term is of an entirely different nature, and is closely related both in shape and form to the error term in the Gauss circle problem as soon as $q\geq3$, while $q=2$ marks the transition point. The case $q=2$ will not be dealt with in the current paper as it requires a different approach, and the corresponding results will appear in a separate paper.\\\\
From now on we assume that $q\geq3$. In \cite{gath2019analogue} we succeeded in establishing the upper bound $|\mathcal{E}_{q}(x)|\ll x^{2q-2/3}$, which translates to the lower bound $\kappa_{q}\geq\frac{8}{3}=2.666...$. This improves upon the lower bound obtained by Garg, Nevo and Taylor. We also have the following $\Omega$-result \cite{gath2019analogue}:
\begin{equation}\label{eq:1.2}
\mathcal{E}_{q}(x)=\Omega\Big(x^{2q-1}\big(\log{x}\big)^{1/4}W(x)\Big)\qquad;\qquad W(x)=\big(\log{\log{x}}\big)^{1/8}\,.
\end{equation}
Consequently, $\kappa_{q}\leq3$. In regards to what should be the true order of magnitude of $\mathcal{E}_{q}(x)$ we conjecture that $|\mathcal{E}_{q}(x)|\ll_{\epsilon}x^{2q-1+\epsilon}$, or in other words  $\kappa_{q}=3$.
This conjecture is supported by the second moment estimate established in \cite{gath2019analogue}. For $q\equiv0\,(2)$ we have:
\begin{equation}\label{eq:1.3}
\begin{split}
\frac{1}{X}\bigintssss\limits_{ X}^{2X}\Big|\mathcal{E}_{q}(x)/x^{2q-1}\Big|^{2}\textit{d}x=\frac{1}{2}\Bigg(\frac{\pi^{q-1}}{2\Gamma(q)}\Bigg)^{2}
\Bigg\{\underset{\,\,(d,2m)=1}{\sum_{d,m=1}^{\infty}}\frac{r^{2}_{2}\big(m,d;q\big)}{m^{3/2}d^{2q-3}}+2^{2q}\underset{d\equiv0(4)}{\underset{\,\,(d,m)=1}{\sum_{d,m=1}^{\infty}}}\frac{r^{2}_{2}\big(m,d;q\big)}{m^{3/2}d^{2q-3}}\Bigg\}+O\Big( X^{-1}\log^{2}{X}\Big)
\end{split}
\end{equation}
and for $q\equiv1\,(2)$:
\begin{equation}\label{eq:1.4}
\begin{split}
\frac{1}{X}\bigintssss\limits_{X}^{2X}\Big|\mathcal{E}_{q}(x)/x^{2q-1}\Big|^{2}\textit{d}x=\frac{1}{2}\Bigg(\frac{\pi^{q-1}}{2\Gamma(q)}\Bigg)^{2}\Bigg\{\underset{\,\,(d,2m)=1}{\sum_{d,m=1}^{\infty}}\frac{r^{2}_{2}\big(m,d;q\big)}{m^{3/2}d^{2q-3}}+2^{2q}\underset{d\equiv0(4)}{\underset{\,\,(d,m)=1}{\sum_{d,m=1}^{\infty}}}\frac{r^{2}_{2,\chi}\big(m,d;q\big)}{m^{3/2}d^{2q-3}}\Bigg\}+O\Big( X^{-1}\log^{2}{X}\Big)\,.
\end{split}
\end{equation}
The arithmetical functions $r_{2}\big(m,d;q\big)$ and $r_{2,\chi}\big(m,d;q\big)$ will feature prominently throughout the paper, and so it is appropriate at this point to introduce them.
\begin{mydef}
Let $q\geq3$ be an integer. For integers $m,d\geq1$ define:
\begin{equation*}
\begin{split}
&r_{2}\big(m,d;q\big)=\underset{b\equiv0(d)}{\sum_{a^{2}+b^{2}=m}}\bigg(\frac{|a|}{\sqrt{m}}\bigg)^{q-1}\quad;\quad r_{2,\chi}\big(m,d;q\big)=\underset{b\equiv0(d)}{\sum_{a^{2}+b^{2}=m}}\chi\big(|a|\big)\bigg(\frac{|a|}{\sqrt{m}}\bigg)^{q-1}
\end{split}
\end{equation*}
where the representation runs over $a,b\in\mathbb{Z}$, and $\chi$ denotes the non-trivial Dirichlet character (mod 4).
\end{mydef}
\noindent
We now proceed to present the main results of this paper. Our goal is to investigate the nature in which $\big|\mathbb{Z}^{2q+1}\cap\delta_{x}\mathcal{B}\big|$ fluctuates around its expected value $\textit{vol}\big(\mathcal{B}\big)x^{2q+2}$. To that end, let $\textit{d}\nu_{X,q}(\alpha)$ be the distribution of the random variable $x\mapsto\mathcal{E}_{q}(x)/x^{2q-1}$, $x$ uniformly distributed on a segment $[X,2X]$. We shall establish the weak convergence of the distributions  $\textit{d}\nu_{X,q}(\alpha)$, as $X\to\infty$, to an absolutely continuous distribution $\textit{d}\nu_{q}(\alpha)=\mathcal{P}_{q}(\alpha)\textit{d}\alpha$, and obtain decay estimates for its defining density $\mathcal{P}_{q}(\alpha)$.
\subsection{Statement of the main results}
\begin{thm}
Let $q\geq3$ be an integer. Then the normalized error term $\mathcal{E}_{q}(x)/x^{2q-1}$ has a limiting value distribution in the sense that, there exists a probability density $\mathcal{P}_{q}(\alpha)$ such that for any bounded (piecewise)-continuous function $\mathcal{F}$:
\begin{equation}\label{eq:2.1}
\lim\limits_{X\to\infty}\frac{1}{X}\bigintssss\limits_{ X}^{2X}\mathcal{F}\Big(\mathcal{E}_{q}(x)/x^{2q-1}\Big)\textit{d}x=\bigintssss\limits_{-\infty}^{\infty}\mathcal{F}(\alpha)\mathcal{P}_{q}(\alpha)\textit{d}\alpha\,.
\end{equation}
The density $\mathcal{P}_{q}(\alpha)$ can be extended to the whole complex plane $\mathbb{C}$ as an entire function of $\alpha$, and in particular is supported on all of the real line. It satisfies for any non-negative integer $j\geq0$ and any $\alpha\in\mathbb{R}$, $|\alpha|>\alpha_{q,j}$, the bound:
\begin{equation}\label{eq:2.2}
\begin{split}
\big|\mathcal{P}^{(j)}_{q}(\alpha)\big|\leq\exp{\bigg(-|\alpha|^{4-\beta/\log\log{|\alpha|}}\bigg)}
\end{split}
\end{equation}
where $\beta>0$ is an absolute constant.
\end{thm}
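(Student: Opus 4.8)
The plan is to adapt Heath-Brown's treatment \cite{heath1992distribution} of the Gauss circle problem to the present non-commutative setting, working throughout with characteristic functions and with the equidistribution of the frequencies occurring in a Vorono\"{i}-type expansion of the normalized error term. Writing $\big|\mathbb{Z}^{2q+1}\cap\delta_x\mathcal{B}\big|=\sum_{|z'|\le x^2}N_{2q}\big((x^4-z'^2)^{1/4}\big)$, where $N_{2q}(\rho)$ counts the integer points of the $2q$-dimensional ball of radius $\rho$, I would first insert the Hardy--Vorono\"{i} expansion for $N_{2q}$ and carry out Poisson summation together with stationary phase in the central variable $z'$, as in \cite{gath2019analogue}. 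This should yield an approximation lemma: there are phases $\phi_{m,d}$, frequencies $\lambda_{m,d}=\sqrt{m}/d$, and coefficients $c_{m,d}\asymp r_2(m,d;q)\,m^{-3/4}d^{-(q-3/2)}$ (with $r_{2,\chi}$ on the relevant residue classes, cf. \eqref{eq:1.3}--\eqref{eq:1.4}) such that $\mathcal{E}_q(x)/x^{2q-1}$ is approximated in the mean by the finite sum $S_N(x)=\sum_{(m,d)\le N}c_{m,d}\cos\!\big(2\pi\lambda_{m,d}x+\phi_{m,d}\big)$, the truncation error tending to $0$ as $N\to\infty$. The admissibility of this truncation rests on $\sum_{m,d}c_{m,d}^2<\infty$, which for $q\ge3$ is exactly the second moment \eqref{eq:1.3}--\eqref{eq:1.4}; here the summation over the dual variable $d$ converges precisely because $q\ge3$.

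Next I would pass to the limiting distribution by L\'evy's continuity theorem. Grouping the frequencies according to the squarefree part, write $m=n\ell^2$ with $n$ squarefree, so that $\lambda_{m,d}=(\ell/d)\sqrt{n}$ is a rational multiple of $\sqrt{n}$. Since $\{\sqrt{n}:n\ \text{squarefree}\}$ is linearly independent over $\mathbb{Q}$, Weyl's criterion gives that $\big(\sqrt{n}\,x \bmod 1\big)_n$ equidistributes, whence for each fixed $N$ the distribution of $S_N(x)$, with $x$ uniform on $[X,2X]$, converges as $X\to\infty$ to that of $\sum_{(m,d)\le N}c_{m,d}\cos\!\big(2\pi(\ell/d)Y_n+\phi_{m,d}\big)$ with $\{Y_n\}$ independent and uniform. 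The corresponding characteristic function factorizes over squarefree classes, $\Phi_q^{(N)}(\xi)=\prod_n\Psi_{n,N}(\xi)$, each factor being the transform of the value distribution of a one-variable trigonometric polynomial; letting $N\to\infty$ yields the limiting characteristic function $\Phi_q(\xi)=\prod_n\Psi_n(\xi)$, which is continuous at $\xi=0$, giving \eqref{eq:2.1}. It is here that the contrast with the Euclidean case appears, and that Hypothesis (H) fails: because the dilation scales the central coordinate by $x^2$, the intra-class frequencies $(\ell/d)\sqrt{n}$ are non-integer rational multiples of $\sqrt{n}$, so the class functions are not of period $1$; the value distributions of these (almost periodic) class functions are nonetheless well defined and the factorization survives.

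For absolute continuity, the entire extension, and the bound \eqref{eq:2.2}, I would exploit super-exponential decay of $\Phi_q$. Each factor $\Psi_n$ is entire and bounded by $1$ on the real axis, so the task reduces to establishing a bound of the form $\log|\Phi_q(\xi)|\le -c\sum_{m,d}\min\{(c_{m,d}\xi)^2,1\}$; combined with the dyadic counting estimate $\#\{(m,d):c_{m,d}>1/|\xi|\}\asymp|\xi|^{4/3}$ (convergent in $d$ for $q\ge3$) this gives $|\Phi_q(\xi)|\le\exp(-c|\xi|^{4/3})$. Since $4/3>1$, the inversion integral $\mathcal{P}_q(\alpha)=\frac{1}{2\pi}\int_{-\infty}^{\infty}\Phi_q(\xi)e^{-i\alpha\xi}\,d\xi$ converges for every complex $\alpha$, so $\mathcal{P}_q$ extends to an entire function and differentiation under the integral sign produces all the $\mathcal{P}^{(j)}_q$. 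For the tail I would either shift the contour to $\Im\xi=-T\,\mathrm{sgn}(\alpha)$, balancing the growth $|\Phi_q(\sigma-iT)|\le\exp(O(T^{4/3}))$ against the gain $e^{-|\alpha|T}$ and optimizing at $T\asymp|\alpha|^{3}$, or equivalently bound the exponential moment of the limiting law, $\prod_n\mathbb{E}[\exp(sG_n(Y_n))]$ --- whose elementary building block is $\mathbb{E}[\exp(s\cos 2\pi U)]=I_0(s)$ --- by $\exp(O(s^{4/3}))$ and invoke Markov's inequality. Either route produces a tail of order $\exp(-c|\alpha|^{4})$; the refinement to the exponent $4-\beta/\log\log|\alpha|$ comes from inserting the true order of the representation function $r_2(m,d;q)\le r_2(m)\ll_\varepsilon m^{\varepsilon}$, whose extreme values of size $\exp(c\log m/\log\log m)$ mildly inflate the counts, exactly as in \cite{heath1992distribution}; the polynomial factor $\xi^{j}$ is carried through the same optimization to bound $\mathcal{P}^{(j)}_q$.

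The main obstacle is the quantitative decay estimate for $\Phi_q$, equivalently the exponential-moment bound, with the sharp rate $|\xi|^{4/3}$ and the correct $\log\log$ refinement. This demands uniform control of the distribution of the coefficient sizes, i.e. of the counting function $\#\{(m,d):r_2(m,d;q)\ge m^{3/4}d^{q-3/2}/|\xi|\}$, and the handling of the rational dependences inside each squarefree class, where $\Psi_n$ is not a single Bessel factor but the transform of a dependent trigonometric sum. Securing a matching lower bound for the decay (so that the tail is genuinely of order $4$ and not larger) and propagating every estimate uniformly in the truncation parameter $N$ is where the real difficulty lies.
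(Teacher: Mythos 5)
Your proposal follows essentially the same route as the paper: a mean-square Vorono\"{i}-type approximation of $\mathcal{E}_{q}(x)/x^{2q-1}$ by an almost periodic trigonometric series, factorization of the limiting characteristic function over squarefree frequency classes via the $\mathbb{Q}$-linear independence of $\{\sqrt{n}\}$, a decay bound of order $\exp\big(-|\xi|^{4/3+o(1)}\big)$ obtained by counting the coefficients exceeding $1/|\xi|$ (with the $\log\log$ refinement coming from the size of $r_{2}$), and Fourier inversion with a contour shift at height $|\tau|\asymp|\alpha|^{3-o(1)}$ yielding the tail $\exp\big(-|\alpha|^{4-\beta/\log\log{|\alpha|}}\big)$ together with the entire extension. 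The only cosmetic differences are that the paper keeps the oscillations in the variable $x^{2}$ and handles them through Fresnel-type mean values, and proves the class factorization by an inductive Fourier-series argument (its Lemma 5 and Proposition 4) rather than by citing Weyl's criterion and L\'evy's continuity theorem, but these are the same underlying ideas.
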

\noindent
Theorem 1 therefore establishes the weak convergence of the distributions  $\textit{d}\nu_{X,q}(\alpha)$ to an absolutely continuous distribution $\textit{d}\nu_{q}(\alpha)=\mathcal{P}_{q}(\alpha)\textit{d}\alpha$ as $X\to\infty$. Our next result gives an explicit formula for all the integral moments of the density $\mathcal{P}_{q}(\alpha)$.
\begin{thm}
Let $q\geq3$ be an integer. The $j$-th integral moment of $\mathcal{P}_{q}(\alpha)$ is given by:
\begin{equation}\label{eq:2.3}
\begin{split}
&\bigintssss\limits_{-\infty}^{\infty}\alpha^{j}\mathcal{P}_{q}(\alpha)\textit{d}\alpha=\sum_{s=1}^{j}\underset{\,\,\ell_{1},\,\ldots\,,\ell_{s}\geq1}{\sum_{\ell_{1}+\cdots+\ell_{s}=j}}\,\,\frac{j!}{\ell_{1}!\cdots\ell_{s}!}\underset{\,\,\,m_{s}>\cdots>m_{1}}{\sum_{m_{1},\,\ldots\,,m_{s}=1}^{\infty}}\prod_{i=1}^{s}\mathcal{Q}_{q}(m_{i},\ell_{i})
\end{split}
\end{equation}
where the series on the RHS of \eqref{eq:2.3} converges absolutely. For integers $m,\ell\geq1$, the term $\mathcal{Q}_{q}(m,\ell)$ is given by:
\begin{equation}\label{eq:2.4}
\mathcal{Q}_{q}(m,\ell)=(-1)^{\ell}\Bigg(\frac{\pi^{q-1}}{4\Gamma(q)}\Bigg)^{\ell}\frac{\mu^{2}(m)}{m^{3\ell/4}}\sum_{a\,(8)}\cos{\Big(\frac{a\pi}{4}\Big)}\underset{\overset{\ell}{\underset{i=1}{\sum}}\textit{e}_{i}\equiv a\,(8)}{\sum_{\textit{e}_{1},\ldots,\textit{e}_{\ell}=\pm1}}\,\,\,\underset{\overset{\ell}{\underset{i=1}{\sum}}\textit{e}_{i}\epsilon_{q}(d_{i})\frac{k_{i}}{d_{i}}=0}{\sum_{d_{1},k_{1},\ldots,d_{\ell},k_{\ell}=1}^{\infty}}\,\,\prod_{i=1}^{\ell}\frac{\mathfrak{r}\big(mk^{2}_{i},d_{i}\,;q\big)}{d_{i}^{q-3/2}k_{i}^{3/2}}
\end{equation}
where for $q\equiv0\,(2)$ we define $\epsilon_{q}(d)=1$, and:
\begin{equation*}
\mathfrak{r}\big(mk^{2},d\,;q\big)=\left\{
        \begin{array}{ll}
            0& ;\,(k,d)\neq1\text{ or } d\equiv2\,(4)\\\\
            r_{2}\big(mk^{2},d;q\big)& ;\, (d,2)=1\\\\
            (-1)^{\frac{q}{2}}2^{q}r_{2}\big(mk^{2},d;q\big)& ;\, d\equiv0\,(4)
        \end{array}
    \right.
\end{equation*}
while for $q\equiv1\,(2)$ we define $\epsilon_{q}(d)=1$ if $(d,2)=1$ and $\epsilon_{q}(d)=-1$ otherwise, and:
\begin{equation*}
\mathfrak{r}\big(mk^{2},d\,;q\big)=\left\{
        \begin{array}{ll}
            0& ;\,(k,d)\neq1\text{ or } d\equiv2\,(4)\\\\
            \chi(d)r_{2}\big(mk^{2},d;q\big)& ;\, (d,2)=1\\\\
            (-1)^{\frac{q-1}{2}}2^{q}r_{2,\chi}\big(mk^{2},d;q\big)& ;\, d\equiv0\,(4)\,.
        \end{array}
    \right.
\end{equation*}
Note that for $\ell=1$ the sum in \eqref{eq:2.4} is void, so by definition $\mathcal{Q}_{q}(m,1)=0$. In addition, the first moment of $\mathcal{P}_{q}(\alpha)$ vanishes while its third moment is strictly negative:
\begin{equation}\label{eq:2.5}
\begin{split}
\bigintssss\limits_{-\infty}^{\infty}\alpha\mathcal{P}_{q}(\alpha)\textit{d}\alpha=0\quad;\quad\bigintssss\limits_{-\infty}^{\infty}\alpha^{3}\mathcal{P}_{q}(\alpha)\textit{d}\alpha<0\,.
\end{split}
\end{equation}
\end{thm}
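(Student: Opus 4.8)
The plan is to establish Theorem 2 by first deriving the moment formula \eqref{eq:2.3}--\eqref{eq:2.4}, and then extracting the two special consequences in \eqref{eq:2.5} as corollaries. The starting point is the almost-periodic structure underlying $\mathcal{E}_q(x)/x^{2q-1}$. Although the introduction notes that Hypothesis (H) fails here (because the Heisenberg dilation scales $v$ and $w$ differently), one still expects a Voronoï-type expansion in which $\mathcal{E}_q(x)/x^{2q-1}$ is approximated in mean by a finite sum of oscillatory terms indexed by $(m,d,k)$, with coefficients built from $\mathfrak{r}(mk^2,d;q)$ and frequencies of the form $\epsilon_q(d)\tfrac{k}{d}\sqrt{m}$. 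First I would make this approximation precise: let $S_N(x)$ be the truncation to $m,d,k \le N$, and record that $\tfrac1X\int_X^{2X}|\mathcal{E}_q(x)/x^{2q-1} - S_N(x)|\,dx \to 0$ as $N\to\infty$, uniformly enough to pass to moments. This is where the second-moment estimates \eqref{eq:1.3}--\eqref{eq:1.4} do the heavy lifting, since they control the $L^2$-mass of the tail and guarantee the relevant series converge.

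Next I would compute the $j$-th moment of the limiting distribution by expanding $S_N(x)^j$ and integrating term by term over $x\in[X,2X]$, then letting $X\to\infty$ followed by $N\to\infty$. The key mechanism is that each oscillatory factor contributes a phase $\cos$ or complex exponential whose frequency is a linear combination $\sum_i \epsilon_q(d_i)\tfrac{k_i}{d_i}\sqrt{m_i}$; upon averaging over $x$, only those tuples for which the \emph{total} frequency vanishes survive, everything else averaging to zero by the usual $\tfrac1X\int_X^{2X} e(\xi x)\,dx \to 0$ for $\xi\neq 0$. The combinatorics of partitioning the $j$ factors into $s$ groups sharing a common $\sqrt{m}$-value (since distinct square-free $m$'s are $\mathbb{Q}$-linearly independent after taking square roots, the frequencies can only cancel within a fixed $m$) produces exactly the multinomial sum $\sum_{s}\sum_{\ell_1+\cdots+\ell_s=j}\tfrac{j!}{\ell_1!\cdots\ell_s!}\sum_{m_s>\cdots>m_1}\prod_i \mathcal{Q}_q(m_i,\ell_i)$, with the inner diagonal condition $\sum_{i=1}^{\ell}\epsilon_q(d_i)\tfrac{k_i}{d_i}=0$ in \eqref{eq:2.4} encoding the vanishing of the net frequency within each block. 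The $\cos(a\pi/4)$ factor and the sum over $\sum e_i \equiv a\,(8)$ arise from collecting the phase shifts $-\tfrac{3\pi}{4}$ (or their Heisenberg analogue) across the $\ell$ chosen signs $e_i=\pm1$. Absolute convergence of the resulting series must be verified using the divisor-type bounds on $\mathfrak{r}(mk^2,d;q)$ together with the decay $d^{-(q-3/2)}k^{-3/2}$, which is where the hypothesis $q\ge 3$ enters decisively.

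For the two assertions in \eqref{eq:2.5}, the first moment follows immediately once I observe that $\mathcal{Q}_q(m,1)=0$ for every $m$: the case $j=1$ of \eqref{eq:2.3} has only the single term $s=1$, $\ell_1=1$, and since the sum defining $\mathcal{Q}_q(m,1)$ over the $e_i$ is void (there is no nontrivial diagonal relation from a single index with $e_1=\pm1$ and the constraint $e_1\epsilon_q(d_1)k_1/d_1=0$ forcing an empty range), the whole first moment is zero. The strict negativity of the third moment is the genuinely delicate point and I expect it to be the main obstacle. For $j=3$ the formula collapses, after using $\mathcal{Q}_q(m,1)=0$, to the single partition type $s=1,\ell_1=3$, so $\int \alpha^3 \mathcal{P}_q \,d\alpha = \sum_{m=1}^{\infty}\mathcal{Q}_q(m,3)$. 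The factor $(-1)^3=-1$ supplies the overall negative sign, but one must show the remaining sum is \emph{strictly positive}; this requires proving that the arithmetic sum $\sum_{a(8)}\cos(a\pi/4)\sum_{\sum e_i\equiv a(8)}\sum_{\text{diag}}\prod \mathfrak{r}/\ldots$ does not vanish and has the right sign. The difficulty is that $\cos(a\pi/4)$ takes both signs as $a$ ranges mod $8$, so positivity is not termwise; I would need to regroup the diagonal solutions $(d_i,k_i)$ according to the induced residue $a=\sum e_i \pmod 8$, exploit symmetries of the sign pattern $(e_1,e_2,e_3)$ under global negation (which maps $a\mapsto -a$ and preserves $\cos(a\pi/4)$), and reduce the claim to the positivity of a manifestly nonnegative quadratic-type expression in the coefficients $\mathfrak{r}(mk^2,d;q)$, with at least one $m$ (e.g.\ a small square-free value admitting representations) contributing a strictly positive amount. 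Verifying this nonvanishing, rather than merely nonnegativity, is the crux and will likely occupy the bulk of the proof.
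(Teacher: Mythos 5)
Your overall route coincides with the paper's: approximate $\mathcal{E}_q(x)/x^{2q-1}$ in mean square by the truncated oscillating series (the paper's Theorem 4, which rests on the Vorono\"{i}-type expansion of the earlier work rather than on the second-moment asymptotics \eqref{eq:1.3}--\eqref{eq:1.4}), expand the $j$-th power multinomially, discard all non-resonant frequency combinations using the $\mathbb{Q}$-linear independence of $\sqrt{m}$ over square-free $m$, and identify the block contributions $\mathcal{Q}_{q}(m_{i},\ell_{i})$; the vanishing of the first moment via $\mathcal{Q}_{q}(m,1)=0$ and the reduction $\int\alpha^{3}\mathcal{P}_{q}(\alpha)\,\textit{d}\alpha=\sum_{m}\mathcal{Q}_{q}(m,3)$ are also exactly as in the paper. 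One step you pass over with ``uniformly enough to pass to moments'' does require an argument: $L^{1}$ or $L^{2}$ closeness of the truncations does not let you equate $\lim_{M}\lim_{X}$ of $j$-th moments of the truncated sums with $\int\alpha^{j}\mathcal{P}_{q}(\alpha)\,\textit{d}\alpha$ for $j>2$; the paper does this with the cutoff $\mathcal{F}_{n}(y)=y^{j}\psi(y/n)$ together with the existence of the $2j$-th moment limit $\textit{L}(2j)$ of the truncated sums (eqs. \eqref{eq:6.26}--\eqref{eq:6.30}).

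The genuine gap is the strict negativity of the third moment, which you correctly flag as the crux but for which your plan would not work as described. Your diagnosis of the obstruction is off: for $q\equiv0\,(2)$ one has $\epsilon_{q}\equiv1$, so the resonance condition $\sum_{i}\textit{e}_{i}k_{i}/d_{i}=0$ with $k_{i}/d_{i}>0$ forces mixed signs; hence only $a\equiv\pm1\,(8)$ survive and $\cos(\pm\pi/4)>0$. The sign problem does not come from $\cos(a\pi/4)$ in this case, but from the coefficients themselves: $\mathfrak{r}\big(mk_{i}^{2},d_{i};q\big)=(-1)^{q/2}2^{q}r_{2}\big(mk_{i}^{2},d_{i};q\big)$ is \emph{negative} for $d_{i}\equiv0\,(4)$ when $q\equiv2\,(4)$ (for odd $q$ there are in addition $\chi(d)$-twists, and since $\epsilon_{q}$ takes both signs the patterns $a\equiv\pm3\,(8)$ with negative cosine genuinely contribute). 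Your proposed remedy --- symmetry under global negation of $(\textit{e}_{1},\textit{e}_{2},\textit{e}_{3})$ --- is vacuous here: it merely pairs $a$ with $-a$, and $\cos$ is already even, so nothing cancels; nor can the quantity be rewritten as a ``manifestly nonnegative quadratic-type expression,'' it being cubic in coefficients of indefinite sign. What the paper actually does (Lemma 7, eqs. \eqref{eq:5.56}--\eqref{eq:5.65}) is a quantitative domination argument: it factors out the $2$-adic part of the common divisor $\ell$, uses the monotonicity $r_{2}\big(n,hs;q\big)\leq r_{2}\big(n,s;q\big)$ to bound the contributions with some $d_{i}\equiv0\,(4)$ or $d_{i}\equiv2\,(4)$ by small multiples (of size $\eta_{q}=2^{-(3q-10)}$ and $2^{-(2q-9/2)}$) of the all-odd-$d_{i}$ term, and then bounds that term from below by $2^{-(3q+8)/2}\sum_{k}r_{2}^{3}\big(mk^{2}\big)k^{-9/2}>0$ via $r_{2}\big(mk^{2},1;q\big)\geq2^{-(q+1)/2}r_{2}\big(mk^{2}\big)$. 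This explicit comparison, not a symmetry, is what yields $\sum_{m}\mathcal{Q}_{q}(m,3)<0$; without it (or an equivalent substitute) your proof of \eqref{eq:2.5} is incomplete.
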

\noindent
From Theorem 2 it follows that $\mathcal{P}_{q}(\alpha)$ is asymmetric, and is skewed towards negative values of $\alpha$. It can also be shown that the maximum of $\mathcal{P}_{q}(\alpha)$ occurs at positive $\alpha$. Our third and final result is:
\begin{thm}
Let $q\geq3$ be an integer. Then for $0<\lambda\leq2$ we have:
\begin{equation}\label{eq:2.6}
\lim\limits_{X\to\infty}\frac{1}{X}\bigintssss\limits_{ X}^{2X}\big|\mathcal{E}_{q}(x)/x^{2q-1}\big|^{\lambda}\textit{d}x=\bigintssss\limits_{-\infty}^{\infty}|\alpha|^{\lambda}\mathcal{P}_{q}(\alpha)\textit{d}\alpha
\end{equation}
and, in the case where $\lambda=1$:
\begin{equation}\label{eq:2.7}
\lim\limits_{X\to\infty}\frac{1}{X}\bigintssss\limits_{ X}^{2X}\mathcal{E}_{q}(x)/x^{2q-1}\textit{d}x=\bigintssss\limits_{-\infty}^{\infty}\alpha\mathcal{P}_{q}(\alpha)\textit{d}\alpha\,.
\end{equation}
\end{thm}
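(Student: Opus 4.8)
The plan is to prove Theorem 3 by combining the weak convergence established in Theorem 1 with a uniform integrability argument, which is precisely what allows one to upgrade convergence in distribution to convergence of the $\lambda$-th absolute moments for $0<\lambda\le 2$. Since Theorem 1 already gives the weak convergence of $d\nu_{X,q}$ to $d\nu_q=\mathcal{P}_q(\alpha)d\alpha$, the continuous mapping theorem immediately yields \eqref{eq:2.6} for any bounded continuous test function; the content of Theorem 3 is that we may take the \emph{unbounded} test function $\alpha\mapsto|\alpha|^{\lambda}$. First I would introduce the truncated test functions $\mathcal{F}_{R}(\alpha)=\min\{|\alpha|^{\lambda},R\}$, which are bounded and continuous, so that Theorem 1 applies directly to give
\begin{equation*}
\lim_{X\to\infty}\frac{1}{X}\int_{X}^{2X}\mathcal{F}_R\big(\mathcal{E}_q(x)/x^{2q-1}\big)\,dx=\int_{-\infty}^{\infty}\mathcal{F}_R(\alpha)\mathcal{P}_q(\alpha)\,d\alpha.
\end{equation*}
Letting $R\to\infty$ on the right-hand side is unproblematic by monotone convergence together with the superexponential tail bound \eqref{eq:2.2}, which guarantees $\int|\alpha|^{\lambda}\mathcal{P}_q(\alpha)\,d\alpha<\infty$.

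The crux is therefore to control the error incurred on the left-hand side when one passes from $\mathcal{F}_R$ back to $|\alpha|^{\lambda}$, uniformly in $X$. I would estimate
\begin{equation*}
\frac{1}{X}\int_{X}^{2X}\Big(\big|\mathcal{E}_q(x)/x^{2q-1}\big|^{\lambda}-\mathcal{F}_R\big(\mathcal{E}_q(x)/x^{2q-1}\big)\Big)\,dx\le\frac{1}{X}\int_{X}^{2X}\big|\mathcal{E}_q(x)/x^{2q-1}\big|^{\lambda}\,\mathds{1}\big\{|\mathcal{E}_q(x)/x^{2q-1}|>R^{1/\lambda}\big\}\,dx,
\end{equation*}
and then apply H\"older's inequality with exponents $2/\lambda$ and $2/(2-\lambda)$ (valid exactly because $\lambda\le 2$) to split this into the second-moment factor and a measure-of-the-bad-set factor. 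The second moment is controlled uniformly in $X$ by the mean-square estimates \eqref{eq:1.3}–\eqref{eq:1.4}, whose main terms are finite absolutely convergent series; this is the decisive input that confines the argument to the range $\lambda\le 2$. The measure of the set where $|\mathcal{E}_q(x)/x^{2q-1}|>R^{1/\lambda}$ is bounded, again uniformly in $X$, by a second application of the mean-square bound via Chebyshev, giving a factor that tends to $0$ as $R\to\infty$. Combining these shows the tail contribution is $O(R^{-\eta})$ for some $\eta>0$, uniformly in $X$, so the double limit $\lim_{R\to\infty}\lim_{X\to\infty}$ may be exchanged with $\lim_{X\to\infty}\lim_{R\to\infty}$.

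For the refinement \eqref{eq:2.7} at $\lambda=1$, I would repeat the scheme without the absolute value, using the \emph{signed} truncations $\widetilde{\mathcal{F}}_R(\alpha)=\max\{-R,\min\{\alpha,R\}\}$; the same H\"older/Chebyshev uniform-integrability bound applies verbatim since $|\widetilde{\mathcal{F}}_R(\alpha)-\alpha|\le|\alpha|\,\mathds{1}\{|\alpha|>R\}$, and the first-moment integral $\int\alpha\mathcal{P}_q(\alpha)\,d\alpha$ converges by \eqref{eq:2.2}. The main obstacle I anticipate is not any single estimate but the bookkeeping required to make the uniform integrability genuinely uniform in $X$: one must ensure that the $O(X^{-1}\log^2 X)$ remainder terms in \eqref{eq:1.3}–\eqref{eq:1.4} do not spoil the uniformity, which is handled by first choosing $X$ large enough that these remainders are dominated by the (finite) main terms, and only then letting $R\to\infty$. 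Once the interchange of limits is justified, \eqref{eq:2.6} and \eqref{eq:2.7} follow immediately.
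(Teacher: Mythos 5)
Your argument for $0<\lambda<2$ is sound and is essentially the paper's own: the paper truncates with a smooth cutoff $\mathcal{F}_n(y)=|y|^{\lambda}\psi_n(y)$ and bounds the tail by $n^{\lambda-2}$ times the mean square \eqref{eq:6.15}, while you truncate with $\min\{|\alpha|^{\lambda},R\}$ and use H\"older/Chebyshev; both reduce to the same input, namely the uniform-in-$X$ second moment bound \eqref{eq:1.3}--\eqref{eq:1.4}. The signed case $\lambda=1$ goes through identically in both treatments.

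The genuine gap is the endpoint $\lambda=2$, which your proof does not actually cover. Your H\"older step uses exponents $2/\lambda$ and $2/(2-\lambda)$, and at $\lambda=2$ the second exponent is $\infty$: the Chebyshev factor is then raised to the power $(2-\lambda)/2=0$, so the tail bound degenerates to $O(1)$ rather than $O(R^{-\eta})$, and the interchange of limits is not justified. This is not a repairable bookkeeping issue: weak convergence together with \emph{bounded} (or even convergent) second moments does not imply convergence of second moments --- mass can escape to infinity at exactly the quadratic scale, as in the standard example of a random variable equal to $\sqrt{n}$ with probability $1/n$ and $0$ otherwise. What is needed at $\lambda=2$ is uniform integrability of the \emph{squares}, which no estimate available in the paper (short of a $(2+\epsilon)$-moment bound, which is not proved) delivers abstractly. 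The paper therefore handles $\lambda=2$ by an entirely different, explicit route: it quotes the sharp asymptotic for the second moment from \eqref{eq:6.40}--\eqref{eq:6.41} (Theorem 2 of the earlier work) and matches it, series against series, with
\begin{equation*}
\bigintssss\limits_{-\infty}^{\infty}\alpha^{2}\mathcal{P}_{q}(\alpha)\,\textit{d}\alpha=\sum_{m=1}^{\infty}\mathcal{Q}_{q}(m,2)\,,
\end{equation*}
the right-hand side being computed in closed form via \eqref{eq:5.36}--\eqref{eq:5.37} in Lemma 6 and the moment formula of Theorem 2. Your proposal needs this (or an equivalent identification of the limiting second moment with the second moment of $\mathcal{P}_{q}$) as a separate argument; without it, the statement for $\lambda=2$ remains unproved.
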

\noindent
\textbf{\textit{On the method of proof}}.
Our main idea of the proof is to show, using the results obtained in \cite{gath2019analogue}, that $\mathcal{E}_{q}(x)/x^{2q-1}$ can be approximated in the mean by certain oscillating series with incommensurable frequencies. Our problem is then to establish the existence of a limiting distribution for almost periodic functions of a special from. These functions are given by a conditionally convergent square-summable series $\sum_{m=1}^{\infty}\upphi_{q,m}\big(\gamma_{m}t\big)$ with linearly independent frequencies $\gamma_{m}$, where $\upphi_{q,m}(\cdot)$ are real valued continuous functions satisfying certain conditions. Heath-Brown \cite{heath1992distribution}, see also  \cite{bleher1993distribution} and \cite{bleher1992distribution}, has proved some general theorems concerning the limiting distribution of certain series similar to ours. The results however can not be applied to our case (at least not directly), since to begin with the functions $\upphi_{q,m}(\cdot)$ are not periodic. Nevertheless, the principle ideas developed in \cite{heath1992distribution} can be made to work in our case. As will be apparent, the lack of periodicity will not be the only issue we shall be faced with in the analysis stage of $\upphi_{q,m}(\cdot)$ and there will be additional difficulties, more intrinsic in nature, that we shall need to overcome.\\\\ 
\textbf{\textit{Notation and conventions}}. Throughout this paper, $q\geq3$ is an arbitrary fixed integer. The following notation will occur repeatedly in this paper:
\begin{equation*}
\begin{split}
1.\,\,\,\,\chi=\text{\,the non-trivial Dirichlet character (mod 4)}
\end{split}
\end{equation*}
\begin{equation*}
\begin{split}
2.\,\,\,\,\mu=\text{\,the M{\"o}bius function}
\end{split}
\end{equation*}
\begin{equation*}
\begin{split}
3.\,\,\,\,\gamma_{m}=\sqrt{m}\quad;\quad m\in\mathbb{N}
\end{split}
\end{equation*}
\begin{equation*}
\begin{split}
4.\,\,\,\,\xi\big(d;q\big)=\mathds{1}_{d\equiv1(2)}+(-1)^{\frac{q}{2}+1}\mathds{1}_{d\equiv0(2)}+(-1)^{\frac{q}{2}}2^{q}\mathds{1}_{d\equiv0(4)}\quad;\quad q\equiv0\,(2)
\end{split}
\end{equation*}
\begin{equation*}
\begin{split}
5.\,\,\,\,\varrho_{q}=\frac{\pi^{q}}{(1-2^{-q})\Gamma(q)\zeta(q)}\quad;\quad\zeta(s)=\sum_{n=1}^{\infty}\frac{1}{n^{s}}\,\,,\,\,\Re(s)>1
\end{split}
\end{equation*}
\begin{equation*}
\begin{split}
6.\,\,\,\,\varrho_{\chi,q}=\frac{\pi^{q}}{2^{q-1}\Gamma(q)L(q,\chi)}\quad;\quad L(s,\chi)=\sum_{n=1}^{\infty}\frac{\chi(n)}{n^{s}}\,\,,\,\,\Re(s)>1
\end{split}
\end{equation*}
\begin{equation*}
\begin{split}
7.\,\,\,\,r_{2}(m)=\big|\big\{\big(a,b\big)\in\mathbb{Z}^{2}:a^{2}+b^{2}=m\big\}\big|
\end{split}
\end{equation*}
\section{Almost periodicity of $\mathcal{E}_{q}(x)/x^{2q-1}$}
\subsection{Statement of Theorem 4}
In this section we show that the normalized error term $\mathcal{E}_{q}(x)/x^{2q-1}$ can be approximated, in a suitable sense, by means of certain oscillating series. This notion is made precise in the following theorem:
\begin{thm}
We have:
\begin{equation}\label{eq:4.1}
\lim_{M\to\infty}\limsup_{X\to\infty}\frac{1}{X}\bigintssss\limits_{X}^{2X}\Big|\mathcal{E}_{q}(x)/x^{2q-1}-\sum_{m\leq M}\upphi_{q,m}\big(\gamma_{m}x^{2}\big)\Big|^{2}\textit{d}x=0
\end{equation}
where the functions $\upphi_{q,m}(\cdot)$ are defined in accordance with the parity of $q$. For $q\equiv0\,(2)$: 
\begin{equation*}
\upphi_{q,m}(t)=\frac{\varrho_{q}}{2\pi}\frac{\mu^{2}(m)}{m^{3/4}}\sum_{d,k=1}^{\infty}\frac{\xi\big(d;q\big)r_{2}\big(mk^{2},d;q\big)}{d^{q-3/2}k^{3/2}}\sin{\Big(2\pi\frac{k}{d}t-\frac{\pi}{4}\Big)}
\end{equation*}
and for $q\equiv1\,(2)$: 
\begin{equation*}
\begin{split}
\upphi_{q,m}(t)=2^{q-2}\frac{\varrho_{\chi,q}}{\pi}\frac{\mu^{2}(m)}{m^{3/4}}\Bigg\{&\sum_{d,k=1}^{\infty}\frac{\chi(d)r_{2}\big(mk^{2},d;q\big)}{d^{q-3/2}k^{3/2}}\sin{\Big(2\pi\frac{k}{d}t-\frac{\pi}{4}\Big)}+\\
&+(-1)^{\frac{q+1}{2}}2^{q}\underset{\,\,d\equiv0\,(4)}{\sum_{d,k=1}^{\infty}}\frac{r_{2,\chi}\big(mk^{2},d;q\big)}{d^{q-3/2}k^{3/2}}\cos{\Big(2\pi \frac{k}{d}t-\frac{\pi}{4}\Big)}\Bigg\}\,.
\end{split}
\end{equation*}
\end{thm}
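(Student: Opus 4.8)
\textbf{\textit{Proof strategy}}. Write $f(x)=\mathcal{E}_{q}(x)/x^{2q-1}$ and $g_{M}(x)=\sum_{m\leq M}\upphi_{q,m}\big(\gamma_{m}x^{2}\big)$, and expand the mean square of the difference as
\begin{equation*}
\frac{1}{X}\int_{X}^{2X}\big|f-g_{M}\big|^{2}\,\textit{d}x=\frac{1}{X}\int_{X}^{2X}|f|^{2}\,\textit{d}x-\frac{2}{X}\int_{X}^{2X}fg_{M}\,\textit{d}x+\frac{1}{X}\int_{X}^{2X}|g_{M}|^{2}\,\textit{d}x\,.
\end{equation*}
The plan is to show that, as $X\to\infty$, the first term tends to the explicit constant $V_{q}$ furnished by the second-moment estimate \eqref{eq:1.3}--\eqref{eq:1.4}, that the third term tends to a partial sum $V_{q,M}$ of a convergent series with $V_{q,M}\uparrow V_{q}$, and that the cross term tends to exactly $V_{q,M}$. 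Granting these, the right-hand side tends to $V_{q}-2V_{q,M}+V_{q,M}=V_{q}-V_{q,M}$ as $X\to\infty$, and letting $M\to\infty$ annihilates the remaining difference, which is precisely \eqref{eq:4.1}.

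The self-correlation of the model is governed by the incommensurability of the frequencies. After the substitution $u=x^{2}$ the function $g_{M}$ becomes a finite trigonometric sum in $u$ whose frequencies are the numbers $\gamma_{m}k/d=\sqrt{m}\,k/d$; since $\mu^{2}(m)=1$ forces $m$ to be square-free, the values $\{\sqrt{m}\}$ are linearly independent over $\mathbb{Q}$, so two such frequencies coincide precisely when the square-free indices agree and the reduced fractions $k/d$ agree. Expanding $|g_{M}|^{2}$ into products of two trigonometric factors and integrating against the weight $\textit{d}u/2\sqrt{u}$ over $[X^{2},4X^{2}]$, every off-diagonal pair carries a factor $\cos\big(2\pi(\omega_{1}\pm\omega_{2})u+\cdots\big)$ with $\omega_{1}\neq\omega_{2}$ and, by a single integration by parts, contributes $o(1)$ after the $X^{-1}$ normalisation, while the diagonal pairs survive. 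This yields $\lim_{X\to\infty}X^{-1}\int_{X}^{2X}|g_{M}|^{2}=\sum_{m\leq M}\sigma_{q}(m)=:V_{q,M}$, where $\sigma_{q}(m)$ is the mean square of the single block $\upphi_{q,m}(\gamma_{m}x^{2})$. By the very construction of $\upphi_{q,m}$, grouping its internal $d,k$-sum by the value of $k/d$ and squaring the resulting amplitudes reproduces exactly the $m$-th summand on the right of \eqref{eq:1.3}--\eqref{eq:1.4}; hence $\sum_{m=1}^{\infty}\sigma_{q}(m)=V_{q}$ and $V_{q,M}\uparrow V_{q}$.

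The crux is the cross term $X^{-1}\int_{X}^{2X}fg_{M}\,\textit{d}x$. Here I would insert the oscillatory (Voronoi-type) expansion of $\mathcal{E}_{q}(x)/x^{2q-1}$ that underlies the proof of \eqref{eq:1.3}--\eqref{eq:1.4} in \cite{gath2019analogue}: writing $f$, up to a remainder that is negligible in mean square, as the same type of trigonometric sum in $u=x^{2}$ with frequencies $\sqrt{m}\,k/d$ and the matching coefficients, the correlation with $g_{M}$ again collapses onto the diagonal by the same linear-independence argument, and the surviving contributions are exactly the blocks with index $m\leq M$. This produces $\lim_{X\to\infty}X^{-1}\int_{X}^{2X}fg_{M}=\sum_{m\leq M}\sigma_{q}(m)=V_{q,M}$, matching the model's self-energy block for block. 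Combining the three limits and letting $M\to\infty$ gives \eqref{eq:4.1}.

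The main obstacle is the uniform control of the off-diagonal oscillatory integrals, both in the self-correlation and in the cross term. In contrast with the periodic setting of Hypothesis (H), the frequency gaps $|\omega_{1}-\omega_{2}|=\big|\sqrt{m_{1}}\,k_{1}/d_{1}-\sqrt{m_{2}}\,k_{2}/d_{2}\big|$ can be arbitrarily small as the indices grow, so the elementary bound $\ll|\omega_{1}-\omega_{2}|^{-1}X^{-1}$ obtained by integrating against $\textit{d}u/2\sqrt{u}$ must be summed against the amplitudes with great care, exploiting the square-summability encoded in \eqref{eq:1.3}--\eqref{eq:1.4} together with the arithmetic size of $r_{2}\big(mk^{2},d;q\big)$ and $r_{2,\chi}\big(mk^{2},d;q\big)$. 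Because the inner $d,k$-series converges only conditionally, the argument must first truncate it, bound the tails in mean square uniformly in $X$, and only then pass to the limit $X\to\infty$; the justification of these interchanges of limits and summations is the delicate point on which the proof ultimately rests.
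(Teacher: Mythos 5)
Your strategy is sound and would prove Theorem 4, but it takes a genuinely different route from the paper, and the comparison is instructive. You expand the mean square into three terms and evaluate each limit exactly, concluding by the cancellation $V_{q}-2V_{q,M}+V_{q,M}=V_{q}-V_{q,M}$, which tends to zero by positivity of the blocks and the tiling identity $\sum_{m}\mathcal{Q}_{q}(m,2)=V_{q}$ (a consequence of the bijection $n=mk^{2}$ with $m$ square-free; the paper records this identity only later, in the proof of Theorem 3, where $V_{q}$ is the constant in \eqref{eq:1.3}--\eqref{eq:1.4}). The paper never expands the square: it runs a chain of successive approximations controlled in mean square by the triangle inequality and Cauchy--Schwarz --- first $\mathcal{E}_{q}(x)/x^{2q-1}\to\mathcal{S}_{q,H}(x)$ (Propositions 1--2 and Lemma 1), then truncation to $m\le M$ (Proposition 3), then replacement of the $H$-dependent coefficients $\mathfrak{a}_{H}$, $\mathfrak{a}_{H,\chi}$ by their limiting values via Lemma 3 --- each step carrying an explicit rate $\ll M^{-1/2}\log^{3}2M$. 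What each approach buys: the paper's route needs only upper bounds for mean squares of differences (never the exact value of $\lim_{X}X^{-1}\int_{X}^{2X}|\mathcal{E}_{q}(x)/x^{2q-1}|^{2}\,\textit{d}x$) and yields a quantitative rate in $M$; your route is conceptually cleaner but rigid --- all three limits must be evaluated exactly, with matching normalizations, or the cancellation fails --- and it produces convergence without a rate. Two caveats. First, the step you yourself flag as delicate is not merely a matter of care with integration by parts: in the cross term the frequency set of the Voronoi expansion grows with $X$ (here $H=X^{2}/2$, so $n\le2H^{2}$ ranges up to $\sim X^{4}$), the gaps $|\sqrt{n_{1}}/d_{1}-\sqrt{n_{2}}/d_{2}|$ can be as small as $\asymp X^{-2}$, and then the first-derivative-test bound for the normalized integral is $O(1)$, not $o(1)$; the required uniform off-diagonal treatment is exactly the content of Lemma 2 of the paper (Lemma 4.1 of \cite{gath2019analogue}), which your proof would have to import just as the paper does, applied bilinearly (for instance by polarization, after truncating the $k$-sums in $g_{M}$ so that its coefficients satisfy condition C.2). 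Second, a small correction: for $q\ge3$ the inner $d,k$-series defining $\upphi_{q,m}$ converges absolutely, since $r_{2}\big(mk^{2},d;q\big)\le r_{2}\big(mk^{2}\big)\ll_{\epsilon}\big(mk^{2}\big)^{\epsilon}$ and $q-3/2>1$; the conditional convergence alluded to in the paper concerns the outer sum over $m$, which for fixed $M$ is finite, so the tail bookkeeping you anticipate at that point is simpler than you fear.
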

\noindent
The proof of Theorem 4 will be given at the end of this section. We now proceed to analyse the approximate expression for $\mathcal{E}_{q}(x)$ obtained in \cite{gath2019analogue}.
\subsection{The approximate expression for $\mathcal{E}_{q}(x)$}
In \cite{gath2019analogue} we obtained a Vorono\"{i}-type series expansion for the error term $\mathcal{E}_{q}(x)$. The derivation of this series expansion is quite involved and highly technical, and occupies a substantial prat of \cite{gath2019analogue}. Before we present it, we give a rough sketch of the main ideas. The first step is to execute the lattice point count in the following form:
\begin{equation*}
\begin{split}
\big|\mathbb{Z}^{2q+1}\cap\delta_{x}\mathcal{B}\big|=&2\sum_{0\,\leq\,m\,\leq\, \frac{x^{2}}{\sqrt{2}}}r_{2q}(m)\Big(\sqrt{x^{4}-m^{2}}-m\Big)+\sum_{0\,<\,m\,\leq\, \frac{x^{2}}{\sqrt{2}}}r_{2q}(m)+\sum_{0\,\leq\,|n|\,\leq\, \frac{x^{2}}{\sqrt{2}}}\,\,\sum_{|n|\,<\,m\,\leq\,\sqrt{x^{4}-n^{2}}}r_{2q}(m)-\\
&-2\sum_{0\,\leq\,m\,\leq\, \frac{x^{2}}{\sqrt{2}}}r_{2q}(m)2\psi\Big(\sqrt{x^{4}-m^{2}}\,\Big)
\end{split}
\end{equation*}
where $\psi(t)=t-[t]-1/2$, and $r_{2q}(m)$ is the counting function for the number of representation of the integer $m$ as a sum of $2q$-squarse. One then proceeds to extract the main term. The first sum is estimated using contour integration and gives a contribution to the main term plus a negligible error term. The third sum is estimated directly using the Euler–Maclaurin summation formula, which together with the second sum, gives a contribution to the main term plus a non-negligible error term in the form of a certain oscillating sum involving the function $\psi$. Having extracted the main term, the next step is to subject the oscillating sums involving the function $\psi$ to a transformation process whose end result is the Vorono\"{i}-type series expansion mentioned above. This process begins with an application of Vaaler's Lemma, which enables us to approximate the above $\psi$-sums by a certain type of exponential sums. In turn, these exponential sums are estimated using a sharp form of the \textit{B}-process of Van der Corput. The end result of this long process is: 
\begin{equation*}
\begin{split}
\mathcal{E}_{q}(x)/x^{2q-1}\approx&\sum_{d,m}\nu(d)\frac{r_{2}\big(m,d;q\big)}{d^{q-3/2}m^{3/4}}\sin{\Big(2\pi \frac{\sqrt{m}}{d}x^{2}-\frac{\pi}{4}\Big)}+\\
&+\mathds{1}_{q\equiv1\,(2)}\sum_{d,m}\eta(d)\frac{r_{2,\chi}\big(m,d;q\big)}{d^{q-3/2}m^{3/4}}\cos{\Big(2\pi \frac{\sqrt{m}}{d}x^{2}-\frac{\pi}{4}\Big)}+\textit{lower order terms}
\end{split}
\end{equation*}
where the range of summation depends on the variable $x$, and $\nu(\cdot)$, $\eta(\cdot)$ are certain bounded coefficients which depend on $q$. The fact of the matter is that the series expansion for $\mathcal{E}_{q}(x)$ is a bit more involved then the one sketched above. In order to state it, we first need to introduce some notation and definitions.
\begin{mydef}
Let $H\geq1$. For integers $m,d\geq1$ we define the following arithmetical functions:
\begin{equation*}
\begin{split}
\mathfrak{a}_{H}\big(m,d;q\big)=\frac{1}{m^{3/4}}\Bigg\{\underset{n\equiv0\,(d)}{\underset{0\leq n\leq h}{\underset{1\leq h\leq H}{\sideset{}{''}\sum_{\,\,\,\,\,\,n^{2}+h^{2}=m}}}}\tau\bigg(\frac{h}{[H]+1}\bigg)\bigg(\frac{h}{\sqrt{m}}\bigg)^{q-1}+\underset{h\equiv0\,(d)}{\underset{0\leq n\leq h}{\underset{1\leq h\leq H}{\sideset{}{''}\sum_{\,\,\,\,\,\,n^{2}+h^{2}=m}}}}\tau\bigg(\frac{h}{d[H/d]+d}\bigg)\bigg(\frac{n}{\sqrt{m}}\bigg)^{q-1}\Bigg\}
\end{split}
\end{equation*}
\begin{equation*}
\begin{split}
\mathfrak{a}^{\ast}_{H}\big(m,d;q\big)=\frac{1}{m^{3/4}}\Bigg\{\underset{n\equiv0\,(d)}{\underset{0\leq n\leq h}{\underset{1\leq h\leq H}{\sideset{}{''}\sum_{\,\,\,\,\,\,n^{2}+h^{2}=m}}}}\tau^{\ast}\bigg(\frac{h}{[H]+1}\bigg)\bigg(\frac{h}{\sqrt{m}}\bigg)^{q-1}+\underset{h\equiv0\,(d)}{\underset{0\leq n\leq h}{\underset{1\leq h\leq H}{\sideset{}{''}\sum_{\,\,\,\,\,\,n^{2}+h^{2}=m}}}}\tau^{\ast}\bigg(\frac{h}{d[H/d]+d}\bigg)\bigg(\frac{n}{\sqrt{m}}\bigg)^{q-1}\Bigg\}
\end{split}
\end{equation*}
\begin{equation*}
\begin{split}
\mathfrak{a}_{H,\chi}\big(m,d;q\big)=\frac{2}{m^{3/4}}\Bigg\{\underset{n\equiv0\,(d)}{\underset{0\leq n\leq h}{\underset{1\leq h\leq H}{\sideset{}{''}\sum_{\,\,\,\,\,\,n^{2}+h^{2}=m}}}}\chi(-h)\tau\bigg(\frac{h}{[H]+1}\bigg)\bigg(\frac{h}{\sqrt{m}}\bigg)^{q-1}+\underset{h\,\equiv\,0\,(d)}{\underset{0\,\leq\,n\,\leq\,h}{\underset{1\,\leq\, h\,\leq\, H}{\sideset{}{''}\sum_{n^{2}+h^{2}=m}}}}\chi(-n)\tau\bigg(\frac{h}{d[H/d]+d}\bigg)\bigg(\frac{n}{\sqrt{m}}\bigg)^{q-1}\Bigg\}
\end{split}
\end{equation*}
\begin{equation*}
\begin{split}
&\mathfrak{b}^{\ast}_{H}\big(m,d;q\big)=\frac{2}{m^{3/4}}\Bigg\{\underset{n\equiv0\,(d)}{\underset{0\leq n\leq h}{\underset{1\leq h\leq H}{\sideset{}{''}\sum_{\,\,\,\,\,\,n^{2}+h^{2}=m}}}}\lambda(h)\tau^{\ast}\bigg(\frac{h}{[H]+1}\bigg)\bigg(\frac{h}{\sqrt{m}}\bigg)^{q-1}+2\underset{n\equiv0\,(4)\,,\,h\equiv0\,(d)}{\underset{0\leq n\leq h}{\underset{1\leq h\leq H}{\sideset{}{''}\sum_{\,\,\,\,\,\,n^{2}+h^{2}=m}}}}\tau^{\ast}\bigg(\frac{h}{d[H/d]+d}\bigg)\bigg(\frac{n}{\sqrt{m}}\bigg)^{q-1}\Bigg\}
\end{split}
\end{equation*}
where the double-dash $''$ indicates that the terms $(n,h)$ corresponding to $n=0,h$ are multiplied by $1/2$, and $\lambda(h)=\mathds{1}_{h\equiv0(4)}-\mathds{1}_{h\equiv2(4)}$. The functions $\tau(t)$ and $\tau^{\ast}(t)$ are given by:
\begin{equation*}
\begin{split}
&\tau(t)=t(1-t)\cot{(\pi t)}+\pi^{-1}t\quad;\quad\,\,\,\,\,\,0<t<1\\
&\tau^{\ast}(t)=t(1-t)\quad\qquad\quad\qquad\,\,\,\,\,;\qquad0<t<1\,.
\end{split}
\end{equation*}
\end{mydef}
\begin{rem}
Note that for integers $m>2H^{2}$ the above arithmetical functions vanish identically. Let us also note that since $\tau(t)$ is bounded on the closed interval $[0,1]$, in fact $\tau(t)=\pi^{-1}+O\big(t^{2}\big)$ uniformly for $t\in[0,1]$, we have that:
\begin{equation*}
\big|\mathfrak{a}_{H,\chi}\big(m,d;q\big)\big|\leq\mathfrak{a}_{H}\big(m,d;q\big)\leq C\frac{r_{2}\big(m,d;q\big)}{m^{3/4}}\mathds{1}_{m\leq2H^{2}}
\end{equation*}
for any integers $m,d\geq1$ and any $H\geq1$, where $C>0$ is an absolute constant.
\end{rem}
\noindent
With the above arithmetical functions we construct the following trigonometric sums.
\begin{mydef}
Let $H\geq1$. For real $x>0$ we define the following trigonometric sum in accordance with the parity of $q$. For $q\equiv0\,(2)$:  
\begin{equation*}
\begin{split}
&\mathcal{S}_{q,H}(x)=2\varrho_{q}\underset{d\leq\sqrt{H}}{\sum_{\,m\leq 2H^{2}}}\,\frac{\xi\big(d;q\big)\mathfrak{a}_{H}\big(m,d;q\big)}{d^{q-3/2}}\sin{\Big(2\pi \frac{\sqrt{m}}{d}x^{2}-\frac{\pi}{4}\Big)}\\
&\mathcal{S}^{\ast}_{q,H}(x)=2\varrho_{q}\underset{d\leq\sqrt{H}}{\sum_{\,m\leq 2H^{2}}}\,\frac{|\xi(d;q)|\mathfrak{a}^{\ast}_{H}\big(m,d\big)}{d^{q-3/2}}\cos{\Big(2\pi \frac{\sqrt{m}}{d}x^{2}-\frac{\pi}{4}\Big)}\,.
\end{split}
\end{equation*}
For $q\equiv1\,(2)$:
\begin{equation*}
\begin{split}
\mathcal{S}_{q,H}(x)=&2^{q}\varrho_{\chi,q}\underset{d\leq\sqrt{H}}{\sum_{\,m\leq 2H^{2}}}\,\frac{\chi(d)\mathfrak{a}_{H}\big(m,d;q\big)}{d^{q-3/2}}\sin{\Big(2\pi \frac{\sqrt{m}}{d}x^{2}-\frac{\pi}{4}\Big)}+\\
&+(-1)^{\frac{q-1}{2}}2^{2q-1}\varrho_{\chi,q}\underset{\,d\equiv0\,(4)}{\underset{d\leq\sqrt{H}}{\sum_{\,m\leq 2H^{2}}}}\,\frac{\mathfrak{a}_{H,\chi}\big(m,d;q\big)}{d^{q-3/2}}\cos{\Big(2\pi \frac{\sqrt{m}}{d}x^{2}-\frac{\pi}{4}\Big)}\,.
\end{split}
\end{equation*}
\begin{equation*}
\begin{split}
\mathcal{S}^{\ast}_{q,H}(x)=2\varrho_{\chi,q}\underset{d\leq\sqrt{H}}{\sum_{\,m\leq 2H^{2}}}\,\frac{\mathfrak{d}^{\ast}_{H}\big(m,d;q\big)}{d^{q-3/2}}\cos{\Big(2\pi \frac{\sqrt{m}}{d}x^{2}-\frac{\pi}{4}\Big)}
\end{split}
\end{equation*}
\begin{equation*}
\begin{split}
\mathcal{T}^{H}_{q,\chi}(x)=2^{q-1}\varrho_{\chi,q}\underset{d>\sqrt{H}}{\sum_{hd\leq H}}\frac{\chi(d)}{d^{q-3/2}h^{3/2}}\tau\bigg(\frac{h}{[H/d]+1}\bigg)\sin{\Big(2\pi \frac{h}{d}x^{2}-\frac{\pi}{4}\Big)}
\end{split}
\end{equation*}
\begin{equation*}
\begin{split}
\mathcal{T}^{H,\chi}_{q}(x)=(-1)^{\frac{q+1}{2}}2^{2q-1}\varrho_{\chi,q}\underset{d\equiv0\,(4)}{\underset{d>\sqrt{H}}{\sum_{hd\leq H}}}\frac{\chi(h)}{d^{q-3/2}h^{3/2}}\tau\bigg(\frac{h}{[H/d]+1}\bigg)\cos{\Big(2\pi \frac{h}{d}x^{2}-\frac{\pi}{4}\Big)}
\end{split}
\end{equation*}
\begin{equation*}
\begin{split}
\mathcal{T}^{H}_{q}(x)=2\varrho_{\chi,q}\underset{d>\sqrt{H}}{\sum_{hd\leq H}}\frac{\lambda^{\ast}(h,d)}{d^{q-3/2}h^{3/2}}\tau^{\ast}\bigg(\frac{h}{[H/d]+1}\bigg)\cos{\Big(2\pi \frac{h}{d}x^{2}-\frac{\pi}{4}\Big)}
\end{split}
\end{equation*}
where $\mathfrak{d}^{\ast}_{H}\big(m,d;q\big)=2^{q-1}\mathfrak{a}^{\ast}_{H}\big(m,d;q\big)+2^{2q-2}\mathds{1}_{d\equiv0(4)}\mathfrak{b}^{\ast}_{H}\big(m,d;q\big)$, and $\lambda^{\ast}(h,d)=2^{q-2}+\mathds{1}_{d\equiv0(4)}4^{q-1}\lambda(h)$.
\end{mydef}
\noindent
We have the following approximate expression for $\mathcal{E}_{q}(x)$ (see \cite{gath2019analogue}, $\S$\textit{3.4} \textit{Proposition 3.1}).
\begin{prop}
Let $X>0$ be large, $H=H(X)=X^{2}/2$ and $X\leq x\leq2X$. Then:
\begin{equation}\label{eq:4.2}
\begin{split}
\mathcal{E}_{q}(x)/x^{2q-1}=\mathcal{S}_{q,H}(x)+\mathcal{R}^{H}_{q}(x)+\mathds{1}_{q=3}\bigg\{\mathcal{T}^{H}_{q,\chi}(x)+\mathcal{T}^{H,\chi}_{q}(x)\bigg\}+O\Big(x^{-1}\log^{2}{x}\Big)
\end{split}
\end{equation}
where for $q\equiv0\,(2)$:
\begin{equation}\label{eq:4.3}
\begin{split}
\big|\mathcal{R}^{H}_{q}(x)\big|\leq\mathcal{S}^{\ast}_{q,H}(x)+\tilde{c}_{q}x^{-1}\log^{2}{x}\quad\qquad\qquad\qquad;\quad \tilde{c}_{q}>0
\end{split}
\end{equation}
and for $q\equiv1\,(2)$:
\begin{equation}\label{eq:4.4}
\begin{split}
\big|\mathcal{R}^{H}_{q}(x)\big|\leq\mathcal{S}^{\ast}_{q,H}(x)
+\mathds{1}_{q=3}\mathcal{T}^{H}_{q}(x)+\tilde{c}_{q}x^{-1}\log^{2}{x}\quad;\quad \tilde{c}_{q}>0\,.
\end{split}
\end{equation}
\end{prop}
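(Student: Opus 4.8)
This statement is, word for word, Proposition~3.1 of \cite{gath2019analogue}, so in the present paper it may simply be invoked; for completeness I sketch how the approximate expression is assembled and indicate where the genuine difficulty lies. First I would set up the count geometrically. Writing a lattice point as $(z,z')\in\mathbb{Z}^{2q}\times\mathbb{Z}$, the condition $\mathcal{N}(z,z')\leq x$ reads $z'^{2}\leq x^{4}-|z|_{2}^{4}$, so for each $z$ with $|z|_{2}^{2}=m$ there are exactly $2\big[\sqrt{x^{4}-m^{2}}\,\big]+1$ admissible $z'$. Summing over $z$ with multiplicity $r_{2q}(m)$ and using $[t]=t-\psi(t)-\tfrac12$ produces precisely the four-fold decomposition displayed above: two essentially smooth pieces, a boundary piece to be handled by Euler--Maclaurin, and the genuinely oscillatory term $-4\sum_{m\leq x^{2}/\sqrt2}r_{2q}(m)\,\psi\big(\sqrt{x^{4}-m^{2}}\,\big)$.

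Second I would extract the main term. The smooth sum $\sum_{m}r_{2q}(m)\big(\sqrt{x^{4}-m^{2}}-m\big)$ I would evaluate by a Mellin/contour argument against the Dirichlet series $\sum_{m}r_{2q}(m)m^{-s}$, shifting the line of integration and collecting the residue at its pole, which reproduces the polynomial $\textit{vol}(\mathcal{B})x^{2q+2}$ up to a negligible remainder; the boundary sums I would treat by Euler--Maclaurin, yielding a further contribution to the volume term plus a controlled error of size $O\big(x^{2q-1}\cdot x^{-1}\log^{2}x\big)$. After subtracting $\textit{vol}(\mathcal{B})x^{2q+2}$ and dividing by $x^{2q-1}$, the entire fluctuation is carried by the $\psi$-sum.

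Third, and this is the crux, I would transform the $\psi$-sum into the Vorono\"i-type series that defines $\mathcal{S}_{q,H}$. Vaaler's lemma replaces $\psi\big(\sqrt{x^{4}-m^{2}}\,\big)$ by a trigonometric polynomial of degree comparable to $H$ together with a nonnegative majorant; the coefficients of this approximation are exactly what produce the kernel functions $\tau,\tau^{\ast}$ entering the arithmetic functions $\mathfrak{a}_{H}(m,d;q)$. Inserting $r_{2q}(m)$ as a $2q$-dimensional theta sum turns each Fourier term into an exponential sum with phase linear in $\sqrt{x^{4}-|z|_{2}^{4}}$, to which I would apply a sharp form of van der Corput's B-process, i.e.\ Poisson summation followed by stationary-phase evaluation in the $2q$ variables. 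The stationary-phase main term is where the two-variable weighted counts $r_{2}(m,d;q)$, the weights $(|a|/\sqrt m)^{q-1}$, the congruences $b\equiv0\,(d)$, the constants $\varrho_{q},\varrho_{\chi,q}$ and the phase shift $-\pi/4$ all arise, from the Hessian determinant, the Jacobian, and the rational-approximation (Farey) structure of the dual lattice; truncating at $H=X^{2}/2$ then isolates the finite sums $\mathcal{S}_{q,H}$, while the smooth Vaaler majorant, the van der Corput remainders, and the tail range $d>\sqrt H$ assemble into the majorant $\mathcal{R}^{H}_{q}$ with the bounds \eqref{eq:4.3}, \eqref{eq:4.4}, the extra terms $\mathcal{T}^{H}_{q,\chi},\mathcal{T}^{H,\chi}_{q}$ surviving only when $q=3$ because there the exponent $q-\tfrac32=\tfrac32$ makes the $d$-sum only borderline convergent.

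The hard part is the B-process step, for two reasons. First, the stationary-phase analysis must be carried out \emph{uniformly} in all parameters $(k,d,m)$ and over $x\in[X,2X]$, with every error term kept below $x^{-1}\log^{2}x$ after division by $x^{2q-1}$; in particular the directions near the north and south poles, where all $2q$ principal curvatures of $\partial\mathcal{B}$ degenerate, defeat the naive stationary-phase estimate and must be excised and bounded separately. Second, one must track the arithmetic of the dual lattice exactly, so that the congruence-split weights $\xi(d;q)$, the characters $\chi(d)$, and the powers of $2$ emerge with the correct signs and normalizations and fuse into the functions $r_{2}(m,d;q)$ and $r_{2,\chi}(m,d;q)$. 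This long and technical computation is precisely what occupies a substantial part of \cite{gath2019analogue}, and for the present purposes it is quoted wholesale.
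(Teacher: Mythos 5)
Your proposal takes exactly the same route as the paper: the paper does not prove this proposition internally either, but quotes it verbatim from \cite{gath2019analogue} ($\S$3.4, Proposition 3.1), preceded by the same sketch you give — the fiber-count decomposition of $\big|\mathbb{Z}^{2q+1}\cap\delta_{x}\mathcal{B}\big|$ with the $\psi$-sum cut at $m\leq x^{2}/\sqrt{2}$, main-term extraction by contour integration and Euler--Maclaurin, and the transformation of the $\psi$-sums via Vaaler's lemma and a sharp form of the van der Corput \textit{B}-process into the Vorono\"{i}-type expansion. Invoking the cited result wholesale, with a sketch of its derivation, is precisely what the paper does, so your treatment is correct and essentially identical.
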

\begin{rem}
The above approximate expression was key in establishing the asymptotic estimate for the second moment of $\mathcal{E}_{q}(x)$ with a sharp bound on the remainder term (see \cite{gath2019analogue}, $\S1.3$ \textit{Theorem 2}). For our present purposes, namely proving Theorem 4, one could work with a weaker form of the approximate expression.          
\end{rem}
\subsection{Approximating $\mathcal{E}_{q}(x)$ in the mean by a short initial segment of $\mathcal{S}_{q,H}(x)$} Our first objective is to dispose of the remainder terms appearing in \eqref{eq:4.2}. We have the following estimate (see \cite{gath2019analogue} $\S$\textit{4.2} \textit{Proposition 4.5}).
\begin{lem}
Let $X>0$ be large, $H=H(X)=X^{2}/2$. If $\mathcal{K}(x)=\mathcal{K}_{q,H}(x)$ denotes any of the following trigonometric sums: $\mathcal{S}^{\ast}_{q,H}(x),\,\mathcal{T}^{H}_{q,\chi}(x),\,\mathcal{T}^{H,\chi}_{q}(x),\,\mathcal{T}^{H}_{q}(x)$, then:
\begin{equation}\label{eq:4.5}
\frac{1}{X}\bigintssss\limits_{ X}^{2X}\mathcal{K}^{2}(x)\textit{d}x\ll\Big(X^{-1}\log{X}\Big)^{2}\,.
\end{equation}
\end{lem}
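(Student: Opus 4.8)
The four sums $\mathcal{S}^{\ast}_{q,H},\mathcal{T}^{H}_{q,\chi},\mathcal{T}^{H,\chi}_{q},\mathcal{T}^{H}_{q}$ are all of the shape $\mathcal{K}(x)=\sum_{\nu}c_{\nu}\,g_{\nu}\!\big(2\pi\theta_{\nu}x^{2}-\tfrac{\pi}{4}\big)$, where each $g_{\nu}$ is $\sin$ or $\cos$, and the real frequencies $\theta_{\nu}$ range over $\{\sqrt{m}/d:\,d\leq\sqrt{H},\,m\leq2H^{2}\}$ for $\mathcal{S}^{\ast}_{q,H}$ and over the rationals $\{h/d:\,d>\sqrt{H},\,hd\leq H\}$ for the three $\mathcal{T}$-sums (which, by Proposition 1, enter only when $q=3$). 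The plan is to expand $\mathcal{K}^{2}$ by the product-to-sum formulae and split the resulting double sum into a \emph{diagonal} part ($\theta_{\nu}=\theta_{\nu'}$) and an \emph{off-diagonal} part (the terms with $\theta_{\nu}\neq\theta_{\nu'}$, together with all sum-frequency $\theta_{\nu}+\theta_{\nu'}$ terms), and to show that both contribute $\ll(X^{-1}\log X)^{2}$ after dividing by $X$.

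For the diagonal part, integration over $[X,2X]$ produces $\tfrac12 X\sum_{\nu}|c_{\nu}|^{2}$ up to the harmless, finitely structured frequency coincidences $\sqrt{m}/d=\sqrt{m'}/d'$ with $(m,d)\neq(m',d')$. It therefore suffices to prove $\sum_{\nu}|c_{\nu}|^{2}\ll X^{-2}\log^{2}X$. For the $\mathcal{T}$-sums this is immediate from the rapid decay of the coefficients: with $q=3$ the coefficient carries a factor $d^{-(q-3/2)}=d^{-3/2}$ with $d>\sqrt{H}$, so $\sum_{\nu}|c_{\nu}|^{2}\ll\sum_{d>\sqrt{H}}d^{-3}\ll H^{-1}\ll X^{-2}$. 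For $\mathcal{S}^{\ast}_{q,H}$ the decisive point is that $\tau^{\ast}(t)=t(1-t)$ vanishes linearly at $t=0$: since the argument of $\tau^{\ast}$ in $\mathfrak{a}^{\ast}_{H}$ is $\asymp h/H$ with $h\leq\sqrt{m}$, one gains $|\mathfrak{a}^{\ast}_{H}(m,d;q)|\ll m^{-3/4}r_{2}(m,d;q)\min\{1,\sqrt{m}/H\}$, refining the Remark. Inserting this, bounding $r_{2}(m,d;q)\leq r_{2}(m)$, using $\sum_{m\leq M}r_{2}(m)^{2}\ll M\log M$ with $M\asymp H^{2}$, and summing the convergent series $\sum_{d\leq\sqrt{H}}|\xi(d;q)|^{2}d^{-(2q-3)}$ (convergent since $q\geq3$ and $\xi(d;q)=O_{q}(1)$), yields $\sum_{\nu}|c_{\nu}|^{2}\ll H^{-1}\log H\ll X^{-2}\log^{2}X$, as required.

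The off-diagonal part is the main obstacle. By the first-derivative test, the oscillatory integrals $\int_{X}^{2X}\cos\big(2\pi(\theta_{\nu}-\theta_{\nu'})x^{2}\big)\,dx$ and their $\sin$-analogues are each $\ll\big(X|\theta_{\nu}-\theta_{\nu'}|\big)^{-1}$, since the phase derivative $4\pi(\theta_{\nu}-\theta_{\nu'})x$ is of constant sign and $\gg|\theta_{\nu}-\theta_{\nu'}|X$ in modulus on $[X,2X]$; the sum-frequency terms are handled identically and never produce a small denominator. Hence the off-diagonal contribution to $\tfrac1X\int_{X}^{2X}\mathcal{K}^{2}$ is $\ll X^{-2}\sum_{\nu\neq\nu'}|c_{\nu}||c_{\nu'}|/|\theta_{\nu}-\theta_{\nu'}|$, and the crux is to bound this last sum by $\ll\log^{2}X$. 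The minimal gap between distinct frequencies is only $\asymp H^{-5/2}$, so a crude estimate is hopeless; instead I would organize the sum by the nonzero integer $\ell=d'^{2}m-d^{2}m'$ (respectively $\ell=hd'-h'd$ in the $\mathcal{T}$-case), for which
\[
|\theta_{\nu}-\theta_{\nu'}|=\frac{|\ell|}{dd'\big(d'\sqrt{m}+d\sqrt{m'}\big)},
\]
so that $1/|\theta_{\nu}-\theta_{\nu'}|$ comes with the saving factor $1/|\ell|$. Summing over $\ell\geq1$ then reduces the estimate to counting, for each $\ell$, the weighted number of quadruples satisfying the linear resonance $d'^{2}m-d^{2}m'=\pm\ell$ — a divisor- and $r_{2}$-type counting problem — while exploiting the coefficient decay in $d$ together with the $\tau^{\ast}$-smallness once more. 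This delicate Diophantine bookkeeping, which converts the $1/|\ell|$ saving into genuine convergence, is the technical heart of the argument; carrying it through gives off-diagonal $\ll X^{-1}\log^{2}X$, whence its contribution to $\tfrac1X\int_{X}^{2X}\mathcal{K}^{2}$ is $\ll X^{-2}\log^{2}X$.

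Combining the diagonal and off-diagonal bounds, and recalling that the $\mathcal{T}$-sums occur only for $q=3$ (so that the decay exponent $q-3/2=3/2$ may be used there), yields $\frac1X\int_{X}^{2X}\mathcal{K}^{2}(x)\,dx\ll(X^{-1}\log X)^{2}$, which is \eqref{eq:4.5}.
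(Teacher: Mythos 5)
Your overall framework (product-to-sum expansion, diagonal/off-diagonal split, the first-derivative test $\int_X^{2X}\cos\big(2\pi\delta x^2\big)\,dx\ll (X|\delta|)^{-1}$, and above all the observation that $\tau^{\ast}(t)=t(1-t)$ vanishes at $t=0$, giving $|\mathfrak{a}^{\ast}_{H}(m,d;q)|\ll m^{-3/4}r_{2}(m,d;q)\,\sqrt{m}/H$) is sound, and your diagonal estimate is essentially correct: combined with $\sum_{m\leq N}r_{2}^{2}(m)\ll N\log N$ it gives $\sum_{\nu}|c_{\nu}|^{2}\ll H^{-1}\log H\ll X^{-2}\log X$ for $\mathcal{S}^{\ast}_{q,H}$, and $\ll H^{-1}$ for the $\mathcal{T}$-sums. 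Two smaller points are treated too casually, though both are repairable: the exact coincidences $d'^{2}m=d^{2}m'$ are not ``finitely structured'' — each primitive frequency carries an infinite family $(m,d)=(\ell (rn_{0})^{2},rd_{0})$, $r\geq1$, which must be grouped and absorbed using the geometric decay of $d^{-(q-3/2)}$ in $r$ (this is exactly the structure of the $\Xi_{H}$ sums in Lemma 2); and the sum-frequency terms are not automatic, since $\theta_{\nu}+\theta_{\nu'}$ can be as small as $\asymp H^{-1/2}$ (resp.\ $\asymp H^{-1}$ for the $\mathcal{T}$-sums), so they too need the coefficient decay, which does close that case.

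The genuine gap is exactly where you place it: the difference-frequency off-diagonal terms. Organizing by the resonance integer $\ell=d'^{2}m-d^{2}m'$ and noting the $1/|\ell|$ saving is only the setup; the required bound $\sum_{\nu\neq\nu'}|c_{\nu}c_{\nu'}|/|\theta_{\nu}-\theta_{\nu'}|\ll\log^{2}X$ then rests on uniform shifted-convolution/correlation estimates of the type $\sum_{m\asymp H^{2}}r_{2}(m)\,r_{2}\big((d'^{2}m-\ell)/d^{2}\big)$, restricted to arithmetic progressions and summed over $d,d',\ell$ against the coefficient decay and the $\tau^{\ast}$-smallness. None of this is carried out: the sentence ``carrying it through gives off-diagonal $\ll X^{-1}\log^{2}X$'' asserts precisely what has to be proved, and it is the only place where the stated bound $(X^{-1}\log X)^{2}$ — as opposed to a weaker $O(X^{-1}\log X)$ of the kind produced by Lemma 2 — can be won or lost. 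Note also that the present paper does not prove this lemma at all; it imports it verbatim from \cite{gath2019analogue} ($\S$4.2, Proposition 4.5), so there is no in-paper argument to compare against, and the Diophantine bookkeeping you defer is exactly the content of that cited proposition. As it stands, your proposal is a correct reduction plus an unproven core, not a proof.
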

\noindent
We immediately obtain:
\begin{prop}
For $X>0$ let $H=H(X)=X^{2}/2$. Then:
\begin{equation}\label{eq:4.6}
\begin{split}
\lim_{X\to\infty}\frac{1}{X}\bigintssss\limits_{ X}^{2X}\Big|\mathcal{E}_{q}(x)/x^{2q-1}-\mathcal{S}_{q,H}(x)\Big|^{2}\textit{d}x=0\,.
\end{split}
\end{equation}
\end{prop}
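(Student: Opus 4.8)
The plan is to derive \eqref{eq:4.6} directly from the approximate expression \eqref{eq:4.2} together with the mean-square estimate \eqref{eq:4.5}; essentially all of the analytic work has already been carried out in these two results, so what remains is to assemble them while keeping careful track of the parity of $q$ and of the exceptional terms that appear only when $q=3$.

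First I would start from \eqref{eq:4.2} and transpose $\mathcal{S}_{q,H}(x)$ to the left-hand side, obtaining
\[
\mathcal{E}_{q}(x)/x^{2q-1}-\mathcal{S}_{q,H}(x)=\mathcal{R}^{H}_{q}(x)+\mathds{1}_{q=3}\big\{\mathcal{T}^{H}_{q,\chi}(x)+\mathcal{T}^{H,\chi}_{q}(x)\big\}+O\big(x^{-1}\log^{2}{x}\big),
\]
valid for $X\leq x\leq 2X$ with $H=X^{2}/2$. Next I would apply the elementary inequality $\big(\sum_{i=1}^{k}a_{i}\big)^{2}\leq k\sum_{i=1}^{k}a_{i}^{2}$, with a fixed $k$ independent of $X$, so that it suffices to bound separately the mean-square over $[X,2X]$ of each summand on the right.

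To control the contribution of $\mathcal{R}^{H}_{q}(x)$ I would invoke the pointwise bounds \eqref{eq:4.3} (for $q\equiv 0\,(2)$) and \eqref{eq:4.4} (for $q\equiv 1\,(2)$), which dominate $\big|\mathcal{R}^{H}_{q}(x)\big|$ by $\mathcal{S}^{\ast}_{q,H}(x)$, plus the term $\mathds{1}_{q=3}\mathcal{T}^{H}_{q}(x)$ and a further $O\big(x^{-1}\log^{2}{x}\big)$. After a second application of the inequality above, the problem reduces to estimating the mean-square of the trigonometric sums $\mathcal{S}^{\ast}_{q,H}(x)$, $\mathcal{T}^{H}_{q,\chi}(x)$, $\mathcal{T}^{H,\chi}_{q}(x)$, $\mathcal{T}^{H}_{q}(x)$, each of which is $\ll(X^{-1}\log{X})^{2}$ by \eqref{eq:4.5}; note that squaring removes any sign ambiguity in these sums, so the majorant role of $\mathcal{S}^{\ast}_{q,H}$ in \eqref{eq:4.3}–\eqref{eq:4.4} is all that is needed. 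For the residual $O\big(x^{-1}\log^{2}{x}\big)$ terms I would use $x\asymp X$ and $\log x\asymp \log X$ throughout $[X,2X]$ to obtain
\[
\frac{1}{X}\int_{X}^{2X}\big(x^{-1}\log^{2}{x}\big)^{2}\,dx\ll\big(X^{-1}\log^{2}{X}\big)^{2}.
\]
Summing the finitely many contributions then gives a bound $\ll (X^{-1}\log{X})^{2}+(X^{-1}\log^{2}{X})^{2}$, which tends to $0$ as $X\to\infty$, establishing \eqref{eq:4.6}.

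Since the genuine content is entirely contained in the approximate expression \eqref{eq:4.2} and the mean-square estimate \eqref{eq:4.5}, I do not expect a serious obstacle here; the only points requiring care are bookkeeping ones — namely, ensuring that the case distinctions on the parity of $q$ and the appearance of the exceptional sums $\mathcal{T}^{H}_{q,\chi}$, $\mathcal{T}^{H,\chi}_{q}$, $\mathcal{T}^{H}_{q}$ only for $q=3$ are handled consistently, and confirming that the number of summands produced by the two applications of the Cauchy–Schwarz-type inequality stays bounded independently of $X$, so that the implied constants do not degrade.
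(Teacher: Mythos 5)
Your proposal is correct and follows essentially the same route as the paper: the paper's proof likewise combines Proposition 1 (the approximate expression \eqref{eq:4.2} with the bounds \eqref{eq:4.3}--\eqref{eq:4.4}) with Lemma 1 (the mean-square estimate \eqref{eq:4.5}), using Cauchy--Schwarz to handle the cross-terms — which is exactly your $\big(\sum_{i}a_{i}\big)^{2}\leq k\sum_{i}a_{i}^{2}$ step — to arrive at the bound $\ll X^{-2}\log^{4}X$ and then letting $X\to\infty$. Your write-up simply makes explicit the bookkeeping (parity of $q$, the $q=3$ exceptional sums, the trivial treatment of the $O(x^{-1}\log^{2}x)$ term) that the paper compresses into one sentence.
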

\begin{proof}
From Proposition 1 and Lemma 1 we obtain after applying Cauchy–Schwarz inequality to handle the cross-terms:
\begin{equation}\label{eq:4.7}
\begin{split}
\frac{1}{X}\bigintssss\limits_{ X}^{2X}\Big|\mathcal{E}_{q}(x)/x^{2q-1}-\mathcal{S}_{q,H}(x)\Big|^{2}\textit{d}x\ll X^{-2}\log^{4}X\,.
\end{split}
\end{equation}
Letting $X\to\infty$ concludes the proof.
\end{proof}
\noindent
Having disposed of the remainder terms in the approximate expression \eqref{eq:4.2}, our next objective is to reduce the number of terms appearing in $\mathcal{S}_{q,H}(x)$. We have the following lemma (see \cite{gath2019analogue}, $\S$\textit{4.1} \textit{Lemma 4.1}).
\begin{lem}
Let $X_{0}\geq1$ be a fixed parameter. Suppose that for $X>X_{0}$, there are arithmetical functions $\nu,\eta:\mathbb{N}\rightarrow\mathbb{R}$ and $\alpha,\beta:\mathbb{N}^{2}\rightarrow\mathbb{R}$, depending on the parameter $H=H(X)=X^{2}/2$, which satisfy the following two conditions:  
\begin{equation*}
\begin{split}
&C.1\quad |\nu(d)|,\,|\eta(d)|\leq w_{q}/d^{q-3/2}\,\mathds{1}_{d\leq\sqrt{H}}\\
&C.2\quad|\alpha(n,d)|,\,|\beta(n,d)|\leq C\frac{r_{2}\big(n,d;q\big)}{n^{3/4}}\mathds{1}_{n\leq2H^{2}}
\end{split}
\end{equation*}
where $C,\,w_{q}>0$ are absolute constants. For real $x>0$ define:
\begin{equation*}
\begin{split}
&\mathcal{J}^{\sin{}}_{q,H}\big(x;\nu,\alpha\big)=\sum_{d,n}\nu(d)\alpha\big(n,d\big)\sin{\Big(2\pi\frac{\sqrt{n}}{d}x^{2}-\frac{\pi}{4}\Big)}\\
&\mathcal{J}^{\cos{}}_{q,H}\big(x;\eta,\beta\big)=\sum_{d,n}\eta(d)\beta\big(n,d\big)\cos{\Big(2\pi\frac{\sqrt{n}}{d}x^{2}-\frac{\pi}{4}\Big)}\,.
\end{split}
\end{equation*}
Then for all large $X>X_{0}$ :
\begin{equation}\label{eq:4.8}
\begin{split}
&\frac{1}{X}\bigintssss\limits_{ X}^{2X}\bigg\{\mathcal{J}^{\sin{}}_{q,H}\big(x;\nu,\alpha\big)+\mathcal{J}^{\cos{}}_{q,H}\big(x;\eta,\beta\big)\bigg\}^{2}\textit{d}x=\frac{1}{2}\Big\{\Xi_{H}\big(\nu,\alpha\big)+\Xi_{H}\big(\eta,\beta\big)\Big\}+O\Big(X^{-1}\log{X}\Big)
\end{split}
\end{equation}
where:
\begin{equation*}
\begin{split}
&\Xi_{H}\big(\nu,\alpha\big)=\underset{(d,n)=1}{\sum_{\ell,d,n}}|\mu(\ell)|\bigg(\sum_{r}\nu(rd)\alpha\big((rn)^{2}\ell,rd\big)\bigg)^{2}\quad;\quad\Xi_{H}\big(\eta,\beta\big)=\underset{(d,n)=1}{\sum_{\ell,d,n}}|\mu(\ell)|\bigg(\sum_{r}\eta(rd)\beta\big((rn)^{2}\ell,rd\big)\bigg)^{2}
\end{split}
\end{equation*}
and the implied constant depends only on the absolute constants in conditions C.1 and C.2
\end{lem}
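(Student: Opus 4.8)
The plan is to evaluate the mean square in \eqref{eq:4.8} directly, by squaring the integrand and reducing every product of two trigonometric factors to a single oscillatory phase whose average over $[X,2X]$ is explicitly controlled. Abbreviate a generic coefficient product by $c(n,d)$, where $c$ denotes either $\nu(d)\alpha(n,d)$ or $\eta(d)\beta(n,d)$; by C.1 and C.2 every such product obeys $|c(n,d)|\ll r_2(n,d;q)\,d^{-(q-3/2)}n^{-3/4}$ with $d\le\sqrt H$, $n\le 2H^2$. Expanding $\{\mathcal{J}^{\sin}_{q,H}+\mathcal{J}^{\cos}_{q,H}\}^{2}$ and applying the product-to-sum identities $\sin A\sin B=\tfrac12[\cos(A-B)-\cos(A+B)]$, $\cos A\cos B=\tfrac12[\cos(A-B)+\cos(A+B)]$ and $\sin A\cos B=\tfrac12[\sin(A+B)+\sin(A-B)]$, each pair of indices produces a phase of frequency either $\theta=\tfrac{\sqrt{n_1}}{d_1}-\tfrac{\sqrt{n_2}}{d_2}$ or $\tfrac{\sqrt{n_1}}{d_1}+\tfrac{\sqrt{n_2}}{d_2}$. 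The basic analytic input is the first-derivative (van der Corput) estimate: since the phase derivative $4\pi\theta x$ is monotone and $\gg|\theta|X$ on $[X,2X]$,
\[
\frac{1}{X}\bigintssss\limits_{X}^{2X}\cos\bigl(2\pi\theta x^{2}\bigr)\,dx\ll\frac{1}{X^{2}|\theta|}\qquad(\theta\neq0),
\]
with the same bound for the sine, whereas for $\theta=0$ the mean equals $1$ (cosine) or $0$ (sine). Thus $\tfrac1X\int_X^{2X}\cos(2\pi\theta x^2)\,dx$ acts as an approximate indicator of $\theta=0$.

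The main term comes from the diagonal, i.e.\ the index pairs with $\tfrac{\sqrt{n_1}}{d_1}=\tfrac{\sqrt{n_2}}{d_2}$; all sum-frequencies are bounded away from $0$ and feed only the error. To identify the diagonal I would group the indices by the common value of the frequency: writing $n=s^{2}\ell$ with $\ell$ squarefree and reducing $s/d$ to lowest terms $n'/d'$ shows that the distinct frequencies are exactly $\tfrac{n'\sqrt{\ell}}{d'}$ with $\ell$ squarefree and $(n',d')=1$, and that the indices sharing such a frequency are precisely $(n,d)=\bigl((rn')^{2}\ell,\,rd'\bigr)$, $r\ge1$. Collecting coefficients over each such fibre turns the diagonal part of $\tfrac1X\int(\mathcal{J}^{\sin}_{q,H})^{2}$ into $\tfrac12\Xi_{H}(\nu,\alpha)$ and that of $\tfrac1X\int(\mathcal{J}^{\cos}_{q,H})^{2}$ into $\tfrac12\Xi_{H}(\eta,\beta)$, with the squarefree constraint encoded by $|\mu(\ell)|$, matching \eqref{eq:4.8}. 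The cross term $2\mathcal{J}^{\sin}_{q,H}\mathcal{J}^{\cos}_{q,H}$ contributes nothing to the main term because, for a matched frequency, the non-oscillatory part of $\sin A\cos B$ is $\tfrac12\sin(A-B)=\tfrac12\sin0=0$. This produces exactly the stated main term $\tfrac12\{\Xi_{H}(\nu,\alpha)+\Xi_{H}(\eta,\beta)\}$.

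The heart of the matter, and the step I expect to be the main obstacle, is showing that the off-diagonal contribution, together with all sum-frequency terms, is $O(X^{-1}\log X)$. By the derivative estimate above this is
\[
\ll\frac{1}{X^{2}}\underset{\sqrt{n_1}/d_1\neq\sqrt{n_2}/d_2}{\sum_{n_1,d_1,n_2,d_2}}\frac{|c(n_1,d_1)|\,|c(n_2,d_2)|}{\bigl|\tfrac{\sqrt{n_1}}{d_1}-\tfrac{\sqrt{n_2}}{d_2}\bigr|},
\]
and the difficulty is that the naive separation bound $\bigl|\tfrac{\sqrt{n_1}}{d_1}-\tfrac{\sqrt{n_2}}{d_2}\bigr|\gg H^{-5/2}$ is hopelessly weak: it would give $O(X^{3})$ per term after the factor $X^{-2}$. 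The way to proceed is to rationalise,
\[
\frac{\sqrt{n_1}}{d_1}-\frac{\sqrt{n_2}}{d_2}=\frac{d_2^{2}n_1-d_1^{2}n_2}{d_1d_2\bigl(d_2\sqrt{n_1}+d_1\sqrt{n_2}\bigr)},
\]
and to organise the quadruple sum by the nonzero integer $k=d_2^{2}n_1-d_1^{2}n_2$, exploiting that $|k|$ is typically large rather than equal to $1$. For fixed $d_1,d_2$ the admissible $n_1$ producing a given $k$ lie in a single residue class modulo $d_2^{2}$, so the inner count reduces to estimating the weighted representation sums $r_2(\cdot,d_1;q)\,r_2(\cdot,d_2;q)$ along arithmetic progressions, which one controls by convexity and divisor bounds for $r_2(\cdot,\cdot;q)$ together with the weights $n^{-3/4}$ of C.2. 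Summing the resulting estimate over $1\le|k|$ gives the harmonic sum responsible for the factor $\log X$, while the outer sums over $d_1,d_2$ converge because $q\ge3$ keeps the relevant $d$-exponents summable, the case $q=3$ being borderline. Carrying out this counting yields the claimed $O(X^{-1}\log X)$; this is the technical content of Lemma~4.1 of \cite{gath2019analogue}, to which I would appeal for the full execution.
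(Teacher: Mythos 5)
The first thing to note is that this paper never proves the statement at all: the lemma is imported verbatim from the companion paper (\cite{gath2019analogue}, \S4.1, Lemma 4.1) and used as a black box, so there is no internal argument to compare yours against. This also means that your closing move — appealing to Lemma 4.1 of \cite{gath2019analogue} ``for the full execution'' — is not an appeal to an auxiliary ingredient but to the very statement under proof; as a blind, self-contained proof, your text has to stand on what you actually carry out.

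Judged on that basis, your skeleton is the right one and the main term is handled completely and correctly: writing $n=s^{2}\ell$ with $\ell$ squarefree and $s/d=n'/d'$ in lowest terms, the fibre of a fixed frequency $n'\sqrt{\ell}/d'$ is exactly $\big\{((rn')^{2}\ell,\,rd')\big\}_{r\geq1}$, each matched pair contributes $\tfrac{1}{2}$, the $\sin$--$\cos$ cross terms vanish on the diagonal, and collecting coefficients gives precisely $\tfrac{1}{2}\{\Xi_{H}(\nu,\alpha)+\Xi_{H}(\eta,\beta)\}$ with $|\mu(\ell)|$ encoding squarefreeness; the first-derivative bound $\frac{1}{X}\int_{X}^{2X}\cos(2\pi\theta x^{2})\,dx\ll X^{-2}|\theta|^{-1}$ is also correct, and it disposes of the sum-frequency terms. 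The genuine gap is the off-diagonal estimate, which you diagnose correctly but do not execute, and two points there are not cosmetic. First, a slip: the constraint $d_{2}^{2}n_{1}-d_{1}^{2}n_{2}=k$ places $n_{1}$ in a residue class modulo $d_{1}^{2}/(d_{1}^{2},d_{2}^{2})$ (equivalently $n_{2}$ modulo $d_{2}^{2}/(d_{1}^{2},d_{2}^{2})$), not modulo $d_{2}^{2}$. Second, and more seriously, for $q=3$ the bounds C.1 and C.2 alone do not make the $d$-sums converge after rationalisation: the factor $d_{1}d_{2}(d_{2}\sqrt{n_{1}}+d_{1}\sqrt{n_{2}})$ converts $d_{2}^{-(q-3/2)}$ into a \emph{growing} power of $d_{2}$, so the argument cannot close without quantifying the average decay in $d$ of $r_{2}(\cdot,d;q)$ (the congruence $b\equiv0\ (d)$ thins representations by roughly a factor $1/d$) along the arithmetic progressions forced by $k$. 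Your phrase ``convexity and divisor bounds'' names the right phenomenon, but that counting is where the entire difficulty of the lemma sits, and it is left undone. In short: correct approach, complete main-term derivation, but the error term remains a sketch whose hardest step is outsourced to the statement itself — which, in fairness, is no less than what the paper's own treatment (a bare citation) provides.
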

\noindent
With the aid of Lemma 2 we prove the following proposition:
\begin{prop}
For $X>0$ let $H=H(X)=X^{2}/2$. Then:
\begin{equation}\label{eq:4.9}
\lim_{M\to\infty}\limsup_{X\to\infty}\frac{1}{X}\bigintssss\limits_{X}^{2X}\Big|\mathcal{S}_{q,H}(x)-\mathcal{S}^{M}_{q,H}(x)\Big|^{2}\textit{d}x=0
\end{equation}
where for $q\equiv0\,(2)$:
\begin{equation*}
\begin{split}
&\mathcal{S}^{M}_{q,H}(x)=2\varrho_{q}\underset{d\leq\sqrt{H}}{\sum_{m\leq M}}\,\frac{\xi\big(d;q\big)\mathfrak{a}_{H}\big(m,d;q\big)}{d^{q-3/2}}\sin{\Big(2\pi \frac{\sqrt{m}}{d}x^{2}-\frac{\pi}{4}\Big)}
\end{split}
\end{equation*}
and for $q\equiv1\,(2)$:
\begin{equation*}
\begin{split}
\mathcal{S}_{q,H}(x)=&2^{q}\varrho_{\chi,q}\underset{d\leq\sqrt{H}}{\sum_{m\leq M}}\,\frac{\chi(d)\mathfrak{a}_{H}\big(m,d;q\big)}{d^{q-3/2}}\sin{\Big(2\pi \frac{\sqrt{m}}{d}x^{2}-\frac{\pi}{4}\Big)}+\\
&+(-1)^{\frac{q-1}{2}}2^{2q-1}\varrho_{\chi,q}\underset{\,d\equiv0\,(4)}{\underset{d\leq\sqrt{H}}{\sum_{m\leq M}}}\,\frac{\mathfrak{a}_{H,\chi}\big(m,d;q\big)}{d^{q-3/2}}\cos{\Big(2\pi \frac{\sqrt{m}}{d}x^{2}-\frac{\pi}{4}\Big)}\,.
\end{split}
\end{equation*}
\end{prop}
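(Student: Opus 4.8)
The plan is to recognise the tail $\mathcal{S}_{q,H}(x)-\mathcal{S}^{M}_{q,H}(x)$ as a trigonometric sum of exactly the shape treated in Lemma 2, to apply that lemma to convert its mean-square into the arithmetic functionals $\Xi_{H}$, and then to bound $\Xi_{H}$ by the tail of a single convergent series that is independent of $H$. Since Lemma 2 supplies an $O\big(X^{-1}\log X\big)$ remainder that vanishes as $X\to\infty$, the entire problem is thereby reduced to an arithmetic convergence statement in which the hypothesis $q\ge 3$ does the essential work.

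First I would match the coefficients. In the even case $q\equiv 0\,(2)$ the difference is
\[
\mathcal{S}_{q,H}(x)-\mathcal{S}^{M}_{q,H}(x)=2\varrho_{q}\underset{M<m\leq 2H^{2}}{\sum_{d\leq\sqrt{H}}}\frac{\xi\big(d;q\big)\mathfrak{a}_{H}\big(m,d;q\big)}{d^{q-3/2}}\sin{\Big(2\pi\tfrac{\sqrt{m}}{d}x^{2}-\tfrac{\pi}{4}\Big)},
\]
so I would set $\nu(d)=\xi(d;q)\,d^{-(q-3/2)}\mathds{1}_{d\leq\sqrt{H}}$ and $\alpha(m,d)=2\varrho_{q}\,\mathfrak{a}_{H}(m,d;q)\mathds{1}_{m>M}$, with $\eta=\beta=0$. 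Condition C.1 holds because $\xi(d;q)$ is bounded by a ($q$-dependent, hence absolute) constant, and C.2 follows directly from Remark 1, the extra factor $\mathds{1}_{m>M}$ only shrinking the coefficient. The odd case $q\equiv 1\,(2)$ is identical but carries both a sine coefficient built from $\chi(d)\mathfrak{a}_{H}$ and a cosine coefficient built from $\mathfrak{a}_{H,\chi}$; the latter is again dominated by $\mathfrak{a}_{H}$ via Remark 1, so both C.1 and C.2 hold with $q$-dependent constants.

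Lemma 2 then yields
\[
\frac{1}{X}\bigintssss_{X}^{2X}\big|\mathcal{S}_{q,H}-\mathcal{S}^{M}_{q,H}\big|^{2}\textit{d}x=\tfrac{1}{2}\big(\Xi_{H}(\nu,\alpha)+\Xi_{H}(\eta,\beta)\big)+O\big(X^{-1}\log X\big),
\]
and letting $X\to\infty$ kills the remainder, so it suffices to prove $\lim_{M\to\infty}\limsup_{X\to\infty}\Xi_{H}(\nu,\alpha)=0$ and likewise for $(\eta,\beta)$. Substituting C.1 and Remark 1 into the defining formula for $\Xi_{H}$ and discarding the cutoffs $d\leq\sqrt{H}$, $m\leq 2H^{2}$ (which only enlarge the sum) gives the $H$-free majorant
\[
\Xi_{H}(\nu,\alpha)\ll\underset{(d,n)=1}{\sum_{\ell,d,n}}|\mu(\ell)|\Big(\sum_{r}\frac{r_{2}\big((rn)^{2}\ell,rd;q\big)}{(rd)^{q-3/2}\big((rn)^{2}\ell\big)^{3/4}}\mathds{1}_{(rn)^{2}\ell>M}\Big)^{2}=:\Sigma_{M},
\]
whence $\limsup_{X\to\infty}\Xi_{H}(\nu,\alpha)\leq\Sigma_{M}$, uniformly in $X$. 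It remains to show $\Sigma_{M}\to 0$ as $M\to\infty$, which follows from the tail-of-a-convergent-series principle once $\Sigma_{0}<\infty$.

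The main obstacle is precisely the convergence of $\Sigma_{0}$. I would establish it with the crude bound $r_{2}(m,d;q)\leq r_{2}(m)\ll_{\epsilon}m^{\epsilon}$: the inner $r$-sum then converges like $\sum_{r}r^{2\epsilon-q}$ (finite for $q\ge 3$), after which the outer triple sum is $\ll\sum_{\ell,d,n}n^{4\epsilon-3}\ell^{2\epsilon-3/2}d^{-(2q-3)}$, whose $d$-sum converges because $2q-3>1$, whose $n$-sum converges because $3-4\epsilon>1$, and whose $\ell$-sum converges because $\tfrac{3}{2}-2\epsilon>1$, all for sufficiently small $\epsilon$. Thus $\Sigma_{0}<\infty$ and $\Sigma_{M}\to0$, which completes the reduction; the odd case is handled verbatim for the $(\eta,\beta)$-term. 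This is exactly the family of arithmetic series controlling the second-moment asymptotics \eqref{eq:1.3}--\eqref{eq:1.4} of \cite{gath2019analogue}, so its finiteness may alternatively be inherited from that analysis rather than re-derived.
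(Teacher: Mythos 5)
Your proposal is correct and follows essentially the same route as the paper: the identical reduction via Lemma 2 (with the same choices of $\nu,\alpha,\eta,\beta$ up to where the constants $2\varrho_{q}$, $2^{q}\varrho_{\chi,q}$ are placed), followed by bounding $\Xi_{H}$ by an $H$-independent tail that vanishes as $M\to\infty$. The only difference is in the final arithmetic estimate, and it is cosmetic: the paper bounds the tail quantitatively by $M^{-1/2}\log 2M$ using $r_{2}\big((rm)^{2}\ell,rd;q\big)\leq r_{2}\big(m^{2}\ell\big)$ and the convergence of $\sum_{n}r_{2}^{2}(n)/n^{3/2}$, whereas you settle for the qualitative statement $\Sigma_{M}\to0$ via the divisor-type bound $r_{2}(n)\ll_{\epsilon}n^{\epsilon}$ --- both suffice for the proposition.
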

\begin{proof} 
Fix an integer $M\geq1$. We refer to Lemma 2 with $X_{0}=2M$. Let $X>X_{0}$, $H=H(X)=X^{2}/2$. For $q\equiv0\,(2)$ we define:
\begin{equation*}
\begin{split}
&\nu(d)=2\varrho_{q}\frac{\xi\big(d;q\big)}{d^{q-3/2}}\mathds{1}_{d\leq\sqrt{H}}\quad\quad\quad\quad\quad;\quad\alpha(m,d)=\mathfrak{a}_{H}\big(m,d;q\big)\mathds{1}_{m>M}\\
&\eta(d)=0\qquad\qquad\qquad\qquad\qquad\qquad\,;\quad\beta(m,d)=0\,.
\end{split}
\end{equation*}
For $q\equiv1\,(2)$ we define:
\begin{equation*}
\begin{split}
&\nu(d)=2^{q}\varrho_{\chi,q}\frac{\chi(d)}{d^{q-3/2}}\mathds{1}_{d\leq\sqrt{H}}\quad\quad\quad\,\,\,\,\,\,;\quad\alpha(m,d)=\mathfrak{a}_{H}\big(m,d;q\big)\mathds{1}_{m>M}\\
&\eta(d)=\frac{(-1)^{\frac{q-1}{2}}2^{2q-1}}{d^{q-3/2}}\mathds{1}_{d\equiv0\,(4)}\mathds{1}_{d\leq\sqrt{H}}\quad;\quad\beta(m,d)=\mathfrak{a}_{H,\chi}\big(m,d;q\big)\mathds{1}_{m>M}
\end{split}
\end{equation*}
With the above definitions we have for $X<x<2X$:
\begin{equation*}
\mathcal{S}_{q,H}(x)-\mathcal{S}^{M}_{q,H}(x)=\mathcal{J}^{\sin{}}_{q,H}\big(x;\nu,\alpha\big)+\mathcal{J}^{\cos{}}_{q,H}\big(x;\eta,\beta\big)\,.
\end{equation*}
Conditions C.1 and C.2 are clearly satisfied. Let us first estimate $\Xi_{H}\big(\nu,\alpha\big)$ and $\Xi_{H}\big(\eta,\beta\big)$. For integers $m,\ell,r,d\geq1$ we have:
\begin{equation*}
\begin{split}
&|\alpha\big((rm)^{2}\ell,rd\big)|,\,|\alpha\big((rm)^{2}\ell,rd\big)|\leq c\frac{r_{2}\big((rm)^{2}\ell,rd;q\big)}{(m^{2}\ell)^{3/4}r^{3/2}}\mathds{1}_{M<m^{2}\ell r^{2}\leq2H^{2}}\,\,; c>0 \textit{ an absolute constant}\\
&r_{2}\big((rm)^{2}\ell,rd;q\big)=r_{2}\big(m^{2}\ell,d;q\big)\leq r_{2}\big(m^{2}\ell\big)\,.
\end{split}
\end{equation*}
Thus:
\begin{equation}\label{eq:4.10}
\begin{split}
0\leq\Xi_{H}\big(\nu,\alpha\big)+ \Xi_{H}\big(\eta,\beta\big)\ll
\sum_{n}\frac{r^{2}_{2}(n)}{n^{3/2}}\Bigg(\,\,\sum_{nr^{2}>M}\frac{1}{r^{q}}\Bigg)^{2}\ll M^{-1/2}\log{2M}\,.
\end{split}
\end{equation}
Taking $\limsup$ we obtain from \eqref{eq:4.8} in Lemma $2$:
\begin{equation}\label{eq:4.11}
\limsup_{X\to\infty}\frac{1}{X}\bigintssss\limits_{X}^{2X}\Big|\mathcal{S}_{q,H}(x)-\mathcal{S}^{M}_{q,H}(x)\Big|^{2}\textit{d}x\ll M^{-1/2}\log{2M}\,.
\end{equation}
Letting $M\to\infty$ concludes the proof.
\end{proof}
\subsection{Proof of Theorem 4} Before presenting the proof of Theorem 4, we need the following lemma (see \cite{gath2019analogue}, $\S$\textit{4.3} \textit{Lemma 4.5}) which shows that for integers $m\geq1$ which are of moderate size with respect to the parameter $H$, $m^{3/4}\mathfrak{a}_{H}\big(m,d;q\big)$ and $m^{3/4}\mathfrak{a}_{H,\chi}\big(m,d;q\big)$ can be approximated by $r_{2}\big(m,d;q\big)$ and $r_{2,\chi}\big(m,d;q\big)$ with a reasonable error.
\begin{lem}
Let $H\geq1$. Suppose the $1\leq d\leq H$ and $1\leq m\leq Y$ for some $Y\leq H^{2}$, then:
\begin{equation}\label{eq:4.12}
\mathfrak{a}_{H}\big(m,d;q\big)=\frac{r_{2}\big(m,d;q\big)}{4\pi m^{3/4}}+O\Bigg(\frac{r_{2}\big(m,d;q\big)Y}{m^{3/4}H^{2}}\Bigg)
\end{equation}
\begin{equation}\label{eq:4.13}
\mathfrak{a}_{H,\chi}\big(m,d;q\big)=-\frac{r_{2,\chi}\big(m,d;q\big)}{2\pi m^{3/4}}+O\bigg(\frac{r_{2}\big(m,d;q\big)Y}{m^{3/4}H^{2}}\bigg)
\end{equation}
where the implied constant is absolute.
\end{lem}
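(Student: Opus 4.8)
The plan is to replace the weight $\tau$ (and, in the twisted case, the character-twisted weight) by its value $\pi^{-1}$ at the origin, and then to match the resulting sums against $r_{2}(m,d;q)$ and $r_{2,\chi}(m,d;q)$ by exploiting the sign- and ordering-symmetries of representations of $m$ as a sum of two squares. The point is that $\mathfrak{a}_{H}(m,d;q)$ is, up to the slowly varying factor $\tau$, exactly the restriction of (a quarter of) $r_{2}(m,d;q)$ to the fundamental domain $0\le n\le h$.

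First I observe that the truncation is vacuous in the stated range. Since $1\le m\le Y\le H^{2}$, every representation $n^{2}+h^{2}=m$ with $0\le n\le h$ automatically satisfies $h\le\sqrt{m}\le\sqrt{Y}\le H$, so the condition $1\le h\le H$ removes no terms and the two inner sums run over all such representations. Both arguments of $\tau$, namely $h/([H]+1)$ and $h/(d[H/d]+d)$, lie in $(0,1]$; for the second one uses $d[H/d]+d\ge H\ge h$. Hence Remark~1 applies, giving $\tau(t)=\pi^{-1}+O(t^{2})$ uniformly, and since each argument is $O(h/H)=O(\sqrt{m}/H)$, replacing $\tau$ by $\pi^{-1}$ costs $O(m/H^{2})$ per term.

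Next I isolate the main term. After the replacement, the bracketed expression in the definition of $\mathfrak{a}_{H}$ equals $\pi^{-1}$ times the octant sum $\sum''(h/\sqrt{m})^{q-1}$ over $n\equiv0\,(d)$ plus $\sum''(n/\sqrt{m})^{q-1}$ over $h\equiv0\,(d)$, the summation running over $0\le n\le h$ with $n^{2}+h^{2}=m$. The key combinatorial identity is that this octant sum equals $\tfrac14 r_{2}(m,d;q)$. To see this, a pair $(n_{0},h_{0})$ with $0<n_{0}<h_{0}$ generates in $\mathbb{Z}^{2}$ the orbit $(\pm n_{0},\pm h_{0}),(\pm h_{0},\pm n_{0})$; the four points with $|a|=h_{0}$ satisfy the divisibility $d\mid b$ precisely when $d\mid n_{0}$ and then contribute $4(h_{0}/\sqrt{m})^{q-1}$ to $r_{2}(m,d;q)$, matching the single first-sum term of weight $(h_{0}/\sqrt{m})^{q-1}$, and symmetrically the four points with $|a|=n_{0}$ match the second-sum term. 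The double-dash halving of the terms $n=0$ and $n=h$ exactly compensates the collapse of the orbit to four points in these degenerate cases (using $q\ge3$, so the weight $(0)^{q-1}$ at $n=0$ vanishes). Multiplying by $\pi^{-1}m^{-3/4}$ therefore yields the main term $r_{2}(m,d;q)/(4\pi m^{3/4})$.

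Finally, for the error I sum the per-term cost $O(m/H^{2})$ against the weights and invoke $\sum''(h/\sqrt{m})^{q-1}\ll r_{2}(m,d;q)$, which is the content of the bound $\mathfrak{a}_{H}(m,d;q)\ll r_{2}(m,d;q)/m^{3/4}$ in Remark~1; the total error is $O\big(m\,r_{2}(m,d;q)/(m^{3/4}H^{2})\big)$, and using $m\le Y$ this is $O\big(r_{2}(m,d;q)\,Y/(m^{3/4}H^{2})\big)$, as claimed. The twisted statement follows by the identical argument with two bookkeeping changes: the prefactor $2$ in $\mathfrak{a}_{H,\chi}$, and the relation $\chi(-h)=\chi(-1)\chi(h)=-\chi(h)$ (as $\chi$ is the nontrivial character mod $4$), which together turn the octant sum into $-\tfrac14 r_{2,\chi}(m,d;q)$ and produce the main term $-r_{2,\chi}(m,d;q)/(2\pi m^{3/4})$; the error is dominated by the same untwisted bound since $|\chi|\le1$. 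I expect the main obstacle to be the careful verification of the combinatorial identity at the degenerate representations $n=0$ and $n=h$, where the double-dash convention, the sign symmetries, and (in the twisted case) the support of $\chi$ on odd integers must be reconciled in order to recover the exact constants $\tfrac1{4\pi}$ and $-\tfrac1{2\pi}$.
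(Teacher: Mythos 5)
Your proof is correct. Note that the paper itself does not prove this lemma at all: it is quoted verbatim from the companion paper \cite{gath2019analogue} ($\S$4.3, Lemma 4.5), so there is no internal argument to compare against. Your argument is the natural one suggested by the surrounding definitions and Remark 1, and it is complete: the truncation $1\leq h\leq H$ is indeed vacuous for $m\leq Y\leq H^{2}$; the uniform expansion $\tau(t)=\pi^{-1}+O(t^{2})$ with $t\ll\sqrt{m}/H$ gives the per-term cost $O(m/H^{2})\leq O(Y/H^{2})$; and the eightfold-symmetry bookkeeping (including the double-dash halving at $n=0$ and $n=h$, and the diagonal term being shared by both inner sums) does recover exactly $\tfrac{1}{4}r_{2}(m,d;q)$, hence the constants $\tfrac{1}{4\pi}$ and, with the prefactor $2$ and $\chi(-1)=-1$, the constant $-\tfrac{1}{2\pi}$ in the twisted case.
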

\noindent
We now proceed to present the proof of Theorem 4.
\begin{proof}(Theorem 4.)
By Proposition 2 and Proposition 3 it suffices to show that:
\begin{equation}\label{eq:4.14}
\lim_{M\to\infty}\limsup_{X\to\infty}\frac{1}{X}\bigintssss\limits_{X}^{2X}\Big|\mathcal{S}^{M}_{q,H}(x)-\sum_{m\leq M}\upphi_{q,m}\big(\gamma_{m}x^{2}\big)\Big|^{2}\textit{d}x=0
\end{equation}
%
%
%
%
%
%
%
%
%
%
Fix an integer $M\geq1$. We refer to Lemma 2 with $X_{0}=2M$. Define:
\begin{equation*}
\mathcal{A}_{M}=\big\{M<n\leq M^{4}\,:\,n=mk^{2}\textit{ with } m>M\textit{ and }\mu^{2}(m)=1\big\}\,.
\end{equation*}
Let $X>X_{0}$, $H=H(X)=X^{2}/2$. For $q\equiv0\,(2)$ we define: 
\begin{equation*}
\begin{split}
&\nu(d)=\frac{\varrho_{q}}{2\pi}\frac{\xi\big(d;q\big)}{d^{q-3/2}}\mathds{1}_{d\leq\sqrt{H}}\qquad\qquad\qquad\qquad\quad\,\,\,\,\,;\quad\alpha(n,d)=\frac{r_{2}\big(n,d;q\big)}{n^{3/4}}\mathds{1}_{n\in\mathcal{A}_{M}}\\
&\eta(d)=0\qquad\qquad\qquad\qquad\qquad\qquad\qquad\qquad\,\,\,\,;\quad\beta(n,d)=0\,.
\end{split}
\end{equation*}
For $q\equiv1\,(2)$ we define:
\begin{equation*}
\begin{split}
&\nu(d)=2^{q-2}\frac{\varrho_{\chi,q}}{\pi}\frac{\chi(d)}{d^{q-3/2}}\mathds{1}_{d\leq\sqrt{H}}\qquad\qquad\qquad\quad\,\,\,\,;\quad\alpha(n,d)=\frac{r_{2}\big(n,d;q\big)}{n^{3/4}}\mathds{1}_{n\in\mathcal{A}_{M}}\\
&\eta(d)=(-1)^{\frac{q+1}{2}}2^{2q-2}\frac{\varrho_{\chi,q}}{\pi}\frac{1}{d^{q-3/2}}\mathds{1}_{d\equiv0\,(4)}\mathds{1}_{d\leq\sqrt{H}}\quad;\quad\beta(n,d)=\frac{r_{2,\chi}\big(n,d;q\big)}{n^{3/4}}\mathds{1}_{n\in\mathcal{A}_{M}}
\end{split}
\end{equation*}
Let $X<x<2X$. By Lemma 3 we have for $q\equiv0\,(2)$:
\begin{equation}\label{eq:4.15}
\begin{split}
&\mathcal{S}^{M}_{q,H}(x)-\sum_{m\leq M}\upphi_{q,m}\big(\gamma_{m}x^{2}\big)=\frac{\varrho_{q}}{2\pi}\underset{d\leq\sqrt{H}}{\sum_{m\leq M}}\,\frac{\xi\big(d;q\big)r_{2}\big(m,d;q\big)}{d^{q-3/2}m^{3/4}}\sin{\Big(2\pi \frac{\sqrt{m}}{d}x^{2}-\frac{\pi}{4}\Big)}-\\
&-\sum_{m\leq M}\upphi_{q,m}\big(\gamma_{m}x^{2}\big)+O\Big(M^{5/4}X^{-4}\Big)=
\frac{\varrho_{q}}{2\pi}\underset{d\leq\sqrt{H}}{\sum_{m\leq M}}\,\frac{\xi\big(d;q\big)r_{2}\big(m,d;q\big)}{d^{q-3/2}m^{3/4}}\sin{\Big(2\pi \frac{\sqrt{m}}{d}x^{2}-\frac{\pi}{4}\Big)}-\\
&-\frac{\varrho_{q}}{2\pi}\underset{d\leq\sqrt{H}}{\underset{m\leq M}{\sum_{mk^{2}\leq M^{4}}}}\,\mu^{2}(m)\frac{\xi\big(d;q\big)r_{2}\big(mk^{2},d;q\big)}{d^{q-3/2}\big(mk^{2}\big)^{3/4}}\sin{\Big(2\pi\frac{\sqrt{mk^{2}}}{d}x^{2}-\frac{\pi}{4}\Big)}+O\Big(M^{5/4}X^{-1/2}+M^{-1/2}\Big(\log{2M}\Big)^{3}\Big)=\\
&=\mathcal{J}^{\sin{}}_{q,H}\big(x;\nu,\alpha\big)+\mathcal{J}^{\cos{}}_{q,H}\big(x;\eta,\beta\big)+O\Big(M^{5/4}X^{-1/2}+M^{-1/2}\Big(\log{2M}\Big)^{3}\Big)\,.
\end{split}
\end{equation}
By Lemma 3 for $q\equiv1\,(2)$, the same arguments give:
\begin{equation}\label{eq:4.16}
\begin{split}
&\mathcal{S}^{M}_{q,H}(x)-\sum_{m\leq M}\upphi_{q,m}\big(\gamma_{m}x^{2}\big)=\mathcal{J}^{\sin{}}_{q,H}\big(x;\nu,\alpha\big)+\mathcal{J}^{\cos{}}_{q,H}\big(x;\eta,\beta\big)+O\Big(M^{5/4}X^{-1/2}+M^{-1/2}\big(\log{2M}\big)^{3}\Big)\,.
\end{split}
\end{equation}
From the definition of $\mathcal{A}_{M}$ it follows that:
\begin{equation*}
0\leq\Xi_{H}\big(\nu,\alpha\big)+\Xi_{H}\big(\eta,\beta\big)
\ll\sum_{n>M}r^{2}_{2}(n)/n^{3/2}\ll M^{-1/2}\log{2M}
\end{equation*}
and so by \eqref{eq:4.8} in Lemma 2 we have:
\begin{equation}\label{eq:4.17}
\begin{split}
&\limsup_{X\to\infty}\frac{1}{X}\bigintssss\limits_{X}^{2X}\Big|\mathcal{J}^{\sin{}}_{q,H}\big(x;\nu,\alpha\big)+\mathcal{J}^{\cos{}}_{q,H}\big(x;\eta,\beta\big)\Big|^{2}\textit{d}x\ll M^{-1/2}\log{2M}\,.
\end{split}
\end{equation}
Applying Cauchy–Schwarz inequality to handle the cross-terms we conclude from \eqref{eq:4.15}, \eqref{eq:4.16} and \eqref{eq:4.17} that:
\begin{equation}\label{eq:4.18}
\limsup_{X\to\infty}\frac{1}{X}\bigintssss\limits_{X}^{2X}\Big|\mathcal{S}^{M}_{q,H}(x)-\sum_{m\leq M}\upphi_{q,m}\big(\gamma_{m}x^{2}\big)\Big|^{2}\textit{d}x\ll M^{-1/2}\log{2M}\,.
\end{equation}
Letting $M\to\infty$ concludes the proof.
\end{proof}
\section{The probability density $\mathcal{P}_{q}(\alpha)$}
\subsection{Statement of Theorem 5 \& 6}
In this section we construct the probability density. The theorems we shall set out to prove are: 
\begin{thm}
Let $\alpha\in\mathbb{C}$. Then the limit:
\begin{equation}\label{eq:5.1}
\Phi_{q}(\alpha)\overset{\textit{def}}{=}\lim_{M\to\infty}\lim_{X\to\infty}\frac{1}{X}\bigintssss\limits_{X}^{2X}\exp{\bigg(2\pi i\alpha\sum_{m\leq M}\upphi_{q,m}\big(\gamma_{m}x^{2}\big)\bigg)}\textit{d}x
\end{equation}
exists, and defines an entire function of $\alpha$. It is given by:
\begin{equation*}
\Phi_{q}(\alpha)=\prod_{m=1}^{\infty}\mathcal{L}(\alpha,m)\quad;\quad\mathcal{L}(\alpha,m)=\lim_{n\to\infty}\frac{1}{n!}\bigintssss\limits_{0}^{n!}\exp{\Big(2\pi i\alpha\upphi_{q,m}(t)\Big)}\textit{d}t\,.
\end{equation*}
where $\mathcal{L}(\alpha,m)$ are entire functions of $\alpha$, and the infinite product converges absolutely and uniformly
on any compact subset of the plane. For large $|\alpha|$, $\alpha=\sigma+i\tau$, $\Phi_{q}(\alpha)$ satisfies the bound:
\begin{equation}\label{eq:5.2}
\begin{split}
\big|\Phi_{q}(\alpha)\big|\leq\exp{\bigg(-\Big(C^{-1}_{q}\sigma^{2}-C_{q}\tau^{2}\Big)\digamma^{-1/2}\big(|\alpha|\big)\log{|\alpha|}+C_{q}|\tau|\digamma^{1/4}\big(|\alpha|\big)\bigg)}
\end{split}
\end{equation}
where $C_{q}>1$ is constant, and $\digamma(x)=x^{\frac{1}{3/4-c/\log\log{x}}}$ with $c>0$ an absolute constant. In particular, for any non-negative integer $j\geq0$ and any $\sigma\in\mathbb{R}$, $|\sigma|>\sigma_{q,j}$, we have:
\begin{equation*}
\begin{split}
\big|\Phi_{q}^{(j)}(\sigma)\big|\leq\exp{\bigg(-K_{q}\sigma^{2}\digamma^{-1/2}\big(|\sigma|\big)\log{|\sigma|}\bigg)}\quad;\quad K_{q}=\frac{1}{2^{6}C_{q}}\,.
\end{split}
\end{equation*}
\end{thm}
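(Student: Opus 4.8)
The plan is to treat $\Phi_q(\alpha)$ as a characteristic function and to exploit the arithmetic independence of the frequencies $\gamma_m=\sqrt m$. I would first fix $M$ and analyse the inner limit in $X$. Since $\upphi_{q,m}\equiv0$ unless $m$ is squarefree, only the frequencies $\sqrt m$ with $\mu^2(m)=1$ occur, and these are linearly independent over $\mathbb{Q}$. Each $\upphi_{q,m}$ is an absolutely and uniformly convergent trigonometric series in the rational frequencies $k/d$ (the coefficients decay like $d^{-(q-3/2)}k^{-3/2}r_2(mk^2,d;q)$, which is summable for $q\geq3$), hence $\upphi_{q,m}$ is bounded, continuous and almost periodic with vanishing mean value. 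I would truncate $\upphi_{q,m}$ to the frequencies with $d,k\leq n$, expand the finite product $\exp\bigl(2\pi i\alpha\sum_{m\leq M}\upphi^{(n)}_{q,m}(\gamma_m x^2)\bigr)$ into a sum of pure exponentials $\exp(2\pi i\lambda x^2)$, and invoke Weyl equidistribution for the polynomial phase $x^2$: by the second-derivative (van der Corput) bound $\int_X^{2X}e^{2\pi i\lambda x^2}\textit{d}x\ll|\lambda|^{-1/2}$, every term with $\lambda\neq0$ contributes $o(1)$ as $X\to\infty$. The surviving terms are exactly those for which the internal frequencies cancel within each fixed $m$ — here one uses that any $\mathbb{Q}$-relation among the $\tfrac{k}{d}\sqrt m$ must hold separately for each squarefree $m$ — and summing them reproduces $\prod_{m\leq M}\mathcal{L}(\alpha,m)$, with $\mathcal{L}(\alpha,m)$ realised as the mean value computed along $T=n!$ (chosen so that $T$ is eventually divisible by every period $d/k$). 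Letting $n\to\infty$ removes the truncation.

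Next I would establish analyticity and convergence of the product. Boundedness of $\upphi_{q,m}$ makes each integrand entire in $\alpha$ with locally uniform bounds, so $\mathcal{L}(\cdot,m)$ is entire by differentiation under the integral sign. Writing $v_m^2$ for the $L^2$-mean of $\upphi_{q,m}$, one has $\sum_m v_m^2<\infty$ — this is the square-summability underlying the finiteness of the second moment in \eqref{eq:1.3}--\eqref{eq:1.4} — and since $\upphi_{q,m}$ has mean zero, the elementary inequality $|e^{iz}-1-iz|\leq\tfrac{|z|^2}{2}e^{|\Im z|}$ gives $|\mathcal{L}(\alpha,m)-1|\ll_{|\alpha|}v_m^2$ on compacta. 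Hence $\prod_m\mathcal{L}(\alpha,m)$ converges absolutely and locally uniformly, $\Phi_q$ is entire, and the identity $\Phi_q=\prod_m\mathcal{L}(\cdot,m)$ follows on letting $M\to\infty$.

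The heart of the matter is the tail bound \eqref{eq:5.2}, and this is where I expect the main difficulty. The strategy, following the method of Heath-Brown, is to bound $|\Phi_q(\alpha)|=\prod_m|\mathcal{L}(\alpha,m)|$ by splitting according to the size of $|\alpha|v_m$. In the Gaussian range, where $|\alpha|v_m$ is small, a second-order expansion yields $\log|\mathcal{L}(\sigma+i\tau,m)|=-2\pi^2(\sigma^2-\tau^2)v_m^2+O(\cdots)$, producing the quadratic terms of \eqref{eq:5.2}; in the oscillatory range, where $|\alpha|v_m$ is large, one needs genuine cancellation in the mean value of $e^{2\pi i\sigma\upphi_{q,m}}$, of stationary-phase/Bessel type $|\mathcal{L}(\sigma,m)|\ll(\sigma v_m)^{-1/2}$, forcing each such factor to be bounded away from $1$. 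Because $v_m^2$ decays like $m^{-3/2}$ up to the arithmetic weight built from $r_2(mk^2,d;q)$, the crossover occurs near $m\asymp|\sigma|^{4/3}$, and the count of contributing $m$ together with the logarithmic gains is what produces the exponent encoded in $\digamma(x)=x^{1/(3/4-c/\log\log x)}$. The genuinely new obstacle, compared with the periodic setting of \cite{heath1992distribution}, is that $\upphi_{q,m}$ is \emph{not} periodic, so the stationary-phase estimate and the frequency counting must be performed for an almost-periodic function carrying a full rational frequency module; controlling this joint cancellation while tracking the $\log\log$ correction in the optimal split between the two ranges is the crux. The positive term $C_q|\tau|\digamma^{1/4}(|\alpha|)$ arises from bounding the growth in the imaginary direction by the trivial estimate $|\mathcal{L}(\alpha,m)|\leq e^{2\pi|\tau|\,\|\upphi_{q,m}\|_\infty}$ on the oscillatory range.

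Finally, the derivative bound for $\Phi_q^{(j)}(\sigma)$ on the real axis follows from \eqref{eq:5.2} by a Cauchy estimate: applying the Cauchy integral formula on a circle $|\alpha-\sigma|=\rho$ with $\rho$ a suitable small multiple of $|\sigma|$ keeps $|\tau|\leq\rho$, so the terms $-C_q\tau^2\digamma^{-1/2}\log|\alpha|$ and $+C_q|\tau|\digamma^{1/4}$ are absorbed while the dominant factor $\exp\bigl(-C_q^{-1}\sigma^2\digamma^{-1/2}\log|\sigma|\bigr)$ survives up to a constant loss and the factor $j!\,\rho^{-j}$; choosing the constants so that this loss is compensated for $|\sigma|>\sigma_{q,j}$ yields the stated bound with $K_q=1/(2^6C_q)$.
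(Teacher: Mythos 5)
Your construction of $\Phi_{q}$ (truncating $\upphi_{q,m}$ to the frequencies $k/d$ with $d\le n$, reducing to exponentials $e^{2\pi i\lambda x^{2}}$ whose mean over $[X,2X]$ vanishes for $\lambda\neq0$, using the $\mathbb{Q}$-linear independence of $\sqrt{m}$ for squarefree $m$, and evaluating $\mathcal{L}(\alpha,m)$ along $T=n!$), as well as your proof of entirety and of the absolute convergence of $\prod_{m}\mathcal{L}(\alpha,m)$ (mean zero plus square-summability), coincide in substance with the paper's Lemma 5, Proposition 4 and the estimate \eqref{eq:5.67}; the paper merely organizes the first step as an induction on the number of factors, with the periodic truncations handled by $L^{1}$-convergent Fourier series. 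The genuine problem is your treatment of the tail bound \eqref{eq:5.2}, which you yourself call the crux and leave unproved: you make the whole argument depend on a stationary-phase/Bessel-type estimate $|\mathcal{L}(\sigma,m)|\ll(\sigma v_{m})^{-1/2}$ for the factors in the ``oscillatory range'' $m\ll|\sigma|^{4/3}$, i.e.\ on genuine cancellation in the mean value of $e^{2\pi i\sigma\upphi_{q,m}}$ for a non-periodic almost-periodic function. Nothing in your outline (or in the paper) supplies such an estimate, and proving one for the functions $\upphi_{q,m}$ would be a substantial separate problem; as written, the central inequality of your proof of \eqref{eq:5.2} is simply missing.

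The point you are missing is that \eqref{eq:5.2} needs nothing at all from the oscillatory range. In the paper's proof the product is split at $\ell(\alpha)=\big[(\epsilon^{-1}|\alpha|)^{1/\theta(|\alpha|)}\big]+1\asymp\digamma(|\alpha|)$, and every factor with $m<\ell$ is estimated \emph{trivially}: $|\mathcal{L}(\alpha,m)|\le\exp\big(2\pi|\tau|\,\|\upphi_{q,m}\|_{\infty}\big)$, which on the real axis is just $|\mathcal{L}(\sigma,m)|\le1$, and which after summing $\|\upphi_{q,m}\|_{\infty}\le a_{q}\mu^{2}(m)r_{2}(m)m^{-3/4}$ over $m<\ell$ produces exactly the term $C_{q}|\tau|\digamma^{1/4}(|\alpha|)$ in \eqref{eq:5.2} (see \eqref{eq:5.68}). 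All of the decay comes from the Gaussian range $m\ge\ell$, where the second-order expansion is legitimate because $|\alpha|r_{2}(m)m^{-3/4}\le\epsilon$ there, combined with the two-sided tail estimates \eqref{eq:5.38}--\eqref{eq:5.39} of Lemma 6, $A_{q}\ell^{-1/2}\log2\ell\le\sum_{m\ge\ell}\mathcal{Q}_{q}(m,2)\le B_{q}\ell^{-1/2}\log2\ell$: the lower bound (resting on $r_{2}(m,1;q)\ge2^{-(q+1)/2}r_{2}(m)$ and $\sum_{n\le N}\mu^{2}(n)r_{2}^{2}(n)\sim hN\log N$) yields the main term $-C_{q}^{-1}\sigma^{2}\digamma^{-1/2}(|\alpha|)\log|\alpha|$, while the upper bound controls the $+C_{q}\tau^{2}$ contribution. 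Replacing your oscillatory-range analysis by these trivial bounds closes the gap. A further caution on the last step: you take the Cauchy radius comparable to $|\sigma|$, in which case $C_{q}\tau^{2}\digamma^{-1/2}(|\alpha|)\log|\alpha|$ and $C_{q}|\tau|\digamma^{1/4}(|\alpha|)$ are of the same order $|\sigma|^{4/3+o(1)}$ as the main term, so the constants must be balanced delicately; the paper takes radius $1$, for which both corrections are negligible and $K_{q}=2^{-6}C_{q}^{-1}$ falls out at once.
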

\begin{thm}
Let $x\in\mathbb{R}$. Then the integral:
\begin{equation}\label{eq:5.3}
\mathcal{P}_{q}(x)\overset{\textit{def}}{=}\bigintssss\limits_{-\infty}^{\infty}\Phi_{q}(\sigma)\exp{\Big(-2\pi ix\sigma\Big)}\textit{d}\sigma
\end{equation}
converges absolutely, and defines a probability density. It satisfies for any non-negative integer $j\geq0$ and any $x\in\mathbb{R}$, $|x|>x_{q,j}$, the bound:
\begin{equation}\label{eq:5.4}
\begin{split}
\big|\mathcal{P}^{(j)}_{q}(x)\big|\leq\exp{\bigg(-|x|^{4-\beta/\log\log{|x|}}\bigg)}\quad;\quad\beta=48c\,.
\end{split}
\end{equation}
Moreover, the density $\mathcal{P}_{q}(x)$ can be extended to the whole complex plane $\mathbb{C}$ as an entire function of $x$, and in particular is supported on all of the real line.
\end{thm}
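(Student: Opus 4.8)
The plan is to realise $\mathcal{P}_q$ as the inverse Fourier transform of the characteristic function $\Phi_q$ and to convert the super-Gaussian decay of $\Phi_q$ supplied by Theorem 5 into the stated tail bound; I would split the work into convergence and smoothness, the probabilistic content, the tail estimate, and the entire continuation. For the first step, the real-axis bound of Theorem 5 gives $|\Phi_q(\sigma)|\le\exp(-K_q\sigma^2\digamma^{-1/2}(|\sigma|)\log|\sigma|)$ for $|\sigma|$ large, and since $\digamma(x)=x^{1/(3/4-c/\log\log x)}$ behaves like $x^{4/3+o(1)}$, this exponent grows like $\sigma^{4/3+o(1)}$. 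In particular $\sigma^{j}\Phi_q(\sigma)\in L^1(\mathbb{R})$ for every $j\ge0$, so \eqref{eq:5.3} converges absolutely, $\mathcal{P}_q\in C^\infty(\mathbb{R})$, and differentiation under the integral gives $\mathcal{P}_q^{(j)}(x)=\int_{-\infty}^{\infty}(-2\pi i\sigma)^j\Phi_q(\sigma)e^{-2\pi i x\sigma}\,\textit{d}\sigma$.

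For the probabilistic content, each factor $\mathcal{L}(\sigma,m)$ is, for real $\sigma$, the mean value $\langle e^{2\pi i\sigma\upphi_{q,m}}\rangle$, i.e.\ the characteristic function of the value distribution of $\upphi_{q,m}$, hence positive-definite; consequently the product $\Phi_q=\prod_m\mathcal{L}(\cdot,m)$ is positive-definite, and it is continuous since the product converges uniformly on compacta by Theorem 5. By Bochner's theorem $\Phi_q$ is the characteristic function of a probability measure, and because $\Phi_q\in L^1(\mathbb{R})$ this measure is absolutely continuous with density given precisely by the inversion integral \eqref{eq:5.3}. Therefore $\mathcal{P}_q\ge0$, while $\int_{\mathbb{R}}\mathcal{P}_q=\Phi_q(0)=1$, the value $1$ being immediate from \eqref{eq:5.1}.

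The crux is the tail bound. Fix $x>0$ (the case $x<0$ is symmetric, shifting upward). Since $\Phi_q$ is entire, I shift the contour in $\mathcal{P}_q^{(j)}$ down to $\Im\sigma=-\eta$, $\eta>0$, the vertical sides at $\Re\sigma=\pm R$ vanishing as $R\to\infty$ by \eqref{eq:5.2}, to obtain
\begin{equation*}
\big|\mathcal{P}_q^{(j)}(x)\big|\le e^{-2\pi x\eta}\bigintssss\limits_{-\infty}^{\infty}\big|2\pi(u-i\eta)\big|^{j}\,\big|\Phi_q(u-i\eta)\big|\,\textit{d}u .
\end{equation*}
Inserting \eqref{eq:5.2} with $\sigma=u$, $\tau=-\eta$, the factor $\exp(-C_q^{-1}u^2\digamma^{-1/2}\log)$ makes the $u$-integral convergent and of merely polynomial size in $\eta$ (a Gaussian of width $\ll\digamma^{1/4}(\eta)$ over $|u|\ll\eta$, with rapidly decaying tails where $|u-i\eta|\asymp|u|$), so that
\begin{equation*}
\big|\mathcal{P}_q^{(j)}(x)\big|\le \eta^{O(1)}\exp\Big(-2\pi x\eta+C_q\eta^2\digamma^{-1/2}(\eta)\log\eta+C_q\eta\,\digamma^{1/4}(\eta)\Big).
\end{equation*}
It remains to minimise the exponent over $\eta$. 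Both loss terms have size $\eta^{4/3+o(1)}$, so the balance $x\eta\asymp\eta^{4/3+o(1)}$ forces $\eta\asymp x^{3+o(1)}$, at which the gain $-2\pi x\eta$ dominates and the exponent is $\asymp-x\eta\asymp-x^{4+o(1)}$; the exponent $4$ appears as the Hölder conjugate of the effective decay rate $4/3$ of $\Phi_q$. The \textbf{main obstacle} is to upgrade this heuristic to the precise exponent $4-\beta/\log\log|x|$ with $\beta=48c$: one takes $\eta=x^{4a(\eta)}$ with $a(\eta)=3/4-c/\log\log\eta$, uses that $\log\log\eta$ and $\log\log x$ agree up to an additive $O(1)$ (since $\eta\asymp x^3$), and tracks the slowly varying exponent of $\digamma$ together with the interplay of the two nearly equal loss terms $\eta^2\digamma^{-1/2}\log\eta$ and $\eta\,\digamma^{1/4}$ — it is exactly this variable exponent that generates the $\log\log$-correction and pins down the constant.

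Finally, for the entire continuation, when $x\in\mathbb{C}$ the factor $|e^{-2\pi i x\sigma}|=e^{2\pi(\Im x)\sigma}$ grows only linearly in $\sigma$, which is swamped by the $\sigma^{4/3+o(1)}$ decay of $\Phi_q$; hence \eqref{eq:5.3} converges locally uniformly on $\mathbb{C}$ and defines an entire function of $x$ (Morera, or differentiation under the integral). A nonzero entire function restricts to a real-analytic function on $\mathbb{R}$ with only isolated zeros, so $\mathcal{P}_q$, being a probability density and thus not identically zero, cannot vanish on any interval; its support is therefore all of $\mathbb{R}$.
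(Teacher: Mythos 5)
Your proposal is correct, and it divides naturally into two parts when compared with the paper's proof of Theorem 6. For convergence, entirety, and the tail bound you take essentially the paper's route: absolute convergence and differentiation under the integral from the real-axis bound of Theorem 5, then a shift of the contour to $\Im\sigma=-\eta$ with sign opposite to $x$, insertion of \eqref{eq:5.2}, and optimization at $\eta\asymp|x|^{3-o(1)}$. The calibration you label the ``main obstacle'' is exactly what the paper executes explicitly: it rescales $\sigma\mapsto C_{q}\tau\sigma$, splits the $\sigma$-integral at $|\sigma|=\sqrt{2}$ and again at $y=2^{4\theta/(8\theta-3)}$, identifies the dominant term \eqref{eq:5.83}, and then verifies that the choice $|\tau|=|x|^{3-\beta/\log\log|x|}$ with $\beta=48c$ leaves a net gain $-\pi|x|\,|\tau|$ which also absorbs the polynomial prefactor; your sketch identifies the correct mechanism (the slowly varying exponent of $\digamma$ and $\log\log\eta=\log\log|x|+O(1)$), so what remains is bookkeeping of the same kind the paper does, with the minor caveat that the constants in your intermediate bound change harmlessly when $|\alpha|=\sqrt{u^{2}+\eta^{2}}$ is replaced by $\eta$ inside $\digamma$ and $\log$. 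The genuinely different part is the probability-density claim: the paper does \emph{not} prove it within Theorem 6, but explicitly defers it to the proof of Theorem 1, where non-negativity of $\mathcal{P}_{q}$ follows from the weak-limit relation \eqref{eq:6.21} applied to non-negative test functions and the total mass from $\mathcal{F}\equiv1$. Your argument --- each $\mathcal{L}(\sigma,m)$ is a pointwise limit of characteristic functions of the value distribution of $\upphi_{q,m}$ on $[0,n!]$, hence positive-definite; therefore the locally uniformly convergent product $\Phi_{q}$ is positive-definite, continuous, with $\Phi_{q}(0)=1$, and Bochner's theorem together with $L^{1}$-inversion yields $\mathcal{P}_{q}\geq0$ and $\int_{-\infty}^{\infty}\mathcal{P}_{q}(\alpha)\textit{d}\alpha=1$ --- is sound and self-contained. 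The trade-off: your route makes Theorem 6 logically independent of Theorem 1 (indeed of the error term $\mathcal{E}_{q}$ altogether), which is tidier; the paper's route avoids invoking positive-definiteness and Bochner, obtaining the same facts as a byproduct of the distributional convergence it must establish anyway, at the cost of a forward reference.
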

\noindent
The proofs will be given at the end of this section
\subsection{Mean value theorems}
In this section we establish certain mean value theorems related to the functions $\upphi_{q,m}(t)$. We make the following definition.
\begin{mydef}
For integers $m,n\geq1$, we define the function $\upphi_{q,m}(\cdot\,;n)$ in accordance with the parity of $q$. For $q\equiv0\,(2)$: 
\begin{equation*}
\upphi_{q,m}(t;n)=\frac{\varrho_{q}}{2\pi}\frac{\mu^{2}(m)}{m^{3/4}}\sum_{d\leq n}\sum_{k=1}^{\infty}\frac{\xi\big(d;q\big)r_{2}\big(mk^{2},d;q\big)}{d^{q-3/2}k^{3/2}}\sin{\Big(2\pi\frac{k}{d}t-\frac{\pi}{4}\Big)}
\end{equation*}
and for $q\equiv1\,(2)$: 
\begin{equation*}
\begin{split}
\upphi_{q,m}(t;n)=2^{q-2}\frac{\varrho_{\chi,q}}{\pi}\frac{\mu^{2}(m)}{m^{3/4}}\Bigg\{&\sum_{d\leq n}\sum_{k=1}^{\infty}\frac{\chi(d)r_{2}\big(mk^{2},d;q\big)}{d^{q-3/2}k^{3/2}}\sin{\Big(2\pi\frac{k}{d}t-\frac{\pi}{4}\Big)}+\\
&+(-1)^{\frac{q+1}{2}}2^{q}\underset{\,\,d\equiv0\,(4)}{\sum_{d\leq n}}\sum_{k=1}^{\infty}\frac{r_{2,\chi}\big(mk^{2},d;q\big)}{d^{q-3/2}k^{3/2}}\cos{\Big(2\pi \frac{k}{d}t-\frac{\pi}{4}\Big)}\Bigg\}\,.
\end{split}
\end{equation*}
\end{mydef}
\begin{lem}
For any integers $m,n\geq1$ and any $t\in\mathbb{R}$:
\begin{equation}\label{eq:5.5}
\begin{split}
\big|\upphi_{q,m}(t)\big|\leq a_{q}\frac{\mu^{2}(m)}{m^{3/4}}r_{2}(m)\quad;\quad\big|\upphi_{q,m}\big(t;n\big)\big|\leq a_{q}\frac{\mu^{2}(m)}{m^{3/4}}r_{2}(m)
\end{split}
\end{equation}
\begin{equation}\label{eq:5.6}
\begin{split}
\big|\upphi_{q,m}(t)-\upphi_{q,m}\big(t;n\big)\big|\leq\frac{a_{q}}{n^{1/2}}\frac{\mu^{2}(m)}{m^{3/4}}r_{2}(m)
\end{split}
\end{equation}
where $a_{q}>0$ is some constant.
\end{lem}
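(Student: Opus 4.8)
The plan is to establish all three estimates by passing to absolute values inside the defining series and reducing everything to the ordinary sum-of-two-squares function $r_2(m)$. Since the factor $\mu^{2}(m)$ appears in every summand, both sides of each inequality vanish unless $m$ is squarefree, so I may assume $\mu^{2}(m)=1$ throughout. Bounding $|\sin(\cdot)|,|\cos(\cdot)|\le1$ immediately removes the dependence on $t$ and collapses the distinction between the sine and cosine parts, so the even and odd $q$ cases are treated identically once one records the uniform bounds $|\xi(d;q)|\le2^{q}$ and $|\chi(d)|\le1$.

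The arithmetic heart of the argument is a pointwise comparison of the coefficients. First, since each term of $r_{2}(mk^{2},d;q)$ carries the factor $(|a|/\sqrt{mk^{2}})^{q-1}\le1$, the divisibility constraint $d\mid b$ only thins the set of representations, and $|\chi(|a|)|\le1$, so one has the crude but sufficient bounds $r_{2}(mk^{2},d;q)\le r_{2}(mk^{2})$ and $|r_{2,\chi}(mk^{2},d;q)|\le r_{2}(mk^{2})$, uniformly in $d$. Next I would invoke the multiplicativity of $\tfrac14 r_{2}$: expanding $r_{2}/4$ as a product over prime powers and using that $m$ is squarefree (so $v_{p}(m)\in\{0,1\}$), a factor-by-factor comparison of $r_{2}(mk^{2})$ with $r_{2}(m)$ yields $r_{2}(mk^{2})\le r_{2}(m)\,\kappa(k)$, where $\kappa(k)=\prod_{p}(1+2v_{p}(k))$ is the number of divisors of $k^{2}$; when $m$ has a prime factor $\equiv3\,(4)$ both sides vanish and the inequality is trivial.

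Granting these, \eqref{eq:5.5} follows by a direct double summation: I would majorise the absolute value by $\frac{\varrho_{q}}{2\pi}\frac{\mu^{2}(m)}{m^{3/4}}\,r_{2}(m)\big(\sum_{d}|\xi(d;q)|d^{-(q-3/2)}\big)\big(\sum_{k}\kappa(k)k^{-3/2}\big)$. The $d$-sum converges because $q-3/2\ge3/2>1$ for $q\ge3$ (the $2^{q}\mathds{1}_{d\equiv0(4)}$ piece of $\xi$ is only a bounded multiple and does not affect convergence), while $\sum_{k}\kappa(k)k^{-3/2}$ converges since $\kappa(k)\ll_{\varepsilon}k^{\varepsilon}$. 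The product of these two finite sums, absorbed together with $\varrho_{q}$ and $2\pi$, defines the constant $a_{q}$. Restricting the $d$-range to $d\le n$ only removes nonnegative terms, so the same $a_{q}$ bounds $|\upphi_{q,m}(t;n)|$.

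Finally, \eqref{eq:5.6} is the tail of the very same computation: the difference $\upphi_{q,m}(t)-\upphi_{q,m}(t;n)$ is exactly the portion with $d>n$, so the identical majorisation leaves the residual factor $\sum_{d>n}|\xi(d;q)|d^{-(q-3/2)}\ll\sum_{d>n}d^{-(q-3/2)}\ll n^{-(q-5/2)}\le n^{-1/2}$, valid because $q\ge3$. The borderline case $q=3$, where $q-3/2=3/2$ and the tail is $\asymp n^{-1/2}$, is what fixes the exponent in \eqref{eq:5.6} and is the only place where $q\ge3$ is used sharply; for $q>3$ the bound is wasteful but still correct. The sole nontrivial obstacle is thus the multiplicative comparison $r_{2}(mk^{2})\le r_{2}(m)\kappa(k)$, after which everything reduces to the convergence of two absolutely convergent Dirichlet-type series.
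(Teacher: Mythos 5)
Your proposal is correct and follows essentially the same route as the paper: reduce to squarefree $m$ with no prime factor $\equiv 3\,(4)$, use the pointwise coefficient bounds $\big|r_{2,\chi}(mk^{2},d;q)\big|\leq r_{2}(mk^{2},d;q)\leq r_{2}(mk^{2})\leq r_{2}(m)\big(\sum_{\ell\mid k}1\big)^{2}$, and then sum the resulting absolutely convergent series in $d$ and $k$, with the tail $\sum_{d>n}d^{-(q-3/2)}\ll n^{-1/2}$ (sharp at $q=3$) giving \eqref{eq:5.6}. The only difference is presentational: the paper compresses the final summation step into the phrase ``follows by partial summation,'' whereas you spell out the convergence of the two Dirichlet-type factors explicitly.
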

\begin{proof}
Let $m,n\geq1$ be any integers with $\mu^{2}(m)=1$. First, we note that $\big|r_{2,\chi}\big(\cdot\,,\cdot\,;q\big)\big|\leq r_{2}\big(\cdot\,,\cdot\,;q\big)\leq r_{2}\big(\cdot\big)$. Suppose $k,d\geq1$ are integers. If $m$ has a prime divisor $p\equiv3\,(4)$ then $r_{2,\chi}\big(mk^{2},d;q\big)=r_{2}\big(mk^{2},d;q\big)=0$ because $m$ is square-free, and so $\upphi_{q,m}(t)=\upphi_{q,m}\big(t;n\big)\equiv0$. We may thus suppose that if $p|m$ then either $p=2$ or $p\equiv1\,(4)$. In this case we have:
\begin{equation*}
\big|r_{2,\chi}\big(mk^{2},d;q\big)\big|\leq\big|r_{2}\big(mk^{2},d;q\big)\big|\leq r_{2}(mk^{2})\leq r_{2}(m)\bigg(\sum_{\ell|k}1\bigg)^{2}.
\end{equation*}
The lemma now follows by partial summation. 
\end{proof}
\begin{mydef}
Let $f:\mathbb{R}\rightarrow\mathbb{C}$ be a continuous function. For $X>0$, define the mean value $\mathfrak{M}_{X}\big(f\big)$ of $f$ to be:
\begin{equation*}
\mathfrak{M}_{X}\big(f\big)=\frac{1}{X}\bigintssss\limits_{X}^{2X}f(x)\textit{d}x\,.
\end{equation*}
If in addition, $\mathfrak{M}_{X}\big(f\big)$ converges as $X\to\infty$, we shall denote by $\mathcal{L}\big(f\big)$ the resulting limit.
\end{mydef}
\begin{mydef}
For $\alpha\in\mathbb{C}$, $\gamma\in\mathbb{R}$ and $m\geq1$ an integer, let the functions $\mathscr{F}_{\alpha,m}$ and $g_{\gamma}$ be defined by:
\begin{equation*}
\begin{split}
&\mathscr{F}_{\alpha,m}(x)=\exp{\bigg(2\pi i\alpha\upphi_{q,m}\big(\gamma_{m}x^{2}\big)\bigg)}\\
&g_{\gamma}(x)=\exp{\big(2\pi i\gamma x^{2}\big)}\,.
\end{split}
\end{equation*}
\end{mydef}
\begin{lem}
Fix a complex number $\alpha\in\mathbb{C}$. Let $\mathscr{A}$ be a finite set of integers, and $\gamma\in\mathbb{R}$ a real number. Then the limit:
\begin{equation*}
\mathcal{L}\big(\alpha,\mathscr{A};\gamma\big)\overset{\textit{def}}{=}\lim_{X\to\infty}\mathfrak{M}_{X}\Big(g_{\gamma}\prod_{m\in\mathscr{A}}\mathscr{F}_{\alpha,m}\Big)
\end{equation*}
exists. Moreover, if $\gamma \notin\mathscr{B}=\textit{span}_{\mathbb{Q}}\big\{\sqrt{m}:m\in\mathbb{N},\,\,\mu^{2}(m)=1\big\}$ then this limit is zero. In the particular case where $\gamma=0$ and $\mathscr{A}=\{m\}$, the limit is given by:
\begin{equation}\label{eq:5.7}
\mathcal{L}(\alpha,m)=\lim_{n\to\infty}\frac{1}{n!}\bigintssss\limits_{0}^{n!}\exp{\Big(2\pi i\alpha\upphi_{q,m}(t)\Big)}\textit{d}t\,.
\end{equation}
\end{lem}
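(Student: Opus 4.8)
The plan is to compute the mean value $\mathfrak{M}_X\big(g_\gamma\prod_{m\in\mathscr{A}}\mathscr{F}_{\alpha,m}\big)$ by first making the substitution $u=x^2$, which converts the quadratic phase $g_\gamma(x)=\exp(2\pi i\gamma x^2)$ and the arguments $\upphi_{q,m}(\gamma_m x^2)$ into honest exponentials in the linear variable $u$. After the change of variables $x\mapsto u=x^2$, $\textit{d}x=\tfrac{1}{2}u^{-1/2}\textit{d}u$, the average over $[X,2X]$ becomes an average over $[X^2,4X^2]$ against the smooth weight $u^{-1/2}$; since this weight varies slowly, a routine partial-summation/integration-by-parts argument shows the weighted average has the same limit as the unweighted average of $g_\gamma\prod_m \mathscr{F}_{\alpha,m}$ viewed as a function of $u$. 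So it suffices to study the genuine mean value over long intervals in $u$ of the almost periodic function
\begin{equation*}
G(u)=\exp\!\Big(2\pi i\gamma u\Big)\prod_{m\in\mathscr{A}}\exp\!\Big(2\pi i\alpha\,\upphi_{q,m}(\gamma_m u)\Big).
\end{equation*}

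The key structural input is that each $\upphi_{q,m}(\gamma_m u)$ is, by its defining Fourier series, a uniform limit of trigonometric polynomials in $u$ whose frequencies lie in $\gamma_m\mathbb{Z}=\sqrt{m}\,\mathbb{Z}$; hence $\exp(2\pi i\alpha\,\upphi_{q,m}(\gamma_m u))$ is itself a uniformly convergent sum of exponentials with frequencies in $\sqrt{m}\,\mathbb{Q}$, and the full product $G(u)$ expands into a uniformly convergent series $\sum_\nu c_\nu e^{2\pi i\nu u}$ whose frequencies $\nu$ lie in $\gamma+\mathrm{span}_{\mathbb{Q}}\{\sqrt{m}:m\in\mathscr{A}\}$. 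First I would establish this expansion rigorously: the bounds in Lemma 4 (equation \eqref{eq:5.5}) guarantee each factor is a bounded continuous function so the products and limits are legitimate, and one truncates each $\upphi_{q,m}$ at level $n$ (using $\upphi_{q,m}(\cdot;n)$ and the tail estimate \eqref{eq:5.6}) to get trigonometric-polynomial approximants converging uniformly to $G$. The mean value of a uniformly convergent exponential series exists and equals the coefficient $c_0$ of the zero frequency, so $\mathcal{L}(\alpha,\mathscr{A};\gamma)=c_0$ exists.

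For the vanishing claim, observe that every nonzero frequency appearing in the product sits in the coset $\gamma+\mathrm{span}_{\mathbb{Q}}\{\sqrt{m}:m\in\mathscr{A}\}$, which is contained in $\gamma+\mathscr{B}$. If $\gamma\notin\mathscr{B}$, then $0$ itself is \emph{not} an attainable frequency: a frequency equals zero only if $\gamma$ equals a $\mathbb{Q}$-linear combination of the $\sqrt{m}$, i.e. $\gamma\in\mathscr{B}$. Therefore $c_0=0$ and the limit vanishes. The linear independence over $\mathbb{Q}$ of the $\{\sqrt{m}:\mu^2(m)=1\}$ is what makes $\mathscr{B}$ a genuine $\mathbb{Q}$-vector space in which distinct frequencies cannot accidentally collide; this independence is the arithmetic fact underpinning both the existence of the limit and its vanishing. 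For the special case $\gamma=0$, $\mathscr{A}=\{m\}$, the function $u\mapsto\upphi_{q,m}(\gamma_m u)=\upphi_{q,m}(\sqrt{m}\,u)$ is almost periodic with frequencies in $\sqrt m\,\mathbb{Q}$; rescaling $t=\sqrt m\,u$ and exploiting that $\upphi_{q,m}(t)$ has frequencies in $\mathbb{Q}$ (so it is a uniform limit of functions periodic with period dividing $n!$ for large $n$), the mean value over $u$ equals the mean value over one long period, which is precisely the limit of the averages $\tfrac{1}{n!}\int_0^{n!}\exp(2\pi i\alpha\,\upphi_{q,m}(t))\,\textit{d}t$ in \eqref{eq:5.7}.

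The main obstacle will be the justification of interchanging the infinite product/limit with the mean value, i.e. controlling the tails uniformly in $X$. The subtlety is that $G(u)$ is an infinite product only in the limit $M\to\infty$; here $\mathscr{A}$ is finite, which is what makes the argument clean, but one must still control the infinite $k$- and $d$-sums defining each $\upphi_{q,m}$. The estimates \eqref{eq:5.5}–\eqref{eq:5.6} and Remark 1 (bounding the arithmetical coefficients by $r_2(m)/m^{3/4}$) supply exactly the uniform control needed so that the trigonometric-polynomial approximants converge uniformly on $\mathbb{R}$, which legitimizes passing the $u$-average through the limit. The slowly-varying weight $u^{-1/2}$ from the change of variables must also be handled: I would write $\int_X^{2X}f(x^2)\,\textit{d}x$, substitute, and use that $u^{-1/2}$ is monotone to reduce to averages of $f$ over dyadic $u$-blocks, so no extra oscillation is introduced and the limiting mean value is unaffected by the weight.
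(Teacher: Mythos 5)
Your proof is correct, but it takes a genuinely different route from the paper's. The paper proceeds by induction on $\big|\mathscr{A}\big|$: the last factor is truncated to $\upphi_{q,m_{\ell}}(\,\cdot\,;n)$ with uniform error \eqref{eq:5.6}, the function $t\mapsto\exp\big(2\pi i\alpha\upphi_{q,m_{\ell}}(n!\,t;n)\big)$ is observed to be continuous and $1$-periodic and is expanded in a Fourier series converging in the mean, and this converts $\mathfrak{M}_{X}\big(g_{\gamma}\prod_{m\in\mathscr{A}}\mathscr{F}_{\alpha,m}\big)$ into a finite linear combination of means $\mathfrak{M}_{X}\big(g_{\gamma+\frac{h}{n!}\gamma_{m_{\ell}}}\prod_{m\in\mathscr{Q}}\mathscr{F}_{\alpha,m}\big)$ handled by the induction hypothesis; existence then follows from Cauchy's criterion, the base case is the oscillatory estimate \eqref{eq:5.8}, and the vanishing claim and formula \eqref{eq:5.7} come out of the same bookkeeping. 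You instead linearize the phases via $u=x^{2}$ (your treatment of the weight $u^{-1/2}$ is sound), identify the integrand as a Bohr almost periodic function of $u$ — a uniform limit of trigonometric polynomials — with spectrum in the coset $\gamma+\mathrm{span}_{\mathbb{Q}}\{\sqrt{m}:m\in\mathscr{A}\}$, and appeal to the mean-value theorem for almost periodic functions, the vanishing being the statement that $0$ does not lie in the spectrum; \eqref{eq:5.7} follows because the mean may be computed along $T=n!$. The paper's argument is elementary and self-contained (only mean-convergence of Fourier series of continuous periodic functions plus the stationary-phase bound for $g_{\gamma}$), and its inductive skeleton is recycled for Proposition 4; your argument is more conceptual and yields a sharper conclusion: vanishing already when $\gamma\notin\mathrm{span}_{\mathbb{Q}}\{\sqrt{m}:m\in\mathscr{A}\}$. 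This relative form is in fact the one needed later: in the proof of Proposition 4 the lemma is invoked with $\gamma=\frac{h}{n!}\gamma_{\ell}$, $\mu^{2}(\ell)=1$, $h\neq0$, a value which literally lies in $\mathscr{B}$, so it is the relative-span criterion (also provable by the paper's induction) combined with the $\mathbb{Q}$-linear independence of square roots of distinct squarefree integers that does the work there. In particular, contrary to your closing remark, no linear independence is needed inside the present lemma — the vanishing is pure span-membership — and independence enters only at the point of application.

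Two slips, neither fatal. First, the frequencies of $u\mapsto\upphi_{q,m}(\gamma_{m}u)$ lie in $\sqrt{m}\,\mathbb{Q}$ (they are $\pm\sqrt{m}\,k/d$), not in $\sqrt{m}\,\mathbb{Z}$ as you first assert; your subsequent reasoning uses $\sqrt{m}\,\mathbb{Q}$, so nothing propagates. Second, saying that $G$ ``expands into a uniformly convergent series $\sum_{\nu}c_{\nu}e^{2\pi i\nu u}$'' overstates what is automatic: the Fourier--Bohr series of an almost periodic function need not converge uniformly, so the statement should read ``$G$ is a uniform limit of trigonometric polynomials with frequencies in the coset''. (The stronger statement happens to be true here, since the coefficients of $\upphi_{q,m}$ are absolutely summable and exponentiation preserves absolute summability, but your truncation via $\upphi_{q,m}(\,\cdot\,;n)$ and \eqref{eq:5.6} already supplies the uniform approximants, which is all that the mean-value and spectral-support arguments require.)
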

\begin{proof}
The proof is by induction on $\big|\mathscr{A}\big|=\ell$. For $\ell=0$ and real $\gamma$ we have:
\begin{equation}\label{eq:5.8}
\mathfrak{M}_{X}\big(g_{\gamma}\big)=\frac{1}{X}\bigintssss\limits_{X}^{2X}\exp{\big(2\pi i\gamma x^{2}\big)}\textit{d}x\underset{X\to\infty}{\longrightarrow}\left\{
        \begin{array}{ll}
            1 & ; \gamma=0\\\\
            0 & ; \gamma\neq0\,.
        \end{array}
    \right.
\end{equation}
Suppose the lemma holds for a particular value of $\ell-1$, and let $\mathscr{A}=\big\{m_{1},\ldots,m_{\ell}\big\}=\mathscr{Q}\cup\big\{m_{\ell}\big\}$ be a set of $\ell$ distinct numbers. Fix a real number $\gamma$. If $\mu(m_{\ell})=0$ then $\upphi_{q,m_{\ell}}\big(x\big)\equiv0$, and so the lemma holds trivially for the value $\ell$ as well. Suppose now that $\mu^{2}(m_{\ell})=1$, and let $\epsilon>0$ be arbitrary. By Lemma 4 we may find $N=N_{\epsilon,\ell}$ such that for $n\geq N$ and $t\in\mathbb{R}$:
\begin{equation}\label{eq:5.9}
\Big|\exp{\Big(2\pi i\alpha\Big\{\upphi_{q,m_{\ell}}\big(t\big)-\upphi_{q,m_{\ell}}\big(t;n\big)\Big\}\Big)}-1\Big|\leq\frac{\epsilon}{6}\exp{\Big(-16\pi\ell a_{q}|\alpha|\Big)}\,.
\end{equation}
Thus for any $n\geq N$ and any $X>0$:
\begin{equation}\label{eq:5.10}
\Bigg|\mathfrak{M}_{X}\Big(g_{\gamma}\prod_{m\in\mathscr{A}}\mathscr{F}_{\alpha,m}\Big)-\frac{1}{X}\bigintssss\limits_{X}^{2X}\exp{\Big(2\pi i\alpha\upphi_{q,m_{\ell}}\big(\gamma_{m_{\ell}}x^{2};n\big)\Big)}g_{\gamma}(x)\prod_{m\in\mathscr{Q}}\mathscr{F}_{\alpha,m}(x)\textit{d}x\Bigg|\leq\frac{\epsilon}{6}
\end{equation}
Now, let us fix an integer $n\geq N$. The function $t\mapsto\exp{\Big(2\pi i\alpha\upphi_{q,m_{\ell}}\big(n!t;n\big)\Big)}$ is continuous and $1$-periodic, and has a Fourier series which
converges to it in the mean:
\begin{equation*}
\begin{split}
\lim_{H\to\infty}\bigintssss\limits_{0}^{1}\Big|\exp{\Big(2\pi i\alpha\upphi_{q,m_{\ell}}\big(n!t;n\big)\Big)}-\sum_{h=-H}^{H}c_{h,n}(m_{\ell})\exp{\big(2\pi iht\big)}\Big|\textit{d}t=0
\end{split}
\end{equation*}
where the Fourier coefficients are given by:
\begin{equation*}
c_{h,n}(m_{\ell})=\frac{1}{n!}\bigintssss\limits_{0}^{n!}\exp{\bigg(2\pi i\Big(\alpha\upphi_{q,m_{\ell}}(t;n)-\frac{h}{n!}t\Big)\bigg)}\textit{d}t\,.
\end{equation*}
In particular, for any $X\geq\sqrt{\frac{n!}{\gamma_{m_{\ell}}}}$ we have:
\begin{equation}\label{eq:5.11}
\begin{split}
&\Bigg|\frac{1}{X}\bigintssss\limits_{X}^{2X}\bigg\{\exp{\Big(2\pi i\alpha\upphi_{q,m_{\ell}}\big(\gamma_{m_{\ell}}x^{2};n\big)\Big)}-\sum_{h=-H}^{H}c_{h,n}(m_{\ell})\exp{\Big(2\pi i\frac{h}{n!}\gamma_{m_{\ell}}x^{2}\Big)}\bigg\}g_{\gamma}(x)\prod_{m\in\mathscr{Q}}\mathscr{F}_{\alpha,m}(x)\textit{d}x\Bigg|\leq\\
&\leq2\exp{\Big(16\pi(\ell-1) a_{q}|\alpha|\Big)}\bigintssss\limits_{0}^{1}\Big|\exp{\Big(2\pi i\alpha\upphi_{q,m_{\ell}}\big(n!t;n\big)\Big)}-\sum_{h=-H}^{H}c_{h,n}(m_{\ell})\exp{\big(2\pi iht\big)}\Big|\textit{d}t\underset{H\to\infty}{\longrightarrow}0\,.
\end{split}
\end{equation}
We may thus find some $H=H_{\epsilon,m_{\ell},n}$ such that for all $X\geq\sqrt{\frac{n!}{\gamma_{m_{\ell}}}}$ :
\begin{equation}\label{eq:5.12}
\bigg|\mathfrak{M}_{X}\Big(g_{\gamma}\prod_{m\in\mathscr{A}}\mathscr{F}_{\alpha,m}\Big)-\sum_{h=-H}^{H}c_{h,n}(m_{\ell})\mathfrak{M}_{X}\Big(g_{\gamma+\frac{h}{n!}\gamma_{m_{\ell}}}\prod_{m\in\mathscr{Q}}\mathscr{F}_{\alpha,m}\Big)\bigg|\leq\frac{\epsilon}{3}\,.
\end{equation}
Now, by our induction hypothesis:
\begin{equation}\label{eq:5.13}
\sum_{h=-H}^{H}c_{h,n}(m_{\ell})\mathfrak{M}_{X}\Big(g_{\gamma+\frac{h}{n!}\gamma_{m_{\ell}}}\prod_{m\in\mathscr{Q}}\mathscr{F}_{\alpha,m}\Big)\underset{X\to\infty}{\longrightarrow}\sum_{h=-H}^{H}c_{h,n}(m_{\ell})\mathcal{L}\Big(\alpha,\mathscr{Q};\gamma+\frac{h}{n!}\gamma_{m_{\ell}}\Big)=\mathscr{L}
\end{equation}
for some $\mathscr{L}\in\mathbb{C}$. If $\gamma\notin\mathscr{B}$ then $\gamma+\frac{h}{n!}\gamma_{m_{\ell}}\notin\mathscr{B}$, and so by our induction hypothesis:
\begin{equation*}
\mathcal{L}\Big(\alpha,\mathscr{Q};\gamma+\frac{h}{n!}\gamma_{m_{\ell}}\Big)=0
\end{equation*}
for each value of $-H\leq h\leq H$. Thus, if $\gamma\notin\mathscr{B}$ then $\mathscr{L}=0$. Now, by \eqref{eq:5.13} we may find $T=T_{\epsilon,m_{\ell},n,H}\geq\sqrt{\frac{n!}{\gamma_{m_{\ell}}}}$ such that for all
$X\geq T$:
\begin{equation}\label{eq:5.14}
\Bigg|\sum_{h=-H}^{H}c_{h,n}(m_{\ell})\mathfrak{M}_{X}\Big(g_{\gamma+\frac{h}{n!}\gamma_{m_{\ell}}}\prod_{m\in\mathscr{Q}}\mathscr{F}_{\alpha,m}\Big)-\mathscr{L}\Bigg|\leq\frac{\epsilon}{6}\,.
\end{equation}
In particular, from \eqref{eq:5.12} and \eqref{eq:5.14} we obtain that for any $X,Y\geq T$:
\begin{equation}\label{eq:5.15}
\bigg|\mathfrak{M}_{X}\Big(g_{\gamma}\prod_{m\in\mathscr{A}}\mathscr{F}_{\alpha,m}\Big)-\mathfrak{M}_{Y}\Big(g_{\gamma}\prod_{m\in\mathscr{A}}\mathscr{F}_{\alpha,m}\Big)\bigg|\leq\epsilon\,.
\end{equation}
Since $\epsilon$ in \eqref{eq:5.15} is arbitrary, we deduce by Cauchy's convergence criterion that the limit $\mathcal{L}\big(\alpha,\mathscr{A};\gamma\big)$ exists. Now suppose $\gamma\notin\mathscr{B}$. Then the limit $\mathscr{L}$ in \eqref{eq:5.13} satisfies $\mathscr{L}=0$, and so by letting $X\to\infty$ in \eqref{eq:5.12} we deduce that:
\begin{equation}\label{eq:5.16}
\Big|\mathcal{L}\big(\alpha,\mathscr{A};\gamma\big)\Big|\leq\frac{\epsilon}{3}
\end{equation}
for any $\epsilon>0$, which implies that $\mathcal{L}\big(\alpha,\mathscr{A};\gamma\big)=0$.\\\\
We now prove \eqref{eq:5.7} in the particular case $\mathscr{A}=\{m\}$ and $\gamma=0$ $m\geq1$. If $\mu(m)=0$, then $\upphi_{q,m}\big(t\big)\equiv0$, and so the claim is obvious. Suppose now that $\mu^{2}(m)=1$, and let $\epsilon>0$ be arbitrary. Then \eqref{eq:5.12} reads as follows. For any $n\geq N_{\epsilon}$, there exists $H(n)=H_{\epsilon,m,n}$ such that, for all $X\geq\sqrt{\frac{n!}{\gamma_{m}}}$: 
\begin{equation}\label{eq:5.17}
\bigg|\mathfrak{m}_{X}\Big(\mathscr{F}_{\alpha,m}\Big)-\sum_{h=-H(n)}^{H(n)}c_{h,n}(m)\mathfrak{M}_{X}\Big(g_{\frac{h}{n!}\gamma_{m}}\Big)\bigg|\leq\frac{\epsilon}{3}\,.
\end{equation}
By \eqref{eq:5.8} we have:
\begin{equation}\label{eq:5.18}
\underset{h\neq0}{\sum_{h=-H(n)}^{H(n)}}c_{h,n}(m)\mathfrak{M}_{X}\Big(g_{\frac{h}{n!}\gamma_{m}}\Big)\underset{X\to\infty}{\longrightarrow}0
\end{equation}
and since $\mathcal{L}\big({\alpha},m\big)$ exists, on letting $X\to\infty$ in \eqref{eq:5.17} we deduce that for any $n\geq N_{\epsilon}$:
\begin{equation}\label{eq:5.19}
\bigg|\mathcal{L}\big({\alpha},m\big)-c_{0,n}(m)\bigg|\leq\frac{\epsilon}{3}\,.
\end{equation}
Now, for $n\geq N_{\epsilon}$  we have by \eqref{eq:5.9}:
\begin{equation}\label{eq:5.20}
\Bigg|c_{0,n}(m)-\frac{1}{n!}\bigintssss\limits_{0}^{n!}\exp{\Big(2\pi i\alpha\upphi_{q,m}(t)\Big)}\textit{d}t\Bigg|\leq\frac{\epsilon}{6}
\end{equation}
and so, for any $n\geq N_{\epsilon}$ :
\begin{equation}\label{eq:5.21}
\Bigg|\mathcal{L}\big({\alpha},m\big)-\frac{1}{n!}\bigintssss\limits_{0}^{n!}\exp{\Big(2\pi i\alpha\upphi_{q,m}(t)\Big)}\textit{d}t\Bigg|\leq\frac{\epsilon}{2}\,.
\end{equation}
Since $\epsilon$ in \eqref{eq:5.21} is arbitrary, we deduce that:
\begin{equation}\label{eq:5.22}
\mathcal{L}\big({\alpha},m\big)=\lim_{n\to\infty}\frac{1}{n!}\bigintssss\limits_{0}^{n!}\exp{\Big(2\pi i\alpha\upphi_{q,m}(t)\Big)}\textit{d}t\,.
\end{equation}
This completes the proof of Lemma 5.
\end{proof}
\noindent
With the aid of Lemma 5 we can now prove:
\begin{prop}
Fix a complex number $\alpha\in\mathbb{C}$. Then with the notation as in Lemma 5, we have for $\ell\geq1$:
\begin{equation}\label{eq:5.23}
\mathcal{L}\Big(\prod_{m\leq\ell}\mathscr{F}_{\alpha,m}\Big)=\prod_{m\leq \ell}\mathcal{L}\big(\alpha,m\big)\,.
\end{equation}
\end{prop}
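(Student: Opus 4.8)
The plan is to argue by induction on $\ell$, peeling off the top factor $\mathscr{F}_{\alpha,\ell}$ and feeding the result back into the Fourier-analytic machinery already assembled in the proof of Lemma 5. Written in the notation of that lemma, the left-hand side is the special value $\mathcal{L}\big(\prod_{m\leq\ell}\mathscr{F}_{\alpha,m}\big)=\mathcal{L}\big(\alpha,\{1,\ldots,\ell\};0\big)$ (take $\gamma=0$, so that $g_{0}\equiv1$). The base case $\ell=1$ is exactly the special case $\mathcal{L}\big(\alpha,\{1\};0\big)=\mathcal{L}(\alpha,1)$ recorded in \eqref{eq:5.7}. If $\ell$ is not square-free then $\upphi_{q,\ell}\equiv0$, so $\mathscr{F}_{\alpha,\ell}\equiv1$ and $\mathcal{L}(\alpha,\ell)=1$, and the identity for $\ell$ collapses at once to that for $\ell-1$; I may therefore assume $\mu^{2}(\ell)=1$.

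For the inductive step I would run the argument of Lemma 5 with $\mathscr{A}=\{1,\ldots,\ell\}$, $\mathscr{Q}=\{1,\ldots,\ell-1\}$, $m_{\ell}=\ell$ and $\gamma=0$. Expanding $t\mapsto\exp\big(2\pi i\alpha\upphi_{q,\ell}(n!t;n)\big)$ in its mean-convergent Fourier series and letting $X\to\infty$ in \eqref{eq:5.12} would yield, for every $\epsilon>0$, all sufficiently large $n$, and a suitable $H=H(n)$,
\begin{equation*}
\Big|\mathcal{L}\big(\alpha,\{1,\ldots,\ell\};0\big)-\sum_{h=-H}^{H}c_{h,n}(\ell)\,\mathcal{L}\big(\alpha,\{1,\ldots,\ell-1\};\tfrac{h}{n!}\gamma_{\ell}\big)\Big|\leq\tfrac{\epsilon}{3}.
\end{equation*}
The crux is then to show that only the term $h=0$ survives.

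The main obstacle is that Lemma 5, as stated, kills $\mathcal{L}\big(\alpha,\mathscr{A};\gamma\big)$ only for $\gamma$ outside the \emph{full} span $\mathscr{B}$, whereas the leftover frequency $\tfrac{h}{n!}\gamma_{\ell}=\tfrac{h}{n!}\sqrt{\ell}$ always lies \emph{inside} $\mathscr{B}$. What is needed is the sharper, localized vanishing
\begin{equation*}
\mathcal{L}\big(\alpha,\mathscr{Q};\gamma'\big)=0\qquad\text{for }\gamma'\notin\mathscr{B}_{\mathscr{Q}}:=\textit{span}_{\mathbb{Q}}\big\{\sqrt{m}:m\in\mathscr{Q},\ \mu^{2}(m)=1\big\}.
\end{equation*}
I would obtain this by repeating the induction of Lemma 5 verbatim, with the full span $\mathscr{B}$ replaced throughout by the smaller span $\mathscr{B}_{\mathscr{A}}$: the base case follows from \eqref{eq:5.8}, and in the inductive step the arithmetic relation $\gamma\notin\mathscr{B}_{\mathscr{Q}\cup\{m_{\ell}\}}\Rightarrow\gamma+\tfrac{h}{n!}\sqrt{m_{\ell}}\notin\mathscr{B}_{\mathscr{Q}}$ (valid for every integer $h$ when $m_{\ell}$ is square-free and $m_{\ell}\notin\mathscr{Q}$) propagates the vanishing. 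Since $\ell$ is square-free and exceeds every $m\leq\ell-1$, the $\mathbb{Q}$-linear independence of the square roots of distinct square-free integers gives $\sqrt{\ell}\notin\mathscr{B}_{\{1,\ldots,\ell-1\}}$, hence $\tfrac{h}{n!}\sqrt{\ell}\notin\mathscr{B}_{\{1,\ldots,\ell-1\}}$ for every $h\neq0$, so each such term is zero.

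With only $h=0$ remaining, the displayed inequality would reduce to $\big|\mathcal{L}\big(\alpha,\{1,\ldots,\ell\};0\big)-c_{0,n}(\ell)\,\mathcal{L}\big(\alpha,\{1,\ldots,\ell-1\};0\big)\big|\leq\tfrac{\epsilon}{3}$ for all large $n$. Since $c_{0,n}(\ell)\to\mathcal{L}(\alpha,\ell)$ as $n\to\infty$ (established in \eqref{eq:5.19}) and $\mathcal{L}\big(\alpha,\{1,\ldots,\ell-1\};0\big)=\prod_{m\leq\ell-1}\mathcal{L}(\alpha,m)$ is the bounded inductive hypothesis, letting $n\to\infty$ and then $\epsilon\to0$ would give $\mathcal{L}\big(\alpha,\{1,\ldots,\ell\};0\big)=\mathcal{L}(\alpha,\ell)\prod_{m\leq\ell-1}\mathcal{L}(\alpha,m)=\prod_{m\leq\ell}\mathcal{L}(\alpha,m)$, completing the induction.
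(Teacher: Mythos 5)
Your proof follows the same route as the paper's own: induction on $\ell$, approximation by the truncation $\upphi_{q,\ell}(\cdot\,;n)$, mean-convergent Fourier expansion of $t\mapsto\exp\big(2\pi i\alpha\upphi_{q,\ell}(n!t;n)\big)$, annihilation of the $h\neq 0$ modes, and then $n\to\infty$ in the surviving $h=0$ term via \eqref{eq:5.7} and the induction hypothesis. The one point where you depart from the paper is, in fact, a repair of it: to kill the $h\neq0$ terms the paper asserts that $\frac{h}{n!}\gamma_{\ell}\notin\mathscr{B}$ and invokes Lemma 5, but this assertion is false as written, since $\ell$ is square-free in the case at hand, so $\frac{h}{n!}\sqrt{\ell}$ is a nonzero rational multiple of a spanning element of $\mathscr{B}$ and therefore lies \emph{inside} $\mathscr{B}$. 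What the argument actually requires is precisely your localized statement, namely $\mathcal{L}\big(\alpha,\mathscr{Q};\gamma'\big)=0$ for $\gamma'\notin\mathscr{B}_{\mathscr{Q}}=\textit{span}_{\mathbb{Q}}\big\{\sqrt{m}:m\in\mathscr{Q},\ \mu^{2}(m)=1\big\}$, applied with $\gamma'=\frac{h}{n!}\sqrt{\ell}$, which avoids $\mathscr{B}_{\{1,\ldots,\ell-1\}}$ for every $h\neq0$ by the $\mathbb{Q}$-linear independence of square roots of distinct square-free integers. Your observation that the induction proving Lemma 5 goes through verbatim with $\mathscr{B}$ replaced by $\mathscr{B}_{\mathscr{A}}$ (base case \eqref{eq:5.8}, inductive step resting on the implication $\gamma\notin\mathscr{B}_{\mathscr{Q}\cup\{m_{\ell}\}}\Rightarrow\gamma+\frac{h}{n!}\sqrt{m_{\ell}}\notin\mathscr{B}_{\mathscr{Q}}$) is correct, and it supplies exactly the statement that the paper's step \eqref{eq:5.30} implicitly relies on. In short: your proof is correct, matches the paper's strategy, and is more careful than the paper at the one delicate point of the argument.
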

\vspace{0.1cm}
\begin{proof}
The proof is by induction on $\ell$, the results being trivial for $\ell=0$. Suppose the claim holds for a particular value $\ell-1$, and write $\mathscr{A}=\big\{1,2,\ldots,\ell\big\}=\mathscr{Q}\cup\big\{\ell\big\}$. If $\mu(\ell)=0$ then $\upphi_{q,\ell}\big(t\big)\equiv0$, and so the claim holds trivially for the value $\ell$ as well. Suppose now that $\mu^{2}(\ell)=1$, and let $\epsilon>0$ be arbitrary. By Lemma 4 we may find $N=N_{\epsilon,\ell}$ such that for $n\geq N$ and $t\in\mathbb{R}$:
\begin{equation}\label{eq:5.24}
\Big|\exp{\Big(2\pi i\alpha\Big\{\upphi_{q,\ell}\big(t\big)-\upphi_{q,\ell}\big(t;n\big)\Big\}\Big)}-1\Big|\leq\frac{\epsilon}{3}\exp{\Big(-16\pi\ell a_{q}|\alpha|\Big)}\,.
\end{equation}
Thus, for any $n\geq N$ and any $X>0$:
\begin{equation}\label{eq:5.25}
\Bigg|\mathfrak{M}_{X}\Big(\prod_{m\in\mathscr{A}}\mathscr{F}_{\alpha,m}\Big)-\frac{1}{X}\bigintssss\limits_{X}^{2X}\exp{\Big(2\pi i\alpha\upphi_{q,\ell}\big(\gamma_{\ell}x^{2};n\big)\Big)}\prod_{m\in\mathscr{Q}}\mathscr{F}_{\alpha,m}(x)\textit{d}x\Bigg|\leq\frac{\epsilon}{3}
\end{equation}
From Lemma 5 with the particular value $\gamma=0$ we know that $\mathcal{L}({\alpha},\mathscr{A};0)$ exists, and so we may find $T=T_{\epsilon,\ell}$ such that for all $X\geq T$ :
\begin{equation}\label{eq:5.26}
\bigg|\mathfrak{M}_{X}\Big(\prod_{m\in\mathscr{A}}\mathscr{F}_{\alpha,m}\Big)-\mathcal{L}({\alpha},\mathscr{A};0)\bigg|\leq\frac{\epsilon}{3}\,.
\end{equation}
Hence, for any $n\geq N$ and any $X\geq T$ :
\begin{equation}\label{eq:5.27}
\bigg|\mathcal{L}({\alpha},\mathscr{A};0)-\frac{1}{X}\bigintssss\limits_{X}^{2X}\exp{\Big(2\pi i\alpha\upphi_{q,\ell}\big(\gamma_{\ell}x^{2};n\big)\Big)}\prod_{m\in\mathscr{Q}}\mathscr{F}_{\alpha,m}(x)\textit{d}x\bigg|\leq\frac{2\epsilon}{3}\,.
\end{equation}
Now, let us fix an integer $n\geq N$. The function $t\mapsto\exp{\Big(2\pi i\alpha\upphi_{q,\ell}\big(n!t;n\big)\Big)}$ is continuous and $1$-periodic, and has a Fourier series which
converges to it in the mean:
\begin{equation*}
\begin{split}
\lim_{H\to\infty}\bigintssss\limits_{0}^{1}\Big|\exp{\Big(2\pi i\alpha\upphi_{q,\ell}\big(n!t;n\big)\Big)}-\sum_{h=-H}^{H}c_{h,n}(\ell)\exp{\big(2\pi iht\big)}\Big|\textit{d}t=0
\end{split}
\end{equation*}
where the Fourier coefficients are given by:
\begin{equation*}
c_{h,n}(\ell)=\frac{1}{n!}\bigintssss\limits_{0}^{n!}\exp{\bigg(2\pi i\Big(\alpha\upphi_{q,\ell}(t;n)-\frac{h}{n!}t\Big)\bigg)}\textit{d}t\,.
\end{equation*}
In particular, for any $X\geq\sqrt{\frac{n!}{\gamma_{\ell}}}$ we have:
\begin{equation}\label{eq:5.28}
\begin{split}
&\Bigg|\frac{1}{X}\bigintssss\limits_{X}^{2X}\bigg\{\exp{\Big(2\pi i\alpha\upphi_{q,\ell}\big(\gamma_{\ell}x^{2};n\big)\Big)}-\sum_{h=-H}^{H}c_{h,n}(\ell)\exp{\Big(2\pi i\frac{h}{n!}\gamma_{\ell}x^{2}\Big)}\bigg\}\prod_{m\in\mathscr{Q}}\mathscr{F}_{\alpha,m}(x)\textit{d}x\Bigg|\leq\\
&\leq2\exp{\Big(8\pi(\ell-1) a_{q}|\alpha|\Big)}\bigintssss\limits_{0}^{1}\Big|\exp{\Big(2\pi i\alpha\upphi_{q,\ell}\big(n!t;n\big)\Big)}-\sum_{h=-H}^{H}c_{h,n}(\ell)\exp{\big(2\pi iht\big)}\Big|\textit{d}t\underset{H\to\infty}{\longrightarrow}0\,.
\end{split}
\end{equation}
We may thus find some $H=H_{\epsilon,\ell,n}$ such that for all $X\geq\textit{max}\Big\{T,\sqrt{\frac{n!}{\gamma_{\ell}}}\Big\}$:
\begin{equation}\label{eq:5.29}
\bigg|\mathcal{L}({\alpha},\mathscr{A};0)-\sum_{h=-H}^{H}c_{h,n}(\ell)\mathfrak{M}_{X}\Big(g_{\frac{h}{n!}\gamma_{\ell}}\prod_{m\in\mathscr{Q}}\mathscr{F}_{\alpha,m}\Big)\bigg|\leq\epsilon\,.
\end{equation}
Now, for $h\neq0$ we have that $\frac{h}{n!}\gamma_{\ell}\notin\mathscr{B}$ so by Lemma 5 with the particular value $\gamma=\frac{h}{n!}\gamma_{\ell}$:
\begin{equation}\label{eq:5.30}
\underset{h\neq0}{\sum_{h=-H}^{H}}c_{h,n}(\ell)\mathfrak{M}_{X}\Big(g_{\frac{h}{n!}\gamma_{\ell}}\prod_{m\in\mathscr{Q}}\mathscr{F}_{\alpha,m}\Big)\underset{X\to\infty}{\longrightarrow}0\,.
\end{equation}
Thus, on letting $X\to\infty$ in \eqref{eq:5.29}, and referring once more to Lemma 5 this time with the particular value $\gamma=0$, we obtain:
\begin{equation}\label{eq:5.31}
\bigg|\mathcal{L}({\alpha},\mathscr{A};0)-c_{0,n}(\ell)\mathcal{L}({\alpha},\mathscr{Q};0)\bigg|\leq\epsilon\,.
\end{equation}
By our induction hypothesis:
\begin{equation}\label{eq:5.32}
\mathcal{L}({\alpha},\mathscr{Q};0)=\prod_{m\in\mathscr{Q}}\mathcal{L}({\alpha},m)\,.
\end{equation}
Letting $n\to\infty$ in \eqref{eq:5.31}, we obtain by \eqref{eq:5.7} in Lemma 5:
\begin{equation}\label{eq:5.33}
\bigg|\mathcal{L}\Big(\prod_{m\leq\ell}\mathscr{F}_{\alpha,m}\Big)-\prod_{m\leq \ell}\mathcal{L}\big(\alpha,m\big)\bigg|\leq\epsilon\,.
\end{equation}
Since $\epsilon$ is arbitrary, we obtain \eqref{eq:5.23}. This concludes the proof.
\end{proof}
\noindent
With the aid of Lemma 5, and repeating the exact same arguments used to prove Proposition 4, one obtains:
\begin{prop}
For any collection of distinct integers $m_{1},\ldots,m_{s}\geq1$, and any collection of (not necessarily distinct) integers $\ell_{1},\ldots,\ell_{s}\geq1$, one has:
\begin{equation}\label{eq:5.34}
\lim_{X\to\infty}\frac{1}{X}\bigintssss\limits_{X}^{2X}\prod_{i=1}^{s}\upphi^{\ell_{i}}_{q,m_{i}}\big(\gamma_{m}x^{2}\big)\textit{d}x=\prod_{i=1}^{s}\mathcal{Q}_{q}(m_{i},\ell_{i})
\end{equation}
where for integers $m,\ell\geq1$:
\begin{equation*}
\mathcal{Q}_{q}(m,\ell)\overset{\textit{def}}{=}\lim_{n\to\infty}\frac{1}{n!}\bigintssss\limits_{0}^{n!}\upphi^{\ell}_{q,m}(t)\textit{d}t\,.
\end{equation*}
\end{prop}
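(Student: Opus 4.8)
The plan is to establish the factorization by setting up the exact analogue of Lemma 5 for the bounded continuous functions $\upphi^{\ell}_{q,m}$ in place of the exponentials $\mathscr{F}_{\alpha,m}$, and then to run the induction of Proposition 4 almost verbatim. Concretely, for distinct square-free integers $m_1,\dots,m_s$, exponents $\ell_1,\dots,\ell_s\geq 1$, and a real number $\gamma$, I would first prove that the limit $\lim_{X\to\infty}\mathfrak{M}_X\big(g_\gamma\prod_{i}\upphi^{\ell_i}_{q,m_i}\big)$ exists and vanishes unless $\gamma$ lies in $\mathrm{span}_{\mathbb{Q}}\{\sqrt{m_1},\dots,\sqrt{m_s}\}$. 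Any non-square-free $m_i$ may be discarded at the outset, since then $\upphi_{q,m_i}\equiv 0$ forces both sides of \eqref{eq:5.34} to vanish. The only input needed to transport the Lemma 5 argument is that $\upphi^{\ell}_{q,m}$ inherits the two structural properties of $\mathscr{F}_{\alpha,m}$ used there: it is uniformly bounded, by \eqref{eq:5.5}, and it is uniformly well approximated by its truncation $\upphi^{\ell}_{q,m}(\cdot\,;n)$, which follows from \eqref{eq:5.6} together with the identity $a^{\ell}-b^{\ell}=(a-b)\sum_{j=0}^{\ell-1}a^{j}b^{\ell-1-j}$ and the boundedness of both $\upphi_{q,m}$ and $\upphi_{q,m}(\cdot\,;n)$, yielding an error $\ll_{\ell}n^{-1/2}$ uniformly in $t$.

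With these properties in hand, the induction on $s$ proceeds as in Lemma 5, with the empty-product base case supplied by \eqref{eq:5.8}. I would single out the factor $\upphi^{\ell_s}_{q,m_s}$, replace it by its truncation at level $n$ with an error controlled uniformly in $X$ by the previous paragraph, and then use that $t\mapsto\upphi^{\ell_s}_{q,m_s}(n!\,t;n)$ is continuous and $1$-periodic to expand it in a mean-convergent Fourier series. Writing $\gamma_{m_s}x^2=n!\cdot(\gamma_{m_s}x^2/n!)$ converts the truncated factor into a finite linear combination of the functions $g_{(h/n!)\gamma_{m_s}}(x)$, so the mean value reduces to a finite sum of terms $c_{h,n}\,\mathfrak{M}_X\big(g_{\gamma+(h/n!)\gamma_{m_s}}\prod_{i<s}\upphi^{\ell_i}_{q,m_i}\big)$. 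The induction hypothesis guarantees each of these converges as $X\to\infty$, so Cauchy's criterion yields existence of the limit for the full product; and if $\gamma\notin\mathrm{span}_{\mathbb{Q}}\{\sqrt{m_1},\dots,\sqrt{m_s}\}$ then every shifted frequency $\gamma+(h/n!)\sqrt{m_s}$ still lies outside $\mathrm{span}_{\mathbb{Q}}\{\sqrt{m_1},\dots,\sqrt{m_{s-1}}\}$, so each term vanishes by the inductive vanishing statement.

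To deduce \eqref{eq:5.34} I would specialize to $\gamma=0$ and mimic the proof of Proposition 4. Singling out $\upphi^{\ell_s}_{q,m_s}$ and Fourier-expanding its truncation as above, the terms with $h\neq 0$ carry the frequency $(h/n!)\sqrt{m_s}$, which by the $\mathbb{Q}$-linear independence of $\{\sqrt{m}:\mu^2(m)=1\}$ does not belong to $\mathrm{span}_{\mathbb{Q}}\{\sqrt{m_i}:i<s\}$; hence the corresponding mean values vanish by the vanishing statement just proved, and only the term $h=0$ survives, contributing $c_{0,n}(m_s)\,\mathfrak{M}_X\big(\prod_{i<s}\upphi^{\ell_i}_{q,m_i}\big)$. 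Letting first $X\to\infty$ and invoking the induction hypothesis, then $n\to\infty$, the constant Fourier coefficient $c_{0,n}(m_s)=\frac{1}{n!}\int_0^{n!}\upphi^{\ell_s}_{q,m_s}(t;n)\,\mathrm{d}t$ converges to $\mathcal{Q}_q(m_s,\ell_s)$; this convergence simultaneously shows that $\mathcal{Q}_q(m,\ell)$ is well defined, and the product structure $\prod_i\mathcal{Q}_q(m_i,\ell_i)$ emerges.

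The crux of the argument, and the one place where genuine arithmetic enters, is the vanishing of the $h\neq 0$ terms in the last step: it rests entirely on the fact that a nonzero rational multiple of $\sqrt{m_s}$ cannot be written as a rational combination of $\sqrt{m_1},\dots,\sqrt{m_{s-1}}$ for distinct square-free integers. This is precisely where one must track the \emph{restricted} span $\mathrm{span}_{\mathbb{Q}}\{\sqrt{m_i}:i<s\}$ rather than the full space $\mathscr{B}$, since $(h/n!)\sqrt{m_s}$ does itself lie in $\mathscr{B}$. Once this frequency-separation is secured, the surviving contribution is forced to be diagonal across the distinct indices $m_i$, which is exactly the source of the multiplicative factorization. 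The remaining ingredients—the uniform truncation estimate for powers and the Fresnel-type limit \eqref{eq:5.8}—are routine and carry over directly from the analysis already performed for Lemma 5.
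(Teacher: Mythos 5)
Your proposal is correct and is essentially the paper's own argument: the paper proves Proposition 5 by ``repeating the exact same arguments used to prove Proposition 4,'' i.e.\ the same truncation--Fourier-expansion--induction scheme from Lemma 5, with the exponentials $\mathscr{F}_{\alpha,m}$ replaced by the bounded, uniformly truncation-approximable powers $\upphi^{\ell}_{q,m}$, exactly as you do. Your one refinement---tracking the \emph{restricted} span $\textit{span}_{\mathbb{Q}}\{\sqrt{m_{i}}:i<s\}$ rather than all of $\mathscr{B}$---is in fact the form of the vanishing statement that the paper's own induction implicitly needs and proves (its assertion that $\frac{h}{n!}\gamma_{\ell}\notin\mathscr{B}$ is literally false, since that number lies in $\mathscr{B}$), so your formulation is a faithful, and indeed more careful, rendering of the intended proof.
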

\subsection{Auxiliary estimates}
In this subsection we establish several auxiliary estimates which will be needed for the proof of the main results in this paper.
\begin{lem}
Let $m\geq1$ be an integer. Then with the notation as in Proposition 5, we have:\\\\
\textit{(1)} $\upphi_{q,m}(\cdot)$ has limiting mean value equal to zero:
\begin{equation}\label{eq:5.35}
\mathcal{Q}_{q}(m,1)=0\,.
\end{equation}
\textit{(2)} The limiting mean value of $\upphi^{2}_{q,m}(\cdot)$ in the case where $q\equiv0\,(2)$ is given by:
\begin{equation}\label{eq:5.36}
\begin{split}
\mathcal{Q}_{q}(m,2)=\frac{1}{2}\Bigg(\frac{\pi^{q-1}}{2\Gamma(q)}\Bigg)^{2}\Bigg\{\underset{\,\,\,(d,2mk^{2})=1}{\sum_{d,k=1}^{\infty}}\frac{r^{2}_{2}\big(mk^{2},d;q\big)}{d^{2q-3}k^{3}}+2^{2q}\underset{\,\,d\equiv0\,(4)}{\underset{\,\,(d,mk^{2})=1}{\sum_{d,k=1}^{\infty}}}\frac{r^{2}_{2}\big(mk^{2},d;q\big)}{d^{2q-3}k^{3}}\Bigg\}\frac{\mu^{2}(m)}{m^{3/2}}
\end{split}
\end{equation}
and in the case where $q\equiv1\,(2)$ it is given by:
\begin{equation}\label{eq:5.37}
\begin{split}
\mathcal{Q}_{q}(m,2)=\frac{1}{2}\Bigg(\frac{\pi^{q-1}}{2\Gamma(q)}\Bigg)^{2}\Bigg\{\underset{\,\,\,(d,2mk^{2})=1}{\sum_{d,k=1}^{\infty}}\frac{r^{2}_{2}\big(mk^{2},d;q\big)}{d^{2q-3}k^{3}}+2^{2q}\underset{\,\,d\equiv0\,(4)}{\underset{\,\,(d,mk^{2})=1}{\sum_{d,k=1}^{\infty}}}\frac{r^{2}_{2,\chi}\big(mk^{2},d;q\big)}{d^{2q-3}k^{3}}\Bigg\}\frac{\mu^{2}(m)}{m^{3/2}}\,.
\end{split}
\end{equation}
\textit{(3)} In addition, we have the lower and upper bound estimates:
\begin{equation}\label{eq:5.38}
\begin{split}
\sum_{m\geq\ell}\mathcal{Q}_{q}(m,2)\geq \mathrm{A}_{q}\ell^{-1/2}\log{2\ell}
\end{split}
\end{equation}
\begin{equation}\label{eq:5.39}
\begin{split}
\sum_{m\geq\ell}\mathcal{Q}_{q}(m,2)\leq \mathrm{B}_{q}\ell^{-1/2}\log{2\ell}
\end{split}
\end{equation}
for some constants $A_{q},B_{q}>0$.
\end{lem}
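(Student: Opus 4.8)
The three assertions will be handled separately, parts (1) and (3) being relatively routine while part (2) carries the main arithmetic weight. For (1), note first that by Lemma 4 and the inequality $r_{2}(mk^{2})\leq r_{2}(m)\big(\sum_{\ell\mid k}1\big)^{2}$ the series defining $\upphi_{q,m}$ converges absolutely and uniformly, its frequencies being the nonzero rationals $k/d$. For any such frequency and any $n\geq d$ one has $d\mid n!$, whence $\frac{1}{n!}\int_{0}^{n!}\sin\big(2\pi\frac{k}{d}t-\frac{\pi}{4}\big)\,dt=0$, and likewise for the cosine. Interchanging the absolutely convergent sum with the integral and letting $n\to\infty$ (dominated convergence, the dominating bound being $\sum_{d,k}|c_{d,k}|<\infty$) then yields $\mathcal{Q}_{q}(m,1)=0$, which is \eqref{eq:5.35}.

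For (2) I would expand $\upphi_{q,m}^{2}(t)$ into the absolutely convergent double series of products of its trigonometric factors and integrate termwise over $[0,n!]$. By the product-to-sum identities each product splits into a term of frequency $\frac{k_{1}}{d_{1}}+\frac{k_{2}}{d_{2}}$, whose mean vanishes, and a term of frequency $\frac{k_{1}}{d_{1}}-\frac{k_{2}}{d_{2}}$, whose mean is $0$ unless $\frac{k_{1}}{d_{1}}=\frac{k_{2}}{d_{2}}$. On this diagonal a sine-sine or cosine-cosine product has mean $\frac{1}{2}$, while a sine-cosine product has mean $\frac{1}{2}\sin 0=0$; the vanishing of the mixed products is precisely what decouples the sine-part from the cosine-part of $\upphi_{q,m}$ and produces the separate $r_{2}^{2}$ and $r_{2,\chi}^{2}$ sums in \eqref{eq:5.37} for odd $q$. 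After this reduction $\mathcal{Q}_{q}(m,2)$ equals $\frac{1}{2}\mu^{2}(m)m^{-3/2}$ times a diagonal sum over all quadruples $(d_{1},k_{1},d_{2},k_{2})$ with $k_{1}d_{2}=k_{2}d_{1}$.

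The main obstacle is the reorganization of this diagonal sum into the coprimality-restricted shape of \eqref{eq:5.36}--\eqref{eq:5.37}. I would parametrize equal frequencies by their lowest-terms representative and sum over the common scaling parameter, invoking the multiplicative behaviour of $r_{2}(mk^{2},d;q)$ and the residue values of $\xi(d;q)$ modulo $4$; the scaling sum produces Euler factors that absorb the constants $(1-2^{-q})\zeta(q)$ hidden in $\varrho_{q}$ (respectively $L(q,\chi)$ in $\varrho_{\chi,q}$), leaving the clean factor $\big(\frac{\pi^{q-1}}{2\Gamma(q)}\big)^{2}$ and the conditions $(d,2mk^{2})=1$ and $d\equiv0\,(4)$. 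This is exactly the single-$m$ refinement of the second-moment evaluation underlying \eqref{eq:1.3}--\eqref{eq:1.4}; as a consistency check, summing the resulting $\mathcal{Q}_{q}(m,2)$ over $m$ through the bijection $n=mk^{2}$, $\mu^{2}(m)=1$, reproduces those formulas. An equivalent route is to write $\mathcal{Q}_{q}(m,2)=\lim_{X\to\infty}\frac{1}{X}\int_{X}^{2X}\upphi_{q,m}^{2}(\gamma_{m}x^{2})\,dx$ via Proposition 5 and apply Lemma 2 with the appropriate $\nu,\eta,\alpha,\beta$, so that the coprimality $(d,n)=1$ is already built into $\Xi_{H}$ and only its explicit evaluation remains.

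For (3) I would first prove the pointwise two-sided bound $\mathcal{Q}_{q}(m,2)\asymp_{q}\mu^{2}(m)m^{-3/2}r_{2}^{2}(m)$. The upper bound comes from \eqref{eq:5.36}--\eqref{eq:5.37} using $r_{2}(mk^{2},d;q)\leq r_{2}(mk^{2})\leq r_{2}(m)\big(\sum_{\ell\mid k}1\big)^{2}$ together with the convergence of $\sum_{k}\big(\sum_{\ell\mid k}1\big)^{4}k^{-3}$ and of $\sum_{d}d^{-(2q-3)}$ (note $2q-3\geq3$). The lower bound comes from retaining only the term $d=k=1$ and using $r_{2}(m,1;q)\gg_{q}r_{2}(m)$, which holds since in every representation $a^{2}+b^{2}=m$ the larger coordinate contributes weight at least $2^{-(q-1)/2}$. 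It then remains to show $\sum_{m\geq\ell}\mu^{2}(m)m^{-3/2}r_{2}^{2}(m)\asymp\ell^{-1/2}\log 2\ell$, which I would deduce by partial summation from the estimate $\sum_{m\leq x}\mu^{2}(m)r_{2}^{2}(m)\asymp x\log x$; the latter follows from the double pole at $s=1$ of the Euler product $\sum_{m}\mu^{2}(m)r_{2}^{2}(m)m^{-s}=(1+2^{-s})\prod_{p\equiv1\,(4)}(1+4p^{-s})$. The two pointwise bounds then give \eqref{eq:5.38} and \eqref{eq:5.39}.
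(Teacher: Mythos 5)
Your proposal is correct and follows essentially the same route as the paper's proof: the means vanish because the rational frequencies $k/d$ average out over $[0,n!]$; the second moment is obtained by extracting the diagonal $k_{1}/d_{1}=k_{2}/d_{2}$ (with the sine--cosine cross terms vanishing), reparametrizing in lowest terms, and summing the scaling parameter so that $\sum_{b}\xi\big(bd;q\big)b^{-q}$ cancels the $\zeta(q)$ (resp. $L(q,\chi)$) hidden in $\varrho_{q}$ (resp. $\varrho_{\chi,q}$); and the two-sided tail bounds come from the $d=k=1$ term below, a crude bound above, and partial summation against $\sum_{n\leq N}\mu^{2}(n)r_{2}^{2}(n)\asymp N\log N$. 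The only organizational difference is that the paper first replaces $\upphi_{q,m}$ by the truncation $\upphi_{q,m}(\cdot\,;n)$ via Lemma 4, so that all means are exact at finite $n$, whereas you work with the full series and justify the interchange by dominated convergence; both are sound.
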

\begin{proof}
\textit{(1)} If $\mu^{2}(m)=0$ then $\upphi_{q,m}(t)\equiv0$, and so the claim is obvious. Suppose then that $\mu^{2}(m)=1$. By Lemma 4, we have for any integer $n\geq1$:
\begin{equation}\label{eq:5.40}
\Bigg|\frac{1}{n!}\bigintssss\limits_{0}^{n!}\upphi_{q,m}(t)-\upphi_{q,m}(t;n)\textit{d}t\Bigg|\leq\frac{1}{n!}\bigintssss\limits_{0}^{n!}\Big|\upphi_{q,m}(t)-\upphi_{q,\ell}(t;n)\Big|\textit{d}t\leq a_{q}\frac{r_{2}(m)}{n^{1/2}m^{3/4}}\,.
\end{equation}
Since $n!/d\in\mathbb{N}$ for every integer $1\leq d\leq n$, we obtain in the case where $q\equiv0\,(2)$:
\begin{equation}\label{eq:5.41}
\frac{1}{n!}\bigintssss\limits_{0}^{n!}\upphi_{q,m}(t;n)\textit{d}t=\frac{\varrho_{q}}{2\pi m^{3/4}}\sum_{d\leq n}\sum_{k=1}^{\infty}\frac{\xi\big(d;q\big)r_{2}\big(mk^{2},d;q\big)}{d^{q-3/2}k^{3/2}}\bigintssss\limits_{0}^{1}\sin{\Big(2\pi k\frac{n!}{d}t-\frac{\pi}{4}\Big)}\textit{d}t=0
\end{equation}
and in the case where $q\equiv1\,(2)$:
\begin{equation}\label{eq:5.42}
\begin{split}
\frac{1}{n!}\bigintssss\limits_{0}^{n!}\upphi_{q,m}(t;n)\textit{d}t&=2^{q-2}\frac{\varrho_{\chi,q}}{\pi m^{3/4}}\sum_{d\leq n}\sum_{k=1}^{\infty}\frac{\chi(d)r_{2}\big(mk^{2},d;q\big)}{d^{q-3/2}k^{3/2}}\bigintssss\limits_{0}^{1}\sin{\Big(2\pi k\frac{n!}{d}t-\frac{\pi}{4}\Big)}\textit{d}t+\\
&+(-1)^{\frac{q+1}{2}}2^{2q-2}\frac{\varrho_{\chi,q}}{\pi m^{3/4}}\underset{\,\,d\equiv0\,(4)}{\sum_{d\leq n}}\sum_{k=1}^{\infty}\frac{r_{2,\chi}\big(mk^{2},d;q\big)}{d^{q-3/2}k^{3/2}}\bigintssss\limits_{0}^{1}\cos{\Big(2\pi k\frac{n!}{d}t-\frac{\pi}{4}\Big)}\textit{d}t=0
\end{split}
\end{equation}
Thus, the LHS of \eqref{eq:5.40} takes the form:
\begin{equation}\label{eq:5.43}
\Bigg|\frac{1}{n!}\bigintssss\limits_{0}^{n!}\upphi_{q,m}(t)\textit{d}t\Bigg|\leq a_{q}\frac{r_{2}(m)}{n^{1/2}m^{3/4}}\,.
\end{equation}
Letting $n\to\infty$ in \eqref{eq:5.43} we obtain \eqref{eq:5.35}.\\
\textit{(2)} We may clearly assume that $\mu^{2}(m)=1$, for otherwise $\upphi_{q,m}(t)\equiv0$ and the result is trivial. By Lemma 4, we have for any integer $n\geq1$:
\begin{equation}\label{eq:5.44}
\frac{1}{n!}\bigintssss\limits_{0}^{n!}\upphi^{2}_{q,m}(t)\textit{d}t=\frac{1}{n!}\bigintssss\limits_{0}^{n!}\upphi^{2}_{q,m}(t;n)\textit{d}t+O\Bigg(\frac{r^{2}_{2}(m)}{n^{1/2}m^{3/2}}\Bigg)\,.
\end{equation}
We note that $r_{2}\big(sh^{2},hd;q\big)=r_{2}\big(s,d;q\big)$ and $r_{2,\chi}\big(sh^{2},hd;q\big)=\chi(h)r_{2,\chi}\big(s,d;q\big)$ for integers $s,h,d\geq1$. In the case where $q\equiv0\,(2)$ we have:
\begin{equation*}
\begin{split}
\frac{1}{n!}\bigintssss\limits_{0}^{n!}\upphi^{2}_{q,m}(t;n)\textit{d}t=\frac{1}{2}\Big(\frac{\varrho_{q}}{2\pi}\Big)^{2}\Bigg\{\sum_{d\leq n}\underset{\,\,(d,k)=1}{\sum_{k=1}^{\infty}}\frac{r^{2}_{2}\big(mk^{2},d;q\big)}{d^{2q-3}k^{3}}\Bigg(\sum_{\,\,b\leq n/d}\frac{\xi\big(bd;q\big)}{b^{q}}\Bigg)^{2}\Bigg\}\frac{\mu^{2}(m)}{m^{3/2}}=
\end{split}
\end{equation*}
\begin{equation}\label{eq:5.45}
\begin{split}
=\frac{1}{2}\Bigg(\frac{\pi^{q-1}}{2\Gamma(q)}\Bigg)^{2}\Bigg\{\underset{\,\,\,(d,2mk^{2})=1}{\sum_{d,k=1}^{\infty}}\frac{r^{2}_{2}\big(mk^{2},d;q\big)}{d^{2q-3}k^{3}}+2^{2q}\underset{\,\,d\equiv0\,(4)}{\underset{\,\,(d,mk^{2})=1}{\sum_{d,k=1}^{\infty}}}\frac{r^{2}_{2}\big(mk^{2},d;q\big)}{d^{2q-3}k^{3}}\Bigg\}\frac{\mu^{2}(m)}{m^{3/2}}+O\Bigg(\frac{r^{2}_{2}(m)}{n^{2}m^{3/2}}\log{2n}\Bigg)\,.
\end{split}
\end{equation}
In \eqref{eq:5.45} we have extended the summation over $1\leq d\leq n$ and $1\leq b\leq n/d$ all the way to infinity, referred to the definition of $\varrho_{q}$, and used the following identity valid for any integer $d\geq1$:
\begin{equation*}
\begin{split}
\sum_{b=1}^{\infty}\frac{\xi\big(bd;q\big)}{b^{q}}=\Big(1-2^{-q}\Big)\zeta(q)\Big\{\mathds{1}_{d\equiv1(2)}+(-1)^{\frac{q}{2}}2^{q}\mathds{1}_{d\equiv0(4)}\Big\}\,.
\end{split}
\end{equation*}
Note that in the second line of \eqref{eq:5.45} we have restricted the summation over the variable $d$ to those satisfying the additional coprimality condition $(d,m)=1$. We may do so because if $mk^{2}=a^{2}+b^{2}$ with $b\equiv0(d)$ where $d,k\geq1$ are integers satisfying $(d,k)=1$, then since $m$ is square-free we must have that $(d,m)=1$. Letting $n\to\infty$ in \eqref{eq:5.44} we obtain from \eqref{eq:5.45} that for $q\equiv0\,(2)$:
\begin{equation}\label{eq:5.46}
\begin{split}
\mathcal{Q}_{q}(m,2)=\frac{1}{2}\Bigg(\frac{\pi^{q-1}}{2\Gamma(q)}\Bigg)^{2}\Bigg\{\underset{\,\,\,(d,2mk^{2})=1}{\sum_{d,k=1}^{\infty}}\frac{r^{2}_{2}\big(mk^{2},d;q\big)}{d^{2q-3}k^{3}}+2^{2q}\underset{\,\,d\equiv0\,(4)}{\underset{\,\,(d,mk^{2})=1}{\sum_{d,k=1}^{\infty}}}\frac{r^{2}_{2}\big(mk^{2},d;q\big)}{d^{2q-3}k^{3}}\Bigg\}\frac{\mu^{2}(m)}{m^{3/2}}\,.
\end{split}
\end{equation}
In the case $q\equiv1\,(2)$, we obtain in a similar way: 
\begin{equation}\label{eq:5.47}
\begin{split}
\frac{1}{n!}\bigintssss\limits_{0}^{n!}\upphi^{2}_{q,m}(t;n)\textit{d}t=&\frac{1}{2}\Bigg(\frac{\pi^{q-1}}{2\Gamma(q)}\Bigg)^{2}\Bigg\{\underset{\,\,\,(d,2mk^{2})=1}{\sum_{d,k=1}^{\infty}}\frac{r^{2}_{2}\big(mk^{2},d;q\big)}{d^{2q-3}k^{3}}+2^{2q}\underset{\,\,d\equiv0\,(4)}{\underset{\,\,(d,mk^{2})=1}{\sum_{d,k=1}^{\infty}}}\frac{r^{2}_{2,\chi}\big(mk^{2},d;q\big)}{d^{2q-3}k^{3}}\Bigg\}\frac{\mu^{2}(m)}{m^{3/2}}+\\
&+O\Bigg(\frac{r^{2}_{2}(m)}{n^{2}m^{3/2}}\log{2n}\Bigg)\,.
\end{split}
\end{equation}
Letting $n\to\infty$ in \eqref{eq:5.44}  we obtain from \eqref{eq:5.47} that for $q\equiv1\,(2)$::
\begin{equation}\label{eq:5.48}
\begin{split}
\mathcal{Q}_{q}(m,2)=\frac{1}{2}\Bigg(\frac{\pi^{q-1}}{2\Gamma(q)}\Bigg)^{2}\Bigg\{\underset{\,\,\,(d,2k)=1}{\sum_{d,k=1}^{\infty}}\frac{r^{2}_{2}\big(mk^{2},d;q\big)}{d^{2q-3}k^{3}}+2^{2q}\underset{\,\,d\equiv0\,(4)}{\underset{\,\,(d,k)=1}{\sum_{d,k=1}^{\infty}}}\frac{r^{2}_{2,\chi}\big(mk^{2},d;q\big)}{d^{2q-3}k^{3}}\Bigg\}\frac{\mu^{2}(m)}{m^{3/2}}\,.
\end{split}
\end{equation}
\textit{(3)} Since for integers $m\geq1$ we have:
\begin{equation*}
\begin{split}
r_{2}\big(m,1;q\big)=\sum_{a^{2}+b^{2}=m}\bigg(\frac{|a|}{\sqrt{m}}\bigg)^{q-1}\geq\underset{|a|\geq|b|}{\sum_{a^{2}+b^{2}=m}}\bigg(\frac{|a|}{\sqrt{m}}\bigg)^{q-1}\geq2^{-\frac{q+1}{2}}r_{2}(m)
\end{split}
\end{equation*}
it follows from \eqref{eq:5.46} and \eqref{eq:5.48} that:
\begin{equation}\label{eq:5.49}
\begin{split}
\sum_{m\geq\ell}\mathcal{Q}_{q}(m,2)\geq\frac{1}{2}\Bigg(\frac{\pi^{q-1}}{2\Gamma(q)}\Bigg)^{2}\sum_{m\geq\ell}\frac{\mu^{2}(m)}{m^{3/2}}r^{2}_{2}\big(m,1;q\big)\geq\frac{1}{2^{q+4}}\Bigg(\frac{\pi^{q-1}}{\Gamma(q)}\Bigg)^{2}\sum_{m>2\ell}\frac{\mu^{2}(m)}{m^{3/2}}r^{2}_{2}(m)\,.
\end{split}
\end{equation}
From the upper bound \eqref{eq:5.5} in Lemma 4 we have:
\begin{equation}\label{eq:5.50}
\begin{split}
\sum_{m\geq\ell}\mathcal{Q}_{q}(m,2)\leq a^{2}_{q}\sum_{m\geq\ell}\frac{\mu^{2}(m)}{m^{3/2}}r^{2}_{2}(m)\,.
\end{split}
\end{equation}
The desired results now follow by partial summation, together with the estimate:
\begin{equation*}
\begin{split}
\sum_{n\leq N}\mu^{2}(n)r^{2}_{2}(n)\sim hN\log{N}\quad;\quad\textit{as } N\to\infty
\end{split}
\end{equation*}
where $h>0$ is a positive constant. This concludes the proof.
\end{proof}
\begin{lem}
Let $m,\ell\geq1$ be integers. Then:
\begin{equation}\label{eq:5.51}
\mathcal{Q}_{q}(m,\ell)=(-1)^{\ell}\Bigg(\frac{\pi^{q-1}}{4\Gamma(q)}\Bigg)^{\ell}\frac{\mu^{2}(m)}{m^{3\ell/4}}\sum_{a\,(8)}\cos{\Big(\frac{a\pi}{4}\Big)}\underset{\overset{\ell}{\underset{i=1}{\sum}}\textit{e}_{i}\equiv a\,(8)}{\sum_{\textit{e}_{1},\ldots,\textit{e}_{\ell}=\pm1}}\,\,\,\underset{\overset{\ell}{\underset{i=1}{\sum}}\textit{e}_{i}\epsilon_{q}(d_{i})\frac{k_{i}}{d_{i}}=0}{\sum_{d_{1},k_{1},\ldots,d_{\ell},k_{\ell}=1}^{\infty}}\,\,\prod_{i=1}^{\ell}\frac{\mathfrak{r}\big(mk^{2}_{i},d_{i}\,;q\big)}{d_{i}^{q-3/2}k_{i}^{3/2}}
\end{equation}
where for $q\equiv0\,(2)$ we define $\epsilon_{q}(d)=1$, and:
\begin{equation*}
\mathfrak{r}\big(mk^{2},d\,;q\big)=\left\{
        \begin{array}{ll}
            0& ;\,(k,d)\neq1\text{ or } d\equiv2\,(4)\\\\
            r_{2}\big(mk^{2},d;q\big)& ;\, (d,2)=1\\\\
            (-1)^{\frac{q}{2}}2^{q}r_{2}\big(mk^{2},d;q\big)& ;\, d\equiv0\,(4)
        \end{array}
    \right.
\end{equation*}
while for $q\equiv1\,(2)$ we define $\epsilon_{q}(d)=1$ if $(d,2)=1$ and $\epsilon_{q}(d)=-1$ otherwise, and:
\begin{equation*}
\mathfrak{r}\big(mk^{2},d\,;q\big)=\left\{
        \begin{array}{ll}
            0& ;\,(k,d)\neq1\text{ or } d\equiv2\,(4)\\\\
            \chi(d)r_{2}\big(mk^{2},d;q\big)& ;\, (d,2)=1\\\\
            (-1)^{\frac{q-1}{2}}2^{q}r_{2,\chi}\big(mk^{2},d;q\big)& ;\, d\equiv0\,(4)\,.
        \end{array}
    \right.
\end{equation*}
Note that for $\ell=1$ the sum in \eqref{eq:5.51} is void, so by definition $\mathcal{Q}_{q}(m,1)=0$ which is consistent with \eqref{eq:5.35} in Lemma 6. In addition we have:
\begin{equation}\label{eq:5.52}
\sum_{m=1}^{\infty}\mathcal{Q}_{q}(m,3)<0\,.
\end{equation}
where the series on the LHS of \eqref{eq:5.52} converges absolutely.
\end{lem}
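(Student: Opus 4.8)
The plan is to evaluate $\mathcal{Q}_q(m,\ell)=\lim_{n\to\infty}\frac{1}{n!}\int_0^{n!}\upphi_{q,m}^{\ell}(t)\,dt$ (Proposition 5) in closed form, and then read the sign of the third-moment sum off the result. We may assume $\mu^2(m)=1$, since otherwise $\upphi_{q,m}\equiv0$ and both sides of \eqref{eq:5.51} vanish. First I would replace $\upphi_{q,m}$ by its truncation $\upphi_{q,m}(\,\cdot\,;n)$: by \eqref{eq:5.5}, \eqref{eq:5.6} and $|x^\ell-y^\ell|\le\ell\max(|x|,|y|)^{\ell-1}|x-y|$ the two $\ell$-th powers differ in mean value by $O_{m,\ell}(n^{-1/2})$, which disappears as $n\to\infty$. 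The point of truncating is that every frequency of $\upphi_{q,m}(\,\cdot\,;n)$ is of the form $k/d$ with $d\le n$, so $d\mid n!$ and hence $\frac{1}{n!}\int_0^{n!}e^{2\pi i(\sum_i\sigma_i k_i/d_i)t}\,dt$ equals $1$ when $\sum_i\sigma_i k_i/d_i=0$ and $0$ otherwise.

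Next I would expand the power. Writing each factor $\sin(2\pi\tfrac{k}{d}t-\tfrac\pi4)$ and $\cos(2\pi\tfrac{k}{d}t-\tfrac\pi4)$ as a pair of exponentials $\tfrac12 e^{-i\sigma\phi}e^{2\pi i\sigma (k/d)t}$ with $\sigma=\pm1$ and $\phi=\tfrac{3\pi}{4}$ (sine) or $\tfrac\pi4$ (cosine), expanding $\upphi_{q,m}(\,\cdot\,;n)^{\ell}$ and integrating isolates the diagonal $\sum_i\sigma_i k_i/d_i=0$. On each surviving term I would reduce every $k_i/d_i$ to lowest terms and sum over the common multiplier; using $r_2(sh^2,hd;q)=r_2(s,d;q)$, $r_{2,\chi}(sh^2,hd;q)=\chi(h)r_{2,\chi}(s,d;q)$ together with $\sum_b\xi(bd;q)b^{-q}=(1-2^{-q})\zeta(q)\{\cdots\}$ (resp. $\sum_b\chi(bd)b^{-q}=\chi(d)L(q,\chi)$ in the odd case), exactly as in the proof of Lemma 6(2), the multiplier sums collapse, the constant $\tfrac{\varrho_q(1-2^{-q})\zeta(q)}{2\pi}=\tfrac{\pi^{q-1}}{2\Gamma(q)}$ (resp. the $\varrho_{\chi,q}$–$L(q,\chi)$ analogue) emerges per factor, and the weight $\mathfrak{r}(mk_i^2,d_i;q)$ appears. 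Combined with the $2^{-\ell}$ this gives the prefactor $\big(\tfrac{\pi^{q-1}}{4\Gamma(q)}\big)^{\ell}$, while the substitution $\sigma_i=e_i\epsilon_q(d_i)$ rewrites the diagonal as $\sum_i e_i\epsilon_q(d_i)k_i/d_i=0$. For the phase, the key computation for odd $q$ is that the surplus sign $(-1)^{(q+1)/2}$ in the cosine coefficient, relative to the $(-1)^{(q-1)/2}$ built into $\mathfrak{r}$, is absorbed by the identity $e^{-ie\,3\pi/4}=-e^{ie\,\pi/4}$, so sine and cosine factors carry the common phase $e^{-ie_i 3\pi/4}$ (for even $q$ only sines occur). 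The total phase is then $e^{-i\frac{3\pi}{4}\sum_i e_i}$; pairing $\vec e\leftrightarrow-\vec e$ turns it into $\cos(\tfrac{3\pi}{4}\sum_i e_i)$, and since $\cos(\tfrac{3\pi}{4}a)=(-1)^a\cos(\tfrac\pi4 a)=(-1)^{\ell}\cos(\tfrac\pi4 a)$ for $a\equiv\ell\,(2)$, grouping by $a=\sum_i e_i\pmod 8$ produces \eqref{eq:5.51}. Passage to the limit is justified by absolute convergence, which I would obtain from $|\mathfrak{r}(mk^2,d;q)|\le 2^q r_2(mk^2)\ll_{\varepsilon}(mk)^{\varepsilon}$, comparing the constrained sum to the convergent unconstrained product $\prod_i\sum_{d,k}(mk)^{\varepsilon}(d^{q-3/2}k^{3/2})^{-1}$ (finite since $q\ge3$ gives $q-\tfrac32>1$).

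For \eqref{eq:5.52} I would specialize to $\ell=3$, where $a=e_1+e_2+e_3\in\{\pm1,\pm3\}$ and $(-1)^{\ell}=-1$. When $q$ is even, $\epsilon_q\equiv1$, so the diagonal $\sum_i e_i k_i/d_i=0$ with all $k_i/d_i>0$ forbids $a=\pm3$; only $a=\pm1$ survive, each weighted by $\cos(\tfrac\pi4)=\tfrac1{\sqrt2}>0$, whence $\mathcal{Q}_q(m,3)=-c_q\,\mu^2(m)m^{-9/4}W_m$ with $c_q>0$ and $W_m=\sum_{k_1/d_1+k_2/d_2=k_3/d_3}\prod_i\mathfrak{r}(mk_i^2,d_i;q)\,d_i^{-(q-3/2)}k_i^{-3/2}$. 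Thus \eqref{eq:5.52} reduces to $\sum_m\mu^2(m)m^{-9/4}W_m>0$. A $2$-adic valuation analysis of $k_1/d_1+k_2/d_2=k_3/d_3$ with $(k_i,d_i)=1$ and each $d_i$ odd or $\equiv0\,(4)$ (as $\mathfrak{r}$ kills $d\equiv2\,(4)$) shows the number of $d_i\equiv0\,(4)$ is never exactly $1$. For $q\equiv0\,(4)$ every $\mathfrak{r}\ge0$, so $W_m\ge0$ and the all-odd terms make it strictly positive.

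The \textbf{main obstacle} is the sign control when $q\equiv2\,(4)$, and, in the odd case, the genuine occurrence of the $a=\pm3$ configurations (which arise precisely because $\epsilon_q(d)$ changes sign with the parity of $d$). For $q\equiv2\,(4)$ the remaining negative class in $W_m$ is the one with all three $d_i\equiv0\,(4)$, and one must show it is outweighed by the non-negative $j=0$ and $j=2$ terms; I expect this to follow from the strong suppression $\tfrac{2^q}{d^{q-3/2}}\le 8\cdot2^{-q}$ valid for $d\ge4$, which makes each $\equiv0\,(4)$ factor negligible against the unsuppressed $d_i=1$ contributions to the all-odd part of $W_m$, carried out uniformly in $q\ge3$. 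In the odd case one must run the analogous comparison for both $a=\pm1$ (net negative, dominant) and $a=\pm3$ (net positive, subdominant), while tracking the signs of $r_{2,\chi}$; this bookkeeping is the technical heart of the proof. Finally, absolute convergence of $\sum_m\mathcal{Q}_q(m,3)$ follows from the crude bound $|\mathcal{Q}_q(m,3)|\ll \mu^2(m)\,r_2(m)^{3}m^{-9/4}$ coming from \eqref{eq:5.5}, which is summable.
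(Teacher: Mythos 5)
Your derivation of \eqref{eq:5.51} is correct and follows essentially the same route as the paper's: replace $\upphi_{q,m}$ by the truncation $\upphi_{q,m}(\cdot\,;n)$, expand the $\ell$-th power into exponentials, keep only the diagonal $\sum_{i}e_{i}\epsilon_{q}(d_{i})k_{i}/d_{i}=0$ after averaging over $[0,n!]$, and collapse the common-multiplier sums via $r_{2}(sh^{2},hd;q)=r_{2}(s,d;q)$, $r_{2,\chi}(sh^{2},hd;q)=\chi(h)r_{2,\chi}(s,d;q)$ and the $\xi$-- and $L$-series identities. Your phase bookkeeping (the identity $e^{-i\sigma\pi/4}=-e^{i\sigma 3\pi/4}$ unifying sine and cosine factors, and $\cos(3\pi a/4)=(-1)^{\ell}\cos(\pi a/4)$ for $a\equiv\ell\,(2)$) is exactly the mechanism that produces the factor $(-1)^{\ell}$, the weight $\cos(a\pi/4)$, and the $\epsilon_{q}$-twisted diagonal. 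That part is sound.

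The genuine gap is in \eqref{eq:5.52}. Your argument is complete only for $q\equiv0\,(4)$, where every $\mathfrak{r}$ is non-negative. For $q\equiv2\,(4)$ you correctly isolate the negative class (all three $d_{i}\equiv0\,(4)$; exactly one being excluded $2$-adically), but the domination step is only asserted (``I expect this to follow from the strong suppression\ldots''). Per-factor smallness of $2^{q}/d^{q-3/2}$ does not by itself suffice: the negative class and the all-odd class are sums over \emph{different} solution sets of the constraint $\sum_{i}e_{i}k_{i}/d_{i}=0$, which couples the $k_{i}$ to the $d_{i}$, so no term-by-term comparison is available; one must prove inequalities between constrained sums. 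That is precisely the content of the paper's proof: it re-parametrizes the diagonal as a star-sum over reduced triples with $(d_{1},d_{2},d_{3})=1$ times a common multiplier $\ell$ coprime to $k_{1}k_{2}k_{3}$ --- in that parametrization no triple has all $d_{i}\equiv0\,(4)$ and every surviving parity class has non-negative $\hat{\xi}$-product, so all negative contributions are confined to the even-$\ell$ tail --- then bounds that tail by $\eta_{q}=2^{-(3q-10)}$ times the non-negative odd-$\ell$ sum using $r_{2}(n,hs;q)\leq r_{2}(n,s;q)$, absorbs the $d\equiv2\,(4)$ classes into the all-odd class via $r_{2}\big(m(k/2)^{2},\ell d;q\big)\leq r_{2}\big(mk^{2},2\ell d;q\big)$, and ends with the termwise bound $\mathcal{Q}_{q}(m,3)\leq-\big(\pi^{q-1}/(2^{q+1}\Gamma(q))\big)^{3}\mu^{2}(m)m^{-9/4}\sum_{k}k^{-9/2}r_{2}^{3}(mk^{2})$, which is then summed over $m$. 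Nothing playing the role of these comparison inequalities appears in your proposal. Likewise the odd-$q$ case, which you yourself call ``the technical heart'' and leave undone (the $a=\pm3$ class and the signs of $\chi(d)$ and $r_{2,\chi}$), is not carried out; the paper at least possesses a detailed even-$q$ template that transfers, whereas your sketch supplies no comparable machinery. So, as written, your proposal establishes \eqref{eq:5.52} only for $q\equiv0\,(4)$.
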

\begin{proof}
Fix some integers $m,\ell\geq1$. The case $\mathcal{Q}_{q}(m,\ell)$ where $\ell=1,2$ was treated in Lemma 6, so we may assume that $\ell\geq3$. We may clearly suppose that $\mu^{2}(m)=1$, for otherwise $\upphi_{q,m}(t)\equiv0$ and the result is trivial. By Lemma 4, we have for any integer $n\geq1$:
\begin{equation}\label{eq:5.53}
\frac{1}{n!}\bigintssss\limits_{0}^{n!}\upphi^{\ell}_{q,m}(t)\textit{d}t=\frac{1}{n!}\bigintssss\limits_{0}^{n!}\upphi^{\ell}_{q,m}(t;n)\textit{d}t+O\Bigg(\frac{r^{\ell}_{2}(m)}{n^{1/2}m^{3\ell/4}}\Bigg)\,.
\end{equation}
For $q\equiv0\,(2)$ we have:
\begin{equation*}
\begin{split}
\upphi^{\ell}_{q,m}(t;n)&=\bigg(\frac{\varrho_{q}}{2\pi}\bigg)^{\ell}\frac{\mu^{2}(m)}{m^{3\ell/4}}\Bigg\{\sum_{d\leq n}\sum_{k=1}^{\infty}\frac{\xi\big(d;q\big)r_{2}\big(mk^{2},d;q\big)}{d^{q-3/2}k^{3/2}}\sin{\Big(2\pi\frac{k}{d}t-\frac{\pi}{4}\Big)}\Bigg\}^{\ell}=\\
&=(-1)^{\ell}\bigg(\frac{\varrho_{q}}{4\pi}\bigg)^{\ell}\frac{\mu^{2}(m)}{m^{3\ell/4}}\sum_{a\,(8)}\,\,\underset{\overset{\ell}{\underset{i=1}{\sum}}\textit{e}_{i}\equiv a\,(8)}{\sum_{\textit{e}_{1},\ldots,\textit{e}_{\ell}=\pm1}}\,\,\underset{d_{1},\ldots,d_{\ell}\leq n}{\sum_{k_{1},\ldots,k_{\ell}=1}^{\infty}}\prod_{i=1}^{\ell}\frac{\xi\big(d;q\big)r_{2}\big(mk^{2},d;q\big)}{d^{q-3/2}k^{3/2}}\cos{\Bigg(2\pi t\sum_{i}\textit{e}_{i}\frac{k_{i}}{d_{i}}+\frac{a\pi}{4}\Bigg)}\,.
\end{split}
\end{equation*}
%
Since $r_{2}\big(mk^{2}s^{2},ds;q\big)=r_{2}\big(mk^{2},d;q\big)$ and:
\begin{equation*}
\begin{split}
\sum_{s=1}^{\infty}\frac{\xi\big(sd;q\big)}{s^{q}}=\Big(1-2^{-q}\Big)\zeta(q)\Big\{\mathds{1}_{d\equiv1(2)}+(-1)^{\frac{q}{2}}2^{q}\mathds{1}_{d\equiv0(4)}\Big\}
\end{split}
\end{equation*}
on letting $n\to\infty$ in \eqref{eq:5.53} we obtain:
\begin{equation}\label{eq:5.54}
\begin{split}
\mathcal{Q}_{q}(m,\ell)&=(-1)^{\ell}\bigg(\frac{\varrho_{q}}{4\pi}\bigg)^{\ell}\frac{\mu^{2}(m)}{m^{3\ell/4}}\lim_{n\to\infty}
\sum_{a\,(8)}\,\,\underset{\overset{\ell}{\underset{i=1}{\sum}}\textit{e}_{i}\equiv a\,(8)}{\underset{d_{1},\ldots,d_{\ell}\leq n}{\underset{k_{1},\ldots,k_{\ell}\geq1}{\sum_{\textit{e}_{1},\ldots,\textit{e}_{\ell}=\pm1}}}}\prod_{i=1}^{\ell}\frac{\xi\big(d;q\big)r_{2}\big(mk^{2},d;q\big)}{d^{q-3/2}k^{3/2}}\bigintssss\limits_{0}^{1}\cos{\Bigg(2\pi t\sum_{i}\textit{e}_{i}k_{i}\frac{n!}{d_{i}}+\frac{a\pi}{4}\Bigg)}\textit{d}t=\\
&=(-1)^{\ell}\Bigg(\frac{\pi^{q-1}}{4\Gamma(q)}\Bigg)^{\ell}\frac{\mu^{2}(m)}{m^{3\ell/4}}\sum_{a\,(8)}\cos{\Big(\frac{a\pi}{4}\Big)}\underset{\overset{\ell}{\underset{i=1}{\sum}}\textit{e}_{i}\equiv a\,(8)}{\sum_{\textit{e}_{1},\ldots,\textit{e}_{\ell}=\pm1}}\,\,\,\underset{\overset{\ell}{\underset{i=1}{\sum}}\textit{e}_{i}\frac{k_{i}}{d_{i}}=0}{\sum_{d_{1},k_{1},\ldots,d_{\ell},k_{\ell}=1}^{\infty}}\,\,\prod_{i=1}^{\ell}\frac{\mathfrak{r}\big(mk^{2}_{i},d_{i}\,;q\big)}{d_{i}^{q-3/2}k_{i}^{3/2}}\,.
\end{split}
\end{equation}
For $q\equiv1\,(2)$ we have: 
\begin{equation*}
\begin{split}
&\upphi^{\ell}_{q,m}(t;n)=(-1)^{\ell}\bigg(2^{q-3}\frac{\varrho_{\chi,q}}{\pi}\bigg)^{\ell}\frac{\mu^{2}(m)}{m^{3\ell/4}}\sum_{a\,(8)}\,\,\underset{\overset{\ell}{\underset{i=1}{\sum}}\textit{e}_{i}\equiv a\,(8)}{\sum_{\textit{e}_{1},\ldots,\textit{e}_{\ell}=\pm1}}\,\,\underset{d_{1},\ldots,d_{\ell}\leq n}{\sum_{k_{1},\ldots,k_{\ell}=1}^{\infty}}\prod_{i=1}^{\ell}\frac{\mathfrak{z}\big(mk^{2},d\,;q\big)}{d^{q-3/2}k^{3/2}}\cos{\Bigg(2\pi t\sum_{i}\textit{e}_{i}\epsilon_{q}(d_{i})\frac{k_{i}}{d_{i}}+\frac{a\pi}{4}\Bigg)}
\end{split}
\end{equation*}
where $\epsilon_{q}(d)=1$ if $(d,2)=1$ and $\epsilon_{q}(d)=-1$ otherwise, and:
\begin{equation*}
\mathfrak{z}\big(mk^{2},d\,;q\big)=\left\{
        \begin{array}{ll}
            0& ;\,d\equiv2\,(4)\\\\
            \chi(d)r_{2}\big(mk^{2},d;q\big)& ;\, (d,2)=1\\\\
            (-1)^{\frac{q-1}{2}}2^{q}r_{2,\chi}\big(mk^{2},d;q\big)& ;\, d\equiv0\,(4)\,.
        \end{array}
    \right.
\end{equation*}
Since $r_{2,\chi}\big(mk^{2}s^{2},ds;q\big)=\chi(s)r_{2,\chi}\big(mk^{2},d;q\big)$ for integers $s\geq1$, on letting $n\to\infty$ in \eqref{eq:5.53} we obtain:
\begin{equation*}
\begin{split}
\mathcal{Q}_{q}(m,\ell)=(-1)^{\ell}\bigg(2^{q-3}\frac{\varrho_{\chi,q}}{\pi}\bigg)^{\ell}\frac{\mu^{2}(m)}{m^{3\ell/4}}\sum_{a\,(8)}\cos{\Big(\frac{a\pi}{4}\Big)}\underset{\overset{\ell}{\underset{i=1}{\sum}}\textit{e}_{i}\equiv a\,(8)}{\sum_{\textit{e}_{1},\ldots,\textit{e}_{\ell}=\pm1}}\,\,\,\underset{\overset{\ell}{\underset{i=1}{\sum}}\textit{e}_{i}\epsilon_{q}(d_{i})\frac{k_{i}}{d_{i}}=0}{\sum_{d_{1},k_{1},\ldots,d_{\ell},k_{\ell}=1}^{\infty}}\,\,\prod_{i=1}^{\ell}\frac{\mathfrak{z}\big(mk^{2}_{i},d_{i}\,;q\big)}{d_{i}^{q-3/2}k_{i}^{3/2}}=
\end{split}
\end{equation*}
\begin{equation}\label{eq:5.55}
\begin{split}
\qquad\qquad=(-1)^{\ell}\Bigg(\frac{\pi^{q-1}}{4\Gamma(q)}\Bigg)^{\ell}\frac{\mu^{2}(m)}{m^{3\ell/4}}\sum_{a\,(8)}\cos{\Big(\frac{a\pi}{4}\Big)}\underset{\overset{\ell}{\underset{i=1}{\sum}}\textit{e}_{i}\equiv a\,(8)}{\sum_{\textit{e}_{1},\ldots,\textit{e}_{\ell}=\pm1}}\,\,\,\underset{\overset{\ell}{\underset{i=1}{\sum}}\textit{e}_{i}\epsilon_{q}(d_{i})\frac{k_{i}}{d_{i}}=0}{\sum_{d_{1},k_{1},\ldots,d_{\ell},k_{\ell}=1}^{\infty}}\,\,\prod_{i=1}^{\ell}\frac{\mathfrak{r}\big(mk^{2}_{i},d_{i}\,;q\big)}{d_{i}^{q-3/2}k_{i}^{3/2}}\,.
\end{split}
\end{equation}
Let us now prove \eqref{eq:5.52}. We only treat the case $q\equiv0\,(2)$, the proof in the case where $q\equiv1\,(2)$ being similar. Let $m\geq1$ be an integer. We have:
\begin{equation}\label{eq:5.56}
\begin{split}
\mathcal{Q}_{q}(m,3)=-\frac{3}{2^{5/2}}\Bigg(\frac{\pi^{q-1}}{2\Gamma(q)}\Bigg)^{3}\frac{\mu^{2}(m)}{m^{9/4}}\mathcal{S}_{q}(m,3)
\end{split}
\end{equation}
where:
\begin{equation}\label{eq:5.57}
\begin{split}
\mathcal{S}_{q}(m,3)=&\sideset{}{^\star}\sum_{d_{i},k_{i}}\underset{(\ell,k_{1}k_{2}k_{3})=1}{\sum_{\ell=1}^{\infty}}\frac{1}{\ell^{3q-9/2}}\prod_{i=1}^{3}\frac{\hat{\xi}\big(\ell d_{i};q\big)r_{2}\big(mk_{i}^{2},\ell d_{i};q\big)}{d_{i}^{q-3/2}k_{i}^{3/2}}\\
&\sideset{}{^\star}\sum_{d_{i},k_{i}}=\underset{(d_{1},d_{2},d_{3})=1}{\underset{d_{i},k_{i}\in\mathbb{N}\,;\,(d_{i},k_{i})=1}{\sum_{k_{1}/d_{1}+k_{2}/d_{2}=k_{3}/d_{3}}}}
\end{split}
\end{equation}
and for integers $n\geq1$ we define $\hat{\xi}(n;q)=\mathds{1}_{n\equiv1(2)}+(-1)^{\frac{q}{2}}2^{q}\mathds{1}_{n\equiv0(4)}$. Since $r_{2}\big(\cdot\,,\cdot\,;q\big)$ is non-negative and satisfies the inequality $r_{2}\big(n,hs;q\big)\leq r_{2}\big(n,s;q\big)$ for integers $n,h,s\geq1$, we obtain the following lower bound:
\begin{equation}\label{eq:5.58}
\begin{split}
&\underset{(\ell,k_{1}k_{2}k_{3})=1}{\sum_{\ell=1}^{\infty}}\frac{1}{\ell^{3q-9/2}}\prod_{i=1}^{3}\hat{\xi}\big(\ell d_{i};q\big)r_{2}\big(mk_{i}^{2},\ell d_{i};q\big)=\prod_{i=1}^{3}\hat{\xi}\big(d_{i};q\big)\underset{(\ell,2)=1}{\underset{(\ell,k_{1}k_{2}k_{3})=1}{\sum_{\ell=1}^{\infty}}}\frac{1}{\ell^{3q-9/2}}\prod_{i=1}^{3}r_{2}\big(mk_{i}^{2},\ell d_{i};q\big)+\\
&+(-1)^{\frac{q}{2}}2^{3q}\sum_{j=2}^{\infty}\frac{1}{2^{j(3q-9/2)}}\underset{(\ell,2)=1}{\underset{(2\ell,k_{1}k_{2}k_{3})=1}{\sum_{\ell=1}^{\infty}}}\frac{1}{\ell^{3q-9/2}}\prod_{i=1}^{3}r_{2}\big(mk_{i}^{2},2^{j}\ell d_{i};q\big)\geq\Bigg\{\prod_{i=1}^{3}\hat{\xi}\big(d_{i};q\big)-\eta_{q}\Bigg\}\mathcal{U}_{\underset{d_{1},d_{2},d_{3}}{k_{1},k_{2},k_{3}}}
\end{split}
\end{equation}
where:
\begin{equation*}
\begin{split}
\mathcal{U}_{\underset{d_{1},d_{2},d_{3}}{k_{1},k_{2},k_{3}}}=\underset{(\ell,2)=1}{\underset{(\ell,k_{1}k_{2}k_{3})=1}{\sum_{\ell=1}^{\infty}}}\frac{1}{\ell^{3q-9/2}}\prod_{i=1}^{3}r_{2}\big(mk_{i}^{2},\ell d_{i};q\big)\quad;\quad\eta_{q}=\frac{1}{2^{3q-10}}\,.
\end{split}
\end{equation*}
Hence:
\begin{equation}\label{eq:5.59}
\begin{split}
\mathcal{S}_{q}(m,3)&\geq
\Big(1-\eta_{q}\Big)\underset{(d_{1}d_{2}d_{3},2)=1}{\sideset{}{^\star}\sum_{d_{i},k_{i}}}\mathcal{U}_{\underset{d_{1},d_{2},d_{3}}{k_{1},k_{2},k_{3}}}-\eta_{q}\sum_{1\leq j<s\leq3}\underset{d_{j},d_{s}\equiv2\,(4)}{\sideset{}{^\star}\sum_{d_{i},k_{i}}}\mathcal{U}_{\underset{d_{1},d_{2},d_{3}}{k_{1},k_{2},k_{3}}}+\Big(2^{2q}-\eta_{q}\Big)\sum_{1\leq j<s\leq3}\underset{d_{j},d_{s}\equiv0\,(4)}{\sideset{}{^\star}\sum_{d_{i},k_{i}}}\mathcal{U}_{\underset{d_{1},d_{2},d_{3}}{k_{1},k_{2},k_{3}}}\geq\\
&\geq\Big(1-\eta_{q}\Big)\underset{(d_{1}d_{2}d_{3},2)=1}{\sideset{}{^\star}\sum_{d_{i},k_{i}}}\mathcal{U}_{\underset{d_{1},d_{2},d_{3}}{k_{1},k_{2},k_{3}}}-\eta_{q}\sum_{1\leq j<s\leq3}\underset{d_{j},d_{s}\equiv2\,(4)}{\sideset{}{^\star}\sum_{d_{i},k_{i}}}\mathcal{U}_{\underset{d_{1},d_{2},d_{3}}{k_{1},k_{2},k_{3}}}
\end{split}
\end{equation}
Fix distinct integers $1\leq w,j,s\leq3$ with $j<s$. We have:
\begin{equation}\label{eq:5.60}
\begin{split}
&\underset{d_{j},d_{s}\equiv2\,(4)}{\sideset{}{^\star}\sum_{d_{i},k_{i}}}\mathcal{U}_{\underset{d_{1},d_{2},d_{3}}{k_{1},k_{2},k_{3}}}=\frac{1}{2^{2q-9/2}}\underset{(k_{j}k_{s},2)=1\,;\,k_{w}\equiv0\,(2)}{\underset{(d_{1}d_{2}d_{3},2)=1}{\sideset{}{^\star}\sum_{d_{i},k_{i}}}}\,\,\underset{(\ell,2)=1}{\underset{(\ell,k_{1}k_{2}k_{3})=1}{\sum_{\ell=1}^{\infty}}}\frac{1}{\ell^{3q-9/2}}\frac{r_{2}\big(m\big(k_{w}/2\big)^{2},\ell d_{w};q\big)}{d_{w}^{q-3/2}k_{w}^{3/2}}\underset{i\neq w}{\prod_{i=1}^{3}}\frac{r_{2}\big(mk_{i}^{2},2\ell d_{i};q\big)}{d_{i}^{q-3/2}k_{i}^{3/2}}\,.
\end{split}
\end{equation}
Since for $k\equiv0\,(2)$:
\begin{equation*}
\begin{split}
&r_{2}\big(m\big(k/2\big)^{2},\ell d;q\big)=\underset{a\equiv0\,(2),\,b\equiv0\,(2\ell d)}{\sum_{a^{2}+b^{2}=mk^{2}}}\bigg(\frac{|a|}{\sqrt{mk^{2}}}\bigg)^{q-1}\leq r_{2}\big(mk^{2},2\ell d;q\big)
\end{split}
\end{equation*}
it follows that:
\begin{equation*}
\begin{split}
&\frac{r_{2}\big(m\big(k_{w}/2\big)^{2},\ell d_{w};q\big)}{d_{w}^{q-3/2}k_{w}^{3/2}}\underset{i\neq w}{\prod_{i=1}^{3}}\frac{r_{2}\big(mk_{i}^{2},2\ell d_{i};q\big)}{d_{i}^{q-3/2}k_{i}^{3/2}}\leq\prod_{i=1}^{3}\frac{r_{2}\big(mk_{i}^{2},2\ell d_{i};q\big)}{d_{i}^{q-3/2}k_{i}^{3/2}}\leq\prod_{i=1}^{3}\frac{r_{2}\big(mk_{i}^{2},\ell d_{i};q\big)}{d_{i}^{q-3/2}k_{i}^{3/2}}\,.
\end{split}
\end{equation*}
Hence:
\begin{equation}\label{eq:5.61}
\begin{split}
&\underset{d_{j},d_{s}\equiv2\,(4)}{\sideset{}{^\star}\sum_{d_{i},k_{i}}}\mathcal{U}_{\underset{d_{1},d_{2},d_{3}}{k_{1},k_{2},k_{3}}}\leq\frac{1}{2^{2q-9/2}}\underset{(k_{j}k_{s},2)=1\,;\,k_{w}\equiv0\,(2)}{\underset{(d_{1}d_{2}d_{3},2)=1}{\sideset{}{^\star}\sum_{d_{i},k_{i}}}}\,\,\underset{(\ell,2)=1}{\underset{(\ell,k_{1}k_{2}k_{3})=1}{\sum_{\ell=1}^{\infty}}}\frac{1}{\ell^{3q-9/2}}\prod_{i=1}^{3}\frac{r_{2}\big(mk_{i}^{2},\ell d_{i};q\big)}{d_{i}^{q-3/2}k_{i}^{3/2}}\leq\\
&\leq\frac{1}{2^{2q-9/2}}\underset{(d_{1}d_{2}d_{3},2)=1}{\sideset{}{^\star}\sum_{d_{i},k_{i}}}\,\,\underset{(\ell,2)=1}{\underset{(\ell,k_{1}k_{2}k_{3})=1}{\sum_{\ell=1}^{\infty}}}\frac{1}{\ell^{3q-9/2}}\prod_{i=1}^{3}\frac{r_{2}\big(mk_{i}^{2},\ell d_{i};q\big)}{d_{i}^{q-3/2}k_{i}^{3/2}}=\frac{1}{2^{2q-9/2}}\underset{(d_{1}d_{2}d_{3},2)=1}{\sideset{}{^\star}\sum_{d_{i},k_{i}}}\mathcal{U}_{\underset{d_{1},d_{2},d_{3}}{k_{1},k_{2},k_{3}}}\,.
\end{split}
\end{equation}
From \eqref{eq:5.59} we deduce that:
\begin{equation}\label{eq:5.62}
\begin{split}
\mathcal{S}_{q}(m,3)&\geq\bigg\{1-\Big(1+\frac{3}{2^{2q-9/2}}\Big)\eta_{q}\bigg\}\underset{(d_{1}d_{2}d_{3},2)=1}{\sideset{}{^\star}\sum_{d_{i},k_{i}}}\mathcal{U}_{\underset{d_{1},d_{2},d_{3}}{k_{1},k_{2},k_{3}}}\geq\frac{1}{2}\underset{(d_{1}d_{2}d_{3},2)=1}{\sideset{}{^\star}\sum_{d_{i},k_{i}}}\mathcal{U}_{\underset{d_{1},d_{2},d_{3}}{k_{1},k_{2},k_{3}}}\geq\\
&\geq\frac{1}{2}\,\,\underset{(k_{1},k_{2},k_{3})=1}{\underset{k_{1},k_{2},k_{3}\geq1}{\sum_{k_{1}+k_{2}=k_{3}}}}\prod_{i=1}^{3}\frac{1}{k_{i}^{3/2}}\sum_{k=1}^{\infty}\frac{1}{k^{9/2}}\prod_{i=1}^{3}r_{2}\big(m\big(kk_{i}\big)^{2},1;q\big)\,.
\end{split}
\end{equation}
For integers $k,s\geq1$ we have:
\begin{equation*}
\begin{split}
&r_{2}\big(m\big(ks\big)^{2},1;q\big)\geq\underset{a,b\equiv0\,(s)}{\sum_{a^{2}+b^{2}=mk^{2}s^{2}}}\,\,\bigg(\frac{|a|}{\sqrt{mk^{2}s^{2}}}\bigg)^{q-1}= r_{2}\big(mk^{2},1;q\big)\geq2^{-\frac{q+1}{2}}r_{2}\big(mk^{2}\big)
\end{split}
\end{equation*}
and so, by \eqref{eq:5.62}:
\begin{equation}\label{eq:5.63}
\begin{split}
\mathcal{S}_{q}(m,3)\geq\frac{1}{2^{\frac{3q+8}{2}}}\sum_{k=1}^{\infty}\frac{1}{k^{9/2}}r^{3}_{2}\big(mk^{2}\big)\,.
\end{split}
\end{equation}
Inserting this lower bound for $\mathcal{S}_{q}(m,3)$ into \eqref{eq:5.56}, we obtain:
\begin{equation}\label{eq:5.64}
\begin{split}
\mathcal{Q}_{q}(m,3)\leq-\Bigg(\frac{\pi^{q-1}}{2^{q+1}\Gamma(q)}\Bigg)^{3}\frac{\mu^{2}(m)}{m^{9/4}}\sum_{k=1}^{\infty}\frac{1}{k^{9/2}}r^{3}_{2}\big(mk^{2}\big)\,.
\end{split}
\end{equation}
Finally, summing over all $m\geq1$ we derive:
\begin{equation}\label{eq:5.65}
\sum_{m=1}^{\infty}\mathcal{Q}_{q}(m,3)\leq-\Bigg(\frac{\pi^{q-1}}{2^{q+1}\Gamma(q)}\Bigg)^{3}\sum_{m,k=1}^{\infty}\frac{r^{3}_{2}\big(mk^{2}\big)}{\big(mk^{2}\big)^{9/4}}\mu^{2}(m)=-\Bigg(\frac{\pi^{q-1}}{2^{q+1}\Gamma(q)}\Bigg)^{3}\sum_{n=1}^{\infty}\frac{r^{3}_{2}(n)}{n^{9/4}}<0\,.
\end{equation}
This concludes the proof.
\end{proof}
\subsection{Proof of Theorem 5 \& 6}
\begin{proof}(Theorem 5)
Let $\alpha\in\mathbb{C}$. By Proposition 4 and Lemma 5 we have:
\begin{equation}\label{eq:5.66}
\Phi_{q}(\alpha)=\prod_{m=1}^{\infty}\mathcal{L}(\alpha,m)\quad;\quad\mathcal{L}(\alpha,m)=\lim_{n\to\infty}\frac{1}{n!}\bigintssss\limits_{0}^{n!}\exp{\Big(2\pi i\alpha\upphi_{q,m}(t)\Big)}\textit{d}t\,.
\end{equation}
Since $\upphi_{q,m}(\cdot)$ is bounded, $\mathcal{L}(\alpha,m)$ defines an entire function of $\alpha$ where the limit converges uniformly on any compact subset of the plane. By Lemma 4 we may find some $M=M_{\alpha}$ such that for any $m\geq M$ and any $t\in\mathbb{R}$:
\begin{equation*}
\exp{\Big(2\pi i\alpha\upphi_{q,m}(t)\Big)}=1+2\pi i\alpha\upphi_{q,m}(t)+E_{\alpha,m}(t)\quad;\quad \big|E_{\alpha,m}(t)\big|\leq|\alpha|^{2}\frac{r^{2}_{2}(m)}{m^{3/2}}\,.
\end{equation*}
By \eqref{eq:5.35} in Lemma 6 it follows that for $m\geq M$:
\begin{equation}\label{eq:5.67}
\begin{split}
&\mathcal{L}(\alpha,m)=1+E_{m}(\alpha)\quad;\quad\big|E_{m}(\alpha)\big|\leq\lim_{n\to\infty}\frac{1}{n!}\bigintssss\limits_{0}^{n!}\big|E_{\alpha,m}(t)\big|\textit{d}t\leq|\alpha|^{2}\frac{r^{2}_{2}(m)}{m^{3/2}}\,.
\end{split}
\end{equation}
Since $\sum_{m=1}^{\infty}r^{2}_{2}(m)/m^{3/2}$ converges, the estimate \eqref{eq:5.67} implies the absolute convergence of the infinite product in \eqref{eq:5.66}, uniformly
on any compact set. Hence $\Phi_{q}(\alpha)$ is an entire function of $\alpha$. Now, let us now estimate $\Phi_{q}(\alpha)$. Let $c>0$ be an absolute constant such that:
\begin{equation*}
r_{2}(m)\leq m^{c/\log\log{m}}\quad:\quad m>2
\end{equation*}
and for real $x>\textit{e}$ we write $\theta(x)=3/4-c/\log\log{x}$ so that $\digamma(x)=x^{1/\theta(x)}$. Let $0<\epsilon<\frac{1}{64}$ be a small constant, $\epsilon=\epsilon_{q}$ depending on $q$, which will be specified later. In what follows, we write $\alpha=\sigma+i\tau$, and assume $|\alpha|$ is sufficiently large in terms of the absolute constant $c$ and the parameter $q$. Set:
\begin{equation*}
\ell=\ell(\alpha)=\Big[\Big(\epsilon^{-1}|\alpha|\Big)^{1/\theta(|\alpha|)}\Big]+1\,.
\end{equation*}
The product over $1\leq m\leq \ell-1$ in \eqref{eq:5.66} is estimated trivially by applying the upper bound \eqref{eq:5.5} in Lemma 4:
\begin{equation}\label{eq:5.68}
\prod_{m=1}^{\ell-1}\big|\mathcal{L}(\alpha,m)\big|\leq\exp{\bigg(a_{q}|\tau|\sum_{m=1}^{\ell-1}\frac{\mu^{2}(m)}{m^{3/4}}r_{2}(m)\bigg)}\leq\exp{\bigg(b_{q}\epsilon^{-1}|\tau|\digamma\big(|\alpha|\big)^{1/4}\bigg)}
\end{equation}
for some positive constant $b_{q}$. Suppose now $m\geq\ell$. Then:
\begin{equation}\label{eq:5.69}
\frac{r_{2}(m)}{m^{3/4}}|\alpha|\leq m^{-\theta(m)}|\alpha|\leq\big(\epsilon^{-1}|\alpha|\big)^{-\frac{\theta(m)}{\theta(|\alpha|)}}|\alpha|\leq\epsilon
\end{equation}
and it follows from \eqref{eq:5.35} in Lemma 6 wand the upper bound \eqref{eq:5.5} for $\upphi_{q,m}(t)$ in Lemma 4 that:
\begin{equation}\label{eq:5.70}
\begin{split}
&\mathcal{L}(\alpha,m)=1-\frac{\big(2\pi\alpha\big)^{2}}{2}\mathcal{Q}_{q}(m,2)+\mathcal{R}_{m}(\alpha)\quad;\quad\mathcal{R}_{m}(\alpha)=\sum_{j=3}^{\infty}\frac{\mathcal{Q}_{q}(m,j)}{j!}\big(2\pi i\alpha\big)^{j}\\
&\big|\mathcal{R}_{m}(\alpha)\big|\leq\bigg\{\frac{2}{3}\pi a_{q}\epsilon\exp{\Big(2\pi a_{q}\epsilon\Big)}\bigg\}\frac{\big(2\pi|\alpha|\big)^{2}}{2}\mathcal{Q}_{q}(m,2)\,.
\end{split}
\end{equation}
We now specify $\epsilon$. We choose $0<\epsilon<\frac{1}{64}$ such that:
\begin{equation*}
\begin{split}
2\pi a_{q}\epsilon^{1/2}\exp{\Big(2\pi a_{q}\epsilon\Big)}\leq1\,.
\end{split}
\end{equation*}
With this choice of $\epsilon$ we have:
\begin{equation}\label{eq:5.71}
\begin{split}
\big|\mathcal{R}_{m}(\alpha)\big|\leq\epsilon^{1/2}\frac{\big(2\pi|\alpha|\big)^{2}}{2}\mathcal{Q}_{q}(m,2)\,.
\end{split}
\end{equation}
Since $\big|\mathcal{L}(\alpha,m)-1\big|\leq\epsilon<\frac{1}{2}$, on rewriting \eqref{eq:5.70} in the form:
\begin{equation}\label{eq:5.72}
\mathcal{L}(\alpha,m)=\exp\bigg(-\frac{\big(2\pi\alpha\big)^{2}}{2}\mathcal{Q}_{q}(m,2)+\widetilde{\mathcal{R}}_{m}(\alpha)\bigg)
\end{equation}
we find that:
\begin{equation}\label{eq:5.73}
\big|\widetilde{\mathcal{R}}_{m}(\alpha)\big|\leq\epsilon^{1/2}\big(2\pi|\alpha|\big)^{2}\mathcal{Q}_{q}(m,2)\,.
\end{equation}
Hence:
\begin{equation}\label{eq:5.74}
\big|\mathcal{L}(\alpha,m)\big|\leq\exp\bigg(-\frac{\pi^{2}}{2}\Big(\sigma^{2}-3\tau^{2}\Big)\mathcal{Q}_{q}(m,2)\bigg)
\end{equation}
It follows that:
\begin{equation}\label{eq:5.75}
\prod_{m\geq\ell}\big|\mathcal{L}(\alpha,m)\big|\leq\exp\bigg(-\frac{\pi^{2}}{2}\Big(\sigma^{2}-3\tau^{2}\Big)\sum_{m\geq\ell}\mathcal{Q}_{q}(m,2)\bigg)\,.
\end{equation}
By \eqref{eq:5.38} and \eqref{eq:5.39} in Lemma 6:
\begin{equation*}
\begin{split}
&\frac{\pi^{2}}{2}\sum_{m\geq\ell}\mathcal{Q}_{q}(m,2)\geq \frac{\pi^{2}}{2}\mathrm{A}_{q}\ell^{-1/2}\log{2\ell}\geq D_{q}\digamma^{-1/2}\big(|\alpha|\big)\log{|\alpha|}\\
&\frac{3\pi^{2}}{2}\sum_{m\geq\ell}\mathcal{Q}_{q}(m,2)\leq\frac{3\pi^{2}}{2} \mathrm{B}_{q}\ell^{-1/2}\log{2\ell}\leq \widetilde{D}_{q}\digamma^{-1/2}\big(|\alpha|\big)\log{|\alpha|}
\end{split}
\end{equation*}
for some constants $\widetilde{D}_{q},D_{q}>0$. Taking $C_{q}= 1+\textit{max}\Big\{\widetilde{D}_{q},\, \frac{1}{D_{q}},\, b_{q}\epsilon^{-1}\Big\}$ we find that:
\begin{equation}\label{eq:5.76}
\begin{split}
\prod_{m\geq\ell}\big|\mathcal{L}(\alpha,m)\big|\leq\exp{\bigg(-\Big(C^{-1}_{q}\sigma^{2}-C_{q}\tau^{2}\Big)\digamma^{-1/2}\big(|\alpha|\big)\log{|\alpha|}\bigg)}
\end{split}
\end{equation}
which together with \eqref{eq:5.68} gives:
\begin{equation}\label{eq:5.77}
\begin{split}
\big|\Phi_{q}(\alpha)\big|=\prod_{m=1}^{\infty}\big|\mathcal{L}(\alpha,m)\big|\leq\exp{\bigg(-\Big(C^{-1}_{q}\sigma^{2}-C_{q}\tau^{2}\Big)\digamma^{-1/2}\big(|\alpha|\big)\log{|\alpha|}+C_{q}|\tau|\digamma^{1/4}\big(|\alpha|\big)\bigg)}
\end{split}
\end{equation}
as claimed. The estimates for the derivatives of $\Phi_{q}(\sigma)$ on the real axis are now straightforward. For $\sigma\in\mathbb{R}$, $|\sigma|$ large, it follows from \eqref{eq:5.77} that:
\begin{equation*}
\begin{split}
\underset{|\omega-\sigma|=1}{\textit{max}}\,\big|\Phi_{q}(\omega)\big|\leq\exp{\bigg(-2K_{q}\sigma^{2}\digamma^{-1/2}\big(|\sigma|\big)\log{|\sigma|}\bigg)}\quad;\quad K_{q}=\frac{1}{2^{6}C_{q}}
\end{split}
\end{equation*}
and so, by Cauchy’s integral formula for the $j$-th derivative we obtain:
\begin{equation*}
\begin{split}
\big|\Phi_{q}^{(j)}(\sigma)\big|=\frac{j!}{2\pi}\Bigg|\,\,\bigintssss\limits_{|\omega-\sigma|=1}\frac{\Phi_{q}(\omega)}{(\omega-\sigma)^{j+1}}\textit{d}\omega\Bigg|\leq j!\underset{|\omega-\sigma|=1}{\textit{max}}\,\big|\Phi_{q}(\omega)\big|\leq\exp{\bigg(-K_{q}\sigma^{2}\digamma^{-1/2}\big(|\sigma|\big)\log{|\sigma|}\bigg)}\,.
\end{split}
\end{equation*}
This completes the proof of Theorem 5.
\end{proof}
\begin{proof}
\textit{(Theorem 6)} Let $x\in\mathbb{C}$ be a complex number. By \eqref{eq:5.2} in Theorem 5 we have for $\sigma\in\mathbb{R}$, $|\sigma|$ large:
\begin{equation*}
\begin{split}
\big|\Phi_{q}(\sigma)\big|\leq\exp{\Bigg(-C^{-1}_{q}\sigma^{2}\digamma^{-1/2}\big(|\sigma|\big)\log{|\sigma|}\Bigg)}.
\end{split}
\end{equation*}
Since $\sigma^{2}\digamma^{-1/2}\big(|\sigma|\big)=|\sigma|^{\frac{4}{3}+O\big(1/\log\log{|\sigma|}\big)}$, it follows that the integral:
\begin{equation*}
\mathcal{P}_{q}(x)=\bigintssss\limits_{-\infty}^{\infty}\Phi_{q}(\sigma)\exp{\Big(-2\pi ix\sigma\Big)}\textit{d}\sigma
\end{equation*}
converges absolutely, and thus defines an entire function of $x$. Let us now estimate $\mathcal{P}^{(j)}_{q}(x)$ for large $|x|$, $x\in\mathbb{R}$ and $j\geq0$ a non-negative integer. We have:
\begin{equation}\label{eq:5.78}
\begin{split}
\mathcal{P}^{(j)}_{q}(x)=
\big(-2\pi i\big)^{j}\bigintssss\limits_{\ell(0)}^{}\alpha^{j}\Phi_{q}(\alpha)\exp{\Big(-2\pi ix\alpha\Big)}\textit{d}\alpha
\end{split}
\end{equation}
where for $\tau\in\mathbb{R}$ we set $\ell(\tau)=\big\{\sigma+i\tau:\sigma\in\mathbb{R}\big\}$. Since $\Phi_{q}(\alpha)$ is an entire function of $\alpha$, by Cauchy's theorem and the decay estimate \eqref{eq:5.2} in Theorem 5, we may shift the line of integration to:
\begin{equation}\label{eq:5.79}
\begin{split}
\mathcal{P}^{(j)}_{q}(x)=\big(-2\pi i\big)^{j}\exp{\Big(2\pi x\tau\Big)}\bigintssss\limits_{-\infty}^{\infty}\big(\sigma+i\tau\big)^{j}\Phi_{q}(\sigma+i\tau)\exp{\Big(-2\pi ix\sigma\Big)}\textit{d}\sigma
\end{split}
\end{equation}
for some $\tau=\tau_{x}$ to be determined later, which satisfies $\textit{sgn}(\tau)=-\textit{sgn}(x)$. Thus:
\begin{equation}\label{eq:5.80}
\begin{split}
\big|\mathcal{P}^{(j)}_{q}(x)\big|&\leq\Big(2\pi C_{q}|\tau|\Big)^{j+1}\exp{\Big(-2\pi |x||\tau|\Big)}\bigintssss\limits_{-\infty}^{\infty}\Big(\sigma^{2}+1\Big)^{j/2}\big|\Phi_{q}(C_{q}\tau\sigma+i\tau)\big|\textit{d}\sigma=\\
&=\Big(2\pi C_{q}|\tau|\Big)^{j+1}\exp{\Big(-2\pi |x||\tau|\Big)}\Bigg\{\bigintssss\limits_{|\sigma|\leq\sqrt{2}}\ldots\textit{d}\sigma+\bigintssss\limits_{|\sigma|>\sqrt{2}}\ldots\textit{d}\sigma\Bigg\}\,.
\end{split}
\end{equation}
In what follows, $|\tau|$ is assumed to be large in terms of the parameter $j$, and the constants $C_{q}$ and $c$ appearing in the statement of Theorem 5. In the range $|\sigma|\leq\sqrt{2}$ we have by Theorem 5:
\begin{equation*}
\begin{split}
\big|\Phi_{q}(\sigma+i\tau)\big|\leq\exp{\Bigg(2C_{q}|\tau|F^{1/4}\Big(\big|C_{q}\tau\sigma+i\tau\big|\Big)\Bigg)}\leq\exp{\Bigg(\big(2C_{q}\big)^{2}|\tau|\digamma^{1/4}\big(|\tau|\big)\Bigg)}
\end{split}
\end{equation*}
and so:
\begin{equation}\label{eq:5.81}
\begin{split}
\mathcal{I}_{1}=\bigintssss\limits_{|\sigma|\leq\sqrt{2}}\Big(\sigma^{2}+1\Big)^{j/2}\big|\Phi_{q}(C_{q}\tau\sigma+i\tau)\big|\textit{d}\sigma\leq\exp{\Bigg(\big(4C_{q}\big)^{2}|\tau|\digamma^{1/4}\big(|\tau|\big)\Bigg)}\,.
\end{split}
\end{equation}
Referring to Theorem 5 once again, we find that in the range $|\sigma|>\sqrt{2}$:
\begin{equation*}
\begin{split}
\big|\Phi_{q}(\sigma+i\tau)\big|\leq\exp{\Bigg(-2C^{2}_{q}\big||\tau|^{4\theta(|\tau|)+1}\sigma\big|^{\frac{1}{4\theta(|\tau|)}}\Big\{\big|G\big(|\tau|\big)\sigma\big|^{\frac{8\theta(|\tau|)-3}{4\theta(|\tau|)}}-1\Big\}\Bigg)}
\end{split}
\end{equation*}
where:
\begin{equation*}
\begin{split}
G\big(|\tau|\big)=\bigg(\frac{\log{|\tau|}}{8C^{2}_{q}}\bigg)^{\frac{4\theta(|\tau|)}{8\theta(|\tau|)-3}}|\tau|^{\frac{4\theta(|\tau|)-3}{8\theta(|\tau|)-3}}\quad;\quad\theta(|\tau|)=\frac{3}{4}-\frac{c}{\log\log{|\tau|}}\,.
\end{split}
\end{equation*}
Making a change of variables, we obtain:
\begin{equation}\label{eq:5.82}
\begin{split}
\mathcal{I}_{2}&=\bigintssss\limits_{|\sigma|>\sqrt{2}}\Big(\sigma^{2}+1\Big)^{j/2}\big|\Phi_{q}(C_{q}\tau\sigma+i\tau)\big|\textit{d}\sigma\leq\\
&\leq\Big(2G^{-1}\big(|\tau|\big)\Big)^{j+1}\bigintssss\limits_{0}^{\infty}\sigma^{j}\exp{\Bigg(-2C^{2}_{q}\Big(|\tau|^{4\theta(|\tau|)+1}G^{-1}\big(|\tau|\big)\sigma\Big)^{\frac{1}{4\theta(|\tau|)}}\Big\{\sigma^{\frac{8\theta(|\tau|)-3}{4\theta(|\tau|)}}-1\Big\}\Bigg)}\textit{d}\sigma=\\
&=\Big(2G^{-1}\big(|\tau|\big)\Big)^{j+1}\Bigg\{\bigintssss\limits_{0}^{y}\ldots\textit{d}\sigma+\bigintssss\limits_{y}^{\infty}\ldots\textit{d}\sigma\Bigg\}\quad;\quad y=2^{\frac{4\theta(|\tau|)}{8\theta(|\tau|)-3}}\,.
\end{split}
\end{equation}
In the range $|\sigma|\leq y$ we estimate trivially:
\begin{equation}\label{eq:5.83}
\begin{split}
\mathcal{I}_{3}=\bigintssss\limits_{0}^{y}\ldots\textit{d}\sigma\leq2^{2j+2}\exp{\Bigg(12C^{2}_{q}\Big(|\tau|^{4\theta(|\tau|)+1}G^{-1}\big(|\tau|\big)\Big)^{\frac{1}{4\theta(|\tau|)}}\Bigg)}\,.
\end{split}
\end{equation}
In the range $|\sigma|> y$, we make a change of variables obtaining:
\begin{equation}\label{eq:5.84}
\begin{split}
&\mathcal{I}_{4}=\bigintssss\limits_{y}^{\infty}\ldots\textit{d}\sigma\leq j!\Big(|\tau|^{-4\theta(|\tau|)-1}G\big(|\tau|\big)^{8\theta(|\tau|)-3}\Big)^{\frac{j+1}{8\theta(|\tau|)-2}}\,.
\end{split}
\end{equation}
We find that the integral $\mathcal{I}_{3}$ dominates, and so:
\begin{equation}\label{eq:5.85}
\begin{split}
\bigintssss\limits_{-\infty}^{\infty}\Big(\sigma^{2}+1\Big)^{j/2}\big|\Phi_{q}(C_{q}\tau\sigma+i\tau)\big|\textit{d}\sigma\leq\Big(2^{6}G^{-1}\big(|\tau|\big)\Big)^{j+1}\exp{\Bigg(12C^{2}_{q}\Big(|\tau|^{4\theta(|\tau|)+1}G^{-1}\big(|\tau|\big)\Big)^{\frac{1}{4\theta(|\tau|)}}\Bigg)}\,.
\end{split}
\end{equation}
By \eqref{eq:5.80} we arrive at:
\begin{equation}\label{eq:5.86}
\begin{split}
\big|\mathcal{P}^{(j)}_{q}(x)\big|\leq\Big(2^{7}C_{q}\pi|\tau|G^{-1}\big(|\tau|\big)\Big)^{j+1}\exp{\Bigg(-2\pi|\tau|\bigg\{|x|-6C^{2}_{q}\pi^{-1}\Big(|\tau|G^{-1}\big(|\tau|\big)\Big)^{\frac{1}{4\theta(|\tau|)}}\bigg\}\Bigg)}\,.
\end{split}
\end{equation}
We choose $|\tau|=|x|^{3-\frac{\beta}{\log\log{|x|}}}$, with $\beta=48c$. A simple calculation gives (remember that $|x|$ is assumed to large):
\begin{equation*}
\begin{split}
|x|-6C^{2}_{q}\pi^{-1}\Big(|\tau|G^{-1}\big(|\tau|\big)\Big)^{\frac{1}{4\theta(|\tau|)}}>\frac{|x|}{2}
\end{split}
\end{equation*}
and we obtain:
\begin{equation}\label{eq:5.87}
\begin{split}
\big|\mathcal{P}^{(j)}_{q}(x)\big|\leq\Big(2^{7}C_{q}\pi|\tau|G^{-1}\big(|\tau|\big)\Big)^{j+1}\exp{\Bigg(-\pi|x|^{4-\beta/\log\log{|x|}}\Bigg)}\leq\exp{\Bigg(-|x|^{4-\beta/\log\log{|x|}}\Bigg)}\,.
\end{split}
\end{equation}
It remains to show that $\mathcal{P}_{q}(x)$ defines a probability density. This will be a consequence of the proof of Theorem 1.
\end{proof}
\section{Proof of the main results: Theorem 1, 2 \& 3}
\noindent
We have everything in place for the proof of the main theorems. We begin with the proof of Theorem 1.
\begin{proof}
\textit{(Theorem 1)} We shall prove that \eqref{eq:2.1} holds with $\mathcal{P}_{q}(\alpha)$ defined as in \eqref{eq:5.3}. The analytic continuation of $\mathcal{P}_{q}(\alpha)$ and the decay estimates \eqref{eq:2.2} for its derivatives have already been established in Theorem 6, where it remained to show that $\mathcal{P}_{q}(\alpha)$ is a probability density. This will follow from our proof. Let us begin by showing that for any $\mathcal{F}\in C^{\infty}_{0}(\mathbb{R})$ the limit:
\begin{equation}\label{eq:6.1}
\mathscr{L}(\mathcal{F})\overset{\textit{def}}{=}\lim\limits_{X\to\infty}\frac{1}{X}\bigintssss\limits_{ X}^{2X}\mathcal{F}\Big(\mathcal{E}_{q}(x)/x^{2q-1}\Big)\textit{d}x
\end{equation}
exists. To that end, let $\mathcal{F}\in C^{\infty}_{0}(\mathbb{R})$. Since $\mathcal{F}$ is smooth and compactly supported, we have:
\begin{equation*}
\big|\mathcal{F}(w)-\mathcal{F}(y)\big|\leq c_{\mathcal{F}}|w-y|
\end{equation*}
for all $w,y\in\mathbb{R}$, where $c_{\mathcal{F}}>0$ is some constant. Thus, for any integer $M\geq1$ and any $X>0$:
\begin{equation}\label{eq:6.2}
\frac{1}{X}\bigintssss\limits_{ X}^{2X}\mathcal{F}\Big(\mathcal{E}_{q}(x)/x^{2q-1}\Big)\textit{d}x=\frac{1}{X}\bigintssss\limits_{ X}^{2X}\mathcal{F}\Big(\sum_{m\leq M}\upphi_{q,m}\big(\gamma_{m}x^{2}\big)\Big)\textit{d}x+\mathscr{E}_{\mathcal{F}}\big(X,M\big)
\end{equation}
where $\mathscr{E}_{\mathcal{F}}\big(X,M\big)$ satisfies the bound:
\begin{equation}\label{eq:6.3}
\big|\mathscr{E}_{\mathcal{F}}\big(X,M\big)\big|\leq c_{\mathcal{F}}\,\frac{1}{X}\bigintssss\limits_{X}^{2X}\Big|\mathcal{E}_{q}(x)/x^{2q-1}-\sum_{m\leq M}\upphi_{q,m}\big(\gamma_{m}x^{2}\big)\Big|\textit{d}x\,. 
\end{equation}
It follows from Theorem 4 that:
\begin{equation}\label{eq:6.4}
\lim_{M\to\infty}\limsup_{X\to\infty}\big|\mathscr{E}_{\mathcal{F}}\big(X,M\big)\big|=0\,.
\end{equation}
As $\mathcal{F}\in C^{\infty}_{0}(\mathbb{R})$, we have using the notation as in $\S3.2$:
\begin{equation}\label{eq:6.5}
\frac{1}{X}\bigintssss\limits_{ X}^{2X}\mathcal{F}\Big(\sum_{m\leq M}\upphi_{q,m}\big(\gamma_{m}x^{2}\big)\Big)\textit{d}x=\bigintssss\limits_{-\infty}^{\infty}\widehat{\mathcal{F}}(\alpha)\mathfrak{M}_{X}\Big(\prod_{m\leq M}\mathscr{F}_{\alpha,m}\Big)\textit{d}\alpha
\end{equation}
where $\widehat{\mathcal{F}}$ denotes the Fourier transform of $\mathcal{F}$. Letting $X\to\infty$ in \eqref{eq:6.5}, we have by Lemma 5 and Proposition 4 together with an application of Lebesgue’s Dominated Convergence Theorem:
\begin{equation}\label{eq:6.6}
\lim_{X\to\infty}\frac{1}{X}\bigintssss\limits_{ X}^{2X}\mathcal{F}\Big(\sum_{m\leq M}\upphi_{q,m}\big(\gamma_{m}x^{2}\big)\Big)\textit{d}x=\bigintssss\limits_{-\infty}^{\infty}\widehat{\mathcal{F}}(\alpha)\prod_{m\leq M}\mathcal{L}\big(\alpha,m\big)\textit{d}\alpha\,.
\end{equation}
Letting $M\to\infty$ in \eqref{eq:6.6}, we have by Theorem 5 and Lebesgue’s Dominated Convergence Theorem:
\begin{equation}\label{eq:6.7}
\begin{split}
\lim_{M\to\infty}\lim_{X\to\infty}\frac{1}{X}\bigintssss\limits_{ X}^{2X}\mathcal{F}\Big(\sum_{m\leq M}\upphi_{q,m}\big(\gamma_{m}x^{2}\big)\Big)\textit{d}x&=\lim_{M\to\infty}\bigintssss\limits_{-\infty}^{\infty}\widehat{\mathcal{F}}(\alpha)\prod_{m\leq M}\mathcal{L}\big(\alpha,m\big)\textit{d}\alpha=\\
&=\bigintssss\limits_{-\infty}^{\infty}\widehat{\mathcal{F}}(\alpha)\Phi_{q}(\alpha)\textit{d}\alpha=\bigintssss\limits_{-\infty}^{\infty}\mathcal{F}(\alpha)\widehat{\Phi}_{q}(\alpha)\textit{d}\alpha
\end{split}
\end{equation}
where in the last equality we made use of Parseval's theorem which is justified due to the decay estimates for $\Phi_{q}(\alpha)$ in Theorem 5. Since by definition $\mathcal{P}_{q}(\alpha)=\widehat{\Phi}_{q}(\alpha)$, it follows from \eqref{eq:6.7} that:
\begin{equation}\label{eq:6.8}
\begin{split}
\frac{1}{X}\bigintssss\limits_{ X}^{2X}\mathcal{F}\Big(\sum_{m\leq M}\upphi_{q,m}\big(\gamma_{m}x^{2}\big)\Big)\textit{d}x=\bigintssss\limits_{-\infty}^{\infty}\mathcal{F}(\alpha)\mathcal{P}_{q}(\alpha)\textit{d}\alpha+\mathscr{E}^{\flat}_{\mathcal{F}}\big(X,M\big)
\end{split}
\end{equation}
where $\mathscr{E}^{\flat}_{\mathcal{F}}\big(X,M\big)$ satisfies:
\begin{equation}\label{eq:6.9}
\lim_{M\to\infty}\lim_{X\to\infty}\big|\mathscr{E}^{\flat}_{\mathcal{F}}\big(X,M\big)\big|=0\,.
\end{equation}
Inserting \eqref{eq:6.8} into \eqref{eq:6.2}, we deduce from \eqref{eq:6.4} and \eqref{eq:6.9} that:
\begin{equation}\label{eq:6.10}
\begin{split}
\limsup_{X\to\infty}\bigg|\frac{1}{X}\bigintssss\limits_{ X}^{2X}\mathcal{F}\Big(\mathcal{E}_{q}(x)/x^{2q-1}\Big)\textit{d}x&-\bigintssss\limits_{-\infty}^{\infty}\mathcal{F}(\alpha)\mathcal{P}_{q}(\alpha)\textit{d}\alpha\bigg|\leq\\
&\leq\lim_{M\to\infty}\lim_{X\to\infty}\big|\mathscr{E}^{\flat}_{\mathcal{F}}\big(X,M\big)\big|+\lim_{M\to\infty}\limsup_{X\to\infty}\big|\mathscr{E}_{\mathcal{F}}\big(X,M\big)\big|=0\,.
\end{split}
\end{equation}
From \eqref{eq:6.10} we conclude that for any $\mathcal{F}\in C^{\infty}_{0}(\mathbb{R})$ the limit $\mathscr{L}(\mathcal{F})$ exists and is given by:
\begin{equation}\label{eq:6.11}
\lim\limits_{X\to\infty}\frac{1}{X}\bigintssss\limits_{ X}^{2X}\mathcal{F}\Big(\mathcal{E}_{q}(x)/x^{2q-1}\Big)\textit{d}x=\bigintssss\limits_{-\infty}^{\infty}\mathcal{F}(\alpha)\mathcal{P}_{q}(\alpha)\textit{d}\alpha\,.
\end{equation}
The result extends easily to the class $C_{0}(\mathbb{R})$ of continuous functions with compact support. Indeed, let $\omega(y)\geq0$ be a smooth bump function supported in $[-1,1]$ having total mass 1, and for integers $n\geq1$ define $\omega_{n}(y)=n\omega(ny)$. Let $\mathcal{F}\in C_{0}(\mathbb{R})$, and define $\mathcal{F}_{n}=\mathcal{F}\star\omega_{n}\in C^{\infty}_{0}(\mathbb{R})$ where $\star$ denotes the Euclidean convolution operator. We have for any integer $n\geq1$ and any $X>0$: 
\begin{equation}\label{eq:6.12}
\begin{split}
\bigg|\frac{1}{X}\bigintssss\limits_{ X}^{2X}\mathcal{F}\Big(&\mathcal{E}_{q}(x)/x^{2q-1}\Big)\textit{d}x-\bigintssss\limits_{-\infty}^{\infty}\mathcal{F}(\alpha)\mathcal{P}_{q}(\alpha)\textit{d}\alpha\bigg|\leq\\
&\leq\bigg|\frac{1}{X}\bigintssss\limits_{ X}^{2X}\mathcal{F}_{n}\Big(\mathcal{E}_{q}(x)/x^{2q-1}\Big)\textit{d}x-\bigintssss\limits_{-\infty}^{\infty}\mathcal{F}(\alpha)\mathcal{P}_{q}(\alpha)\textit{d}\alpha\bigg|+\underset{y\in\mathbb{R}}{\textit{max}}\,\,\big|\mathcal{F}(y)-\mathcal{F}_{n}(y)\big|\,.
\end{split}
\end{equation}
The claim now follows from \eqref{eq:6.12} Since $\underset{y\in\mathbb{R}}{\textit{max}}\,\,\big|\mathcal{F}(y)-\mathcal{F}_{n}(y)\big|\longrightarrow0$ and:
\begin{equation}\label{eq:6.13}
\lim_{n\to\infty}\lim\limits_{X\to\infty}\frac{1}{X}\bigintssss\limits_{ X}^{2X}\mathcal{F}_{n}\Big(\mathcal{E}_{q}(x)/x^{2q-1}\Big)\textit{d}x=\lim_{n\to\infty}\bigintssss\limits_{-\infty}^{\infty}\mathcal{F}_{n}(\alpha)\mathcal{P}_{q}(\alpha)\textit{d}\alpha=\bigintssss\limits_{-\infty}^{\infty}\mathcal{F}(\alpha)\mathcal{P}_{q}(\alpha)\textit{d}\alpha\,.
\end{equation}
Let us now show that the limit $\mathscr{L}(\mathcal{F})$ exists for all continuous bounded functions. Suppose then that $\mathcal{F}$ is a continuous bounded function. Let $\psi\in C^{\infty}_{0}(\mathbb{R})$ satisfy $0\leq\psi(y)\leq1$ and $\psi(y)=1$ for $|y|\leq1$, and set $\psi_{n}(y)=\psi(y/n)$. Define $\mathcal{F}_{n}(y)=\mathcal{F}(y)\psi_{n}(y)\in C_{0}(\mathbb{R})$, and let $c_{\mathcal{F}}$ be given by $c_{\mathcal{F}}=\underset{y\in\mathbb{R}}{\sup}|\mathcal{F}(y)|$. From the definition of $\psi_{n}$ we have: 
\begin{equation}\label{eq:6.14}
\begin{split}
\frac{1}{X}\bigintssss\limits_{ X}^{2X}\Big|\mathcal{F}\Big(\mathcal{E}_{q}(x)/x^{2q-1}&\Big)-\mathcal{F}_{n}\Big(\mathcal{E}_{q}(x)/x^{2q-1}\Big)\Big|\textit{d}x\leq\\
&\leq c_{\mathcal{F}}\,\frac{1}{X}\bigintssss\limits_{X}^{2X}\Big\{1-\psi_{n}\Big(\mathcal{E}_{q}(x)/x^{2q-1}\Big)\Big\}\textit{d}x\leq\frac{c_{\mathcal{F}}}{n^{2}}\frac{1}{X}\bigintssss\limits_{ X}^{2X}\big|\mathcal{E}_{q}(x)/x^{2q-1}\big|^{2}\textit{d}x\,.
\end{split}
\end{equation}
We now refer to \cite{gath2019analogue}, \textit{Theorem 2} $\S1.3$. For $X>2$ we have:
\begin{equation}\label{eq:6.15}
\begin{split}
\frac{1}{X}\bigintssss\limits_{ X}^{2X}\mathcal{E}^{2}_{q}(x)\textit{d}x=
\mathfrak{D}_{q}X^{2(2q-1)}+O\Big( X^{2(2q-1)-1}\log^{2}{X}\Big)
\end{split}
\end{equation}
where $\mathcal{D}_{q}>0$ is some constant. Hence for any integer $n\geq1$ and any $X>2$:
\begin{equation}\label{eq:6.16}
\begin{split}
\frac{1}{X}\bigintssss\limits_{ X}^{2X}\Big|\mathcal{F}\Big(\mathcal{E}_{q}(x)/x^{2q-1}&\Big)-\mathcal{F}_{n}\Big(\mathcal{E}_{q}(x)/x^{2q-1}\Big)\Big|\textit{d}x\leq\frac{c_{\mathcal{F}}}{n^{2}}\bigg\{\mathfrak{D}_{q}+O\Big( X^{-1}\log^{2}{X}\Big)\bigg\}\,.
\end{split}
\end{equation}
It follows that:
\begin{equation}\label{eq:6.17}
\begin{split}
\lim_{n\to\infty}\limsup_{X\to\infty}\frac{1}{X}\bigintssss\limits_{ X}^{2X}\Big|\mathcal{F}\Big(\mathcal{E}_{q}(x)/x^{2q-1}&\Big)-\mathcal{F}_{n}\Big(\mathcal{E}_{q}(x)/x^{2q-1}\Big)\Big|\textit{d}x=0\,.
\end{split}
\end{equation}
Since \eqref{eq:6.11} holds for the class $C_{0}(\mathbb{R})$, we have by the definition of $\psi_{n}$:
\begin{equation}\label{eq:6.18}
\begin{split}
\lim\limits_{X\to\infty}\frac{1}{X}\bigintssss\limits_{ X}^{2X}\mathcal{F}_{n}\Big(\mathcal{E}_{q}(x)/x^{2q-1}\Big)\textit{d}x=\bigintssss\limits_{|\alpha|\leq n}^{}\mathcal{F}(\alpha)\mathcal{P}_{q}(\alpha)\textit{d}\alpha+\bigintssss\limits_{|\alpha|>n}^{}\mathcal{F}(\alpha)\psi_{n}(\alpha)\mathcal{P}_{q}(\alpha)\textit{d}\alpha\,.
\end{split}
\end{equation}
It then follows from the rapid decay of $\mathcal{P}_{q}(\alpha)$ that:
\begin{equation}\label{eq:6.19}
\begin{split}
\lim_{n\to\infty}\lim\limits_{X\to\infty}\frac{1}{X}\bigintssss\limits_{ X}^{2X}\mathcal{F}_{n}\Big(\mathcal{E}_{q}(x)/x^{2q-1}\Big)\textit{d}x=\bigintssss\limits_{-\infty}^{\infty}\mathcal{F}(\alpha)\mathcal{P}_{q}(\alpha)\textit{d}\alpha\,.
\end{split}
\end{equation}
Since for any integer $n\geq1$ and any $X>0$ we have:
\begin{equation}\label{eq:6.20}
\begin{split}
&\bigg|\frac{1}{X}\bigintssss\limits_{ X}^{2X}\mathcal{F}\Big(\mathcal{E}_{q}(x)/x^{2q-1}\Big)\textit{d}x-\bigintssss\limits_{-\infty}^{\infty}\mathcal{F}(\alpha)\mathcal{P}_{q}(\alpha)\textit{d}\alpha\bigg|\leq\bigg|\frac{1}{X}\bigintssss\limits_{ X}^{2X}\mathcal{F}_{n}\Big(\mathcal{E}_{q}(x)/x^{2q-1}\Big)\textit{d}x-\bigintssss\limits_{-\infty}^{\infty}\mathcal{F}(\alpha)\mathcal{P}_{q}(\alpha)\textit{d}\alpha\bigg|+\\
&+\frac{1}{X}\bigintssss\limits_{ X}^{2X}\Big|\mathcal{F}\Big(\mathcal{E}_{q}(x)/x^{2q-1}\Big)-\mathcal{F}_{n}\Big(\mathcal{E}_{q}(x)/x^{2q-1}\Big)\Big|\textit{d}x
\end{split}
\end{equation}
we conclude from \eqref{eq:6.17} and \eqref{eq:6.19} that for any continuous bounded function $\mathcal{F}$, the limit $\mathscr{L}(\mathcal{F})$ in \eqref{eq:6.1} exists and is given by:
\begin{equation}\label{eq:6.21}
\lim\limits_{X\to\infty}\frac{1}{X}\bigintssss\limits_{ X}^{2X}\mathcal{F}\Big(\mathcal{E}_{q}(x)/x^{2q-1}\Big)\textit{d}x=\bigintssss\limits_{-\infty}^{\infty}\mathcal{F}(\alpha)\mathcal{P}_{q}(\alpha)\textit{d}\alpha\,.
\end{equation}
The extension of \eqref{eq:6.21} to the class of bounded piecewise-continuous functions is now straightforward. It remains to show that $\mathcal{P}_{q}(\alpha)$ defines a probability density. To that end, we note that the LHS of \eqref{eq:6.21} is real and non-negative whenever $\mathcal{F}$ is. By Theorem 6 we know that $\mathcal{P}_{q}(\alpha)$ is an entire function of $\alpha$, and in particular is continuous, so by choosing a suitable test function $\mathcal{F}$ in \eqref{eq:6.21} we conclude that $\mathcal{P}_{q}(\alpha)\geq0$ for real $\alpha$. Taking $\mathcal{F}\equiv1$ in \eqref{eq:6.21} we have $\int_{-\infty}^{\infty}\mathcal{P}_{q}(\alpha)\textit{d}\alpha=1$. The proof of Theorem 1 is therefor complete.
\end{proof}
\noindent
We now proceed to present the proof pf Theorem 2.
\begin{proof}
\textit{(Theorem 2)} For integers $M,j\geq1$ we have: 
\begin{equation}\label{eq:6.22}
\begin{split}
\frac{1}{X}\bigintssss\limits_{ X}^{2X}\bigg(\sum_{m\leq M}\upphi_{q,m}\big(\gamma_{m}x^{2}\big)\bigg)^{j}\textit{d}x=
\sum_{s=1}^{j}\underset{\ell_{1},\ldots,\ell_{s}\geq1}{\sum_{\ell_{1}+\cdots+\ell_{s}=j}}\frac{j!}{\ell_{1}!\cdots\ell_{s}!}\sum_{M\geq m_{s}>\cdots>m_{1}\geq 1}\frac{1}{X}\bigintssss\limits_{X}^{2X}\prod_{i=1}^{s}\upphi^{\ell_{i}}_{q,m_{i}}\big(\gamma_{m}x^{2}\big)\textit{d}x\,.
\end{split}
\end{equation}
It follows from Proposition 5 that:
\begin{equation}\label{eq:6.23}
\begin{split}
\lim_{X\to\infty}\frac{1}{X}\bigintssss\limits_{ X}^{2X}\bigg(\sum_{m\leq M}^{}\upphi_{q,m}\big(\gamma_{m}x^{2}\big)\bigg)^{j}\textit{d}x=\sum_{s=1}^{j}\underset{\ell_{1},\ldots\,,\ell_{s}\geq1}{\sum_{\ell_{1}+\cdots+\ell_{s}=j}}\frac{j!}{\ell_{1}!\cdots\ell_{s}!}\sum_{M\geq m_{s}>\cdots>m_{1}\geq 1}\prod_{i=1}^{s}\mathcal{Q}_{q}(m_{i},\ell_{i})\,.
\end{split}
\end{equation}
By \eqref{eq:5.35} in Lemma 6 we have that $\mathcal{Q}_{q}(m,1)=0$ for any integer $m\geq1$, and so by the upper bound \eqref{eq:5.5} for $\upphi_{q,m}(\cdot)$ in Lemma 4 we may extend the summation over the variables $m_{s}>\cdots>m_{1}\geq1$ all the way to infinity, obtaining:
\begin{equation}\label{eq:6.24}
\begin{split}
&\lim_{X\to\infty}\frac{1}{X}\bigintssss\limits_{ X}^{2X}\bigg(\sum_{m\leq M}^{}\upphi_{q,m}\big(\gamma_{m}x^{2}\big)\bigg)^{j}\textit{d}x=\\
&=\sum_{s=1}^{j}\,\,\underset{\,\,\ell_{1},\,\ldots\,,\ell_{s}\geq1}{\sum_{\ell_{1}+\cdots+\ell_{s}=j}}\,\,\frac{j!}{\ell_{1}!\cdots\ell_{s}!}\underset{\,\,\,m_{s}>\cdots>m_{1}}{\sum_{m_{1},\,\ldots\,,m_{s}=1}^{\infty}}\prod_{i=1}^{s}\mathcal{Q}_{q}(m_{i},\ell_{i})+O_{q,j}\bigg(M^{-1/2}\Big(\log{2M}\Big)^{2^{j}-1}\bigg)
\end{split}
\end{equation}
where the series in \eqref{eq:6.24} converges absolutely. Thus, on letting $M\to\infty$ in \eqref{eq:6.24} we conclude that for any integer $j\geq1$ the limit:
\begin{equation}\label{eq:6.25}
\begin{split}
\textit{L}(j)\overset{\textit{def}}{=}\lim_{M\to\infty}\lim_{X\to\infty}\frac{1}{X}\bigintssss\limits_{ X}^{2X}\bigg(\sum_{m\leq M}\upphi_{q,m}\big(\gamma_{m}x^{2}\big)\bigg)^{j}\textit{d}x=\sum_{s=1}^{j}\underset{\ell_{1},\ldots,\ell_{s}\geq1}{\sum_{\ell_{1}+\cdots+\ell_{s}=j}}\frac{j!}{\ell_{1}!\cdots\ell_{s}!}\underset{m_{s}>\cdots>m_{1}}{\sum_{m_{1},\ldots,m_{s}=1}^{\infty}}\prod_{i=1}^{s}\mathcal{Q}_{q}(m_{i},\ell_{i})
\end{split}
\end{equation}
exists, where the series in \eqref{eq:6.25} converges absolutely. Now, fix the integer $j\geq1$ and let $\psi\in C^{\infty}_{0}(\mathbb{R})$ satisfy $0\leq\psi(y)\leq1$ and $\psi(y)=1$ for $|y|\leq1$. Set $\psi_{n}(y)=\psi(y/n)$, and let $\mathcal{F}_{n}(y)=y^{j}\psi_{n}(y)\in C^{\infty}_{0}(\mathbb{R})$. For $M\geq1$ an integer, we have by the definition of $\psi_{n}$:
\begin{equation}\label{eq:6.26}
\begin{split}
\frac{1}{X}\bigintssss\limits_{ X}^{2X}\Bigg|\bigg(\sum_{m\leq M}\upphi_{q,m}\big(\gamma_{m}x^{2}\big)\bigg)^{j}-\mathcal{F}_{n}\bigg(\sum_{m\leq M}\upphi_{q,m}\big(\gamma_{m}x^{2}\big)\bigg)\bigg|\textit{d}x\leq\frac{1}{n^{j}}\frac{1}{X}\bigintssss\limits_{ X}^{2X}\bigg(\sum_{m=1}^{M}\upphi_{q,m}\big(\gamma_{m}x^{2}\big)\bigg)^{2j}\textit{d}x\,.
\end{split}
\end{equation}
Since $\textit{L}(2j)$ exists, it follows that:
\begin{equation}\label{eq:6.27}
\begin{split}
\lim_{n\to\infty}\limsup_{M\to\infty}\limsup_{X\to\infty}\frac{1}{X}\bigintssss\limits_{ X}^{2X}\Bigg|\bigg(\sum_{m\leq M}\upphi_{q,m}\big(\gamma_{m}x^{2}\big)\bigg)^{j}-\mathcal{F}_{n}\bigg(\sum_{m\leq M}\upphi_{q,m}\big(\gamma_{m}x^{2}\big)\bigg)\bigg|\textit{d}x\leq\textit{L}(2j)\lim_{n\to\infty}n^{-j}=0\,.
\end{split}
\end{equation}
By Theorem 1 and the definition of $\psi_{n}$ we have:
\begin{equation}\label{eq:6.28}
\begin{split}
\lim_{M\to\infty}\lim_{X\to\infty}\frac{1}{X}\bigintssss\limits_{ X}^{2X}\mathcal{F}_{n}\Big(\sum_{m\leq M}\upphi_{q,m}\big(\gamma_{m}x^{2}\big)\Big)\textit{d}x=\bigintssss\limits_{|\alpha|\leq n}^{}\alpha^{j}\mathcal{P}_{q}(\alpha)\textit{d}\alpha+\bigintssss\limits_{|\alpha|> n}^{}\alpha^{j}\psi_{n}(\alpha)\mathcal{P}_{q}(\alpha)\textit{d}\alpha\,.
\end{split}
\end{equation}
It follows from the rapid decay of $\mathcal{P}_{q}(\alpha)$ that:
\begin{equation}\label{eq:6.29}
\begin{split}
\lim_{n\to\infty}\lim_{M\to\infty}\lim_{X\to\infty}\frac{1}{X}\bigintssss\limits_{ X}^{2X}\mathcal{F}_{n}\Big(\sum_{m\leq M}\upphi_{q,m}\big(\gamma_{m}x^{2}\big)\Big)\textit{d}x=\bigintssss\limits_{-\infty}^{\infty}\alpha^{j}\mathcal{P}_{q}(\alpha)\textit{d}\alpha\,.
\end{split}
\end{equation}
Since for any integers $n,M\geq1$ and any $X>0$ we have:
\begin{equation}\label{eq:6.30}
\begin{split}
\bigg|\frac{1}{X}\bigintssss\limits_{ X}^{2X}\bigg(\sum_{m\leq M}\upphi_{q,m}\big(\gamma_{m}x^{2}\big)\bigg)^{j}\textit{d}x-\bigintssss\limits_{-\infty}^{\infty}\alpha^{j}\mathcal{P}_{q}(\alpha)\textit{d}\alpha\bigg|&\leq\bigg|\frac{1}{X}\bigintssss\limits_{ X}^{2X}\mathcal{F}_{n}\bigg(\sum_{m\leq M}\upphi_{q,m}\big(\gamma_{m}x^{2}\big)\bigg)\textit{d}x-\bigintssss\limits_{-\infty}^{\infty}\alpha^{j}\mathcal{P}_{q}(\alpha)\textit{d}\alpha\bigg|+\\
&+\frac{1}{X}\bigintssss\limits_{ X}^{2X}\Bigg|\bigg(\sum_{m\leq M}\upphi_{q,m}\big(\gamma_{m}x^{2}\big)\bigg)^{j}-\mathcal{F}_{n}\bigg(\sum_{m\leq M}\upphi_{q,m}\big(\gamma_{m}x^{2}\big)\bigg)\bigg|\textit{d}x
\end{split}
\end{equation}
we conclude from the above that:
\begin{equation}\label{eq:6.31}
\begin{split}
\bigintssss\limits_{-\infty}^{\infty}\alpha^{j}\mathcal{P}_{q}(\alpha)\textit{d}\alpha=\sum_{s=1}^{j}\,\,\underset{\,\,\ell_{1},\,\ldots\,,\ell_{s}\geq1}{\sum_{\ell_{1}+\cdots+\ell_{s}=j}}\,\,\frac{j!}{\ell_{1}!\cdots\ell_{s}!}\underset{\,\,\,m_{s}>\cdots>m_{1}}{\sum_{m_{1},\,\ldots\,,m_{s}=1}^{\infty}}\prod_{i=1}^{s}\mathcal{Q}_{q}(m_{i},\ell_{i})
\end{split}
\end{equation}
where the series on the RHS of \eqref{eq:6.31} converges absolutely. By \eqref{eq:5.51} in Lemma 7 we obtain the expansion \eqref{eq:2.4} of $\mathcal{Q}_{q}(m,\ell)$ stated in Theorem 2. Finally, by \eqref{eq:5.35} in Lemma 6 and \eqref{eq:5.52} in Lemma 7, it follows from \eqref{eq:6.31} that the $j$-th moment of $\mathcal{P}_{q}(\alpha)$ in the particular case $j=1,3$ satisfies:
\begin{equation}\label{eq:6.32}
\begin{split}
\bigintssss\limits_{-\infty}^{\infty}\alpha\mathcal{P}_{q}(\alpha)\textit{d}\alpha=\sum_{m=1}^{\infty}\mathcal{Q}_{q}(m,1)=0\quad;\quad\bigintssss\limits_{-\infty}^{\infty}\alpha^{3}\mathcal{P}_{q}(\alpha)\textit{d}\alpha=\sum_{m=1}^{\infty}\mathcal{Q}_{q}(m,3)<0\,.
\end{split}
\end{equation}
This concludes the proof.
\end{proof}
\noindent
The proof of Theorem 3 is now straightforward.
\begin{proof}(Theorem 3)
First we consider the case $0<\lambda<2$. To that end, fix some $0<\lambda<2$ and let $\mathcal{F}_{n}(y)=|y|^{\lambda}\psi_{n}(y)\in C_{0}(\mathbb{R})$, where $\psi_{n}(y)=\psi(y/n)$ and $\psi\in C^{\infty}_{0}(\mathbb{R})$ satisfies $0\leq\psi(y)\leq1$, and $\psi(y)=1$ for $|y|\leq1$. By \eqref{eq:6.15} and the definition of $\psi_{n}$ we have for $X>2$:
\begin{equation}\label{eq:6.33}
\begin{split}
\frac{1}{X}\bigintssss\limits_{ X}^{2X}\Big|\,\,\big|\mathcal{E}_{q}(x)/x^{2q-1}\big|^{\lambda}-\mathcal{F}_{n}\Big(\mathcal{E}_{q}(x)/x^{2q-1}\Big)\Big|\textit{d}x\leq\frac{1}{n^{2-\lambda}}\bigg\{\mathfrak{D}_{q}+O\Big( X^{-1}\log^{2}{X}\Big)\bigg\}\,.
\end{split}
\end{equation}
Since $2-\lambda>0$, it follows that:
\begin{equation}\label{eq:6.34}
\begin{split}
\lim_{n\to\infty}\limsup_{X\to\infty}\frac{1}{X}\bigintssss\limits_{ X}^{2X}\Big|\,\,\big|\mathcal{E}_{q}(x)/x^{2q-1}\big|^{\lambda}-\mathcal{F}_{n}\Big(\mathcal{E}_{q}(x)/x^{2q-1}\Big)\Big|\textit{d}x=0\,.
\end{split}
\end{equation}
By Theorem 1 and the definition of $\psi_{n}$ we have:
\begin{equation}\label{eq:6.35}
\begin{split}
\lim_{X\to\infty}\frac{1}{X}\bigintssss\limits_{ X}^{2X}\mathcal{F}_{n}\Big(\mathcal{E}_{q}(x)/x^{2q-1}\Big)\textit{d}x=\bigintssss\limits_{|\alpha|\leq n}^{}|\alpha|^{\lambda}\mathcal{P}_{q}(\alpha)\textit{d}\alpha+\bigintssss\limits_{|\alpha|> n}^{}|\alpha|^{\lambda}\psi_{n}(\alpha)\mathcal{P}_{q}(\alpha)\textit{d}\alpha\,.
\end{split}
\end{equation}
It follows from the rapid decay of $\mathcal{P}_{q}(\alpha)$ that:
\begin{equation}\label{eq:6.36}
\begin{split}
\lim_{n\to\infty}\lim_{X\to\infty}\frac{1}{X}\bigintssss\limits_{ X}^{2X}\mathcal{F}_{n}\Big(\mathcal{E}_{q}(x)/x^{2q-1}\Big)\textit{d}x=\bigintssss\limits_{-\infty}^{\infty}|\alpha|^{\lambda}\mathcal{P}_{q}(\alpha)\textit{d}\alpha\,.
\end{split}
\end{equation}
Since for any integer $n\geq1$ and any $X>0$ we have:
\begin{equation}\label{eq:6.37}
\begin{split}
&\bigg|\frac{1}{X}\bigintssss\limits_{ X}^{2X}\big|\mathcal{E}_{q}(x)/x^{2q-1}\big|^{\lambda}\textit{d}x-\bigintssss\limits_{-\infty}^{\infty}|\alpha|^{\lambda}\mathcal{P}_{q}(\alpha)\textit{d}\alpha\bigg|\leq
\bigg|\frac{1}{X}\bigintssss\limits_{ X}^{2X}\mathcal{F}_{n}\Big(\mathcal{E}_{q}(x)/x^{2q-1}\Big)\textit{d}x-\bigintssss\limits_{-\infty}^{\infty}|\alpha|^{\lambda}\mathcal{P}_{q}(\alpha)\textit{d}\alpha\bigg|+\\
&+\frac{1}{X}\bigintssss\limits_{ X}^{2X}\Big|\,\,\big|\mathcal{E}_{q}(x)/x^{2q-1}\big|^{\lambda}-\mathcal{F}_{n}\Big(\mathcal{E}_{q}(x)/x^{2q-1}\Big)\Big|\textit{d}x
\end{split}
\end{equation}
we conclude from the above that:
\begin{equation}\label{eq:6.38}
\begin{split}
\lim_{X\to\infty}\frac{1}{X}\bigintssss\limits_{ X}^{2X}\big|\mathcal{E}_{q}(x)/x^{2q-1}\big|^{\lambda}\textit{d}x=\bigintssss\limits_{-\infty}^{\infty}|\alpha|^{\lambda}\mathcal{P}_{q}(\alpha)\textit{d}\alpha\,.
\end{split}
\end{equation}
The exact same arguments give in the case $\lambda=1$:
\begin{equation}\label{eq:6.39}
\begin{split}
\lim_{X\to\infty}\frac{1}{X}\bigintssss\limits_{ X}^{2X}\mathcal{E}_{q}(x)/x^{2q-1}\textit{d}x=\bigintssss\limits_{-\infty}^{\infty}\alpha\mathcal{P}_{q}(\alpha)\textit{d}\alpha\,.
\end{split}
\end{equation}
We now treat the case $\lambda=2$. We reexamine in detail the statement of \textit{Theorem 2} of \cite{gath2019analogue}, \textit{eq. (1.6) and  (1.7)} therein. Renormalizing by $x^{2q-1}$ and taking limits, we have in the case where $q\equiv0\,(2)$:
\begin{equation}\label{eq:6.40}
\begin{split}
\lim_{X\to\infty}\frac{1}{X}\bigintssss\limits_{ X}^{2X}\big|\mathcal{E}_{q}(x)/x^{2q-1}\big|^{2}\textit{d}x=\frac{1}{2}
\Bigg(\frac{\pi^{q-1}}{2\Gamma(q)}\Bigg)^{2}\Bigg\{\,\,\underset{\,\,(d,2m)=1}{\sum_{d,m=1}^{\infty}}\frac{r^{2}_{2}\big(m,d;q\big)}{m^{3/2}d^{2q-3}}+2^{2q}\underset{d\equiv0(4)}{\underset{\,\,(d,m)=1}{\sum_{d,m=1}^{\infty}}}\frac{r^{2}_{2}\big(m,d;q\big)}{m^{3/2}d^{2q-3}}\Bigg\}
\end{split}
\end{equation}
and in the case where $q\equiv1\,(2)$:
\begin{equation}\label{eq:6.41}
\begin{split}
\lim_{X\to\infty}\frac{1}{X}\bigintssss\limits_{ X}^{2X}\big|\mathcal{E}_{q}(x)/x^{2q-1}\big|^{2}\textit{d}x=\frac{1}{2}
\Bigg(\frac{\pi^{q-1}}{2\Gamma(q)}\Bigg)^{2}\Bigg\{\,\,\underset{\,\,(d,2m)=1}{\sum_{d,m=1}^{\infty}}\frac{r^{2}_{2}\big(m,d;q\big)}{m^{3/2}d^{2q-3}}+2^{2q}\underset{d\equiv0(4)}{\underset{\,\,(d,m)=1}{\sum_{d,m=1}^{\infty}}}\frac{r^{2}_{2,\chi}\big(m,d;q\big)}{m^{3/2}d^{2q-3}}\Bigg\}\,.
\end{split}
\end{equation}
By \eqref{eq:5.36} and \eqref{eq:5.37} in Lemma 6 it follows from Theorem 2 that for $q\equiv0\,(2)$:
\begin{equation}\label{eq:642}
\begin{split}
&\bigintssss\limits_{-\infty}^{\infty}\alpha^{2}\mathcal{P}_{q}(\alpha)\textit{d}\alpha=\sum_{m=1}^{\infty}\mathcal{Q}_{q}(m,2)=\\
&=\frac{1}{2}\Bigg(\frac{\pi^{q-1}}{2\Gamma(q)}\Bigg)^{2}\Bigg\{\underset{\,\,\,(d,2mk^{2})=1}{\sum_{m,d,k=1}^{\infty}}\frac{r^{2}_{2}\big(mk^{2},d;q\big)}{d^{2q-3}\big(mk^{2}\big)^{3/2}}\mu^{2}(m)+2^{2q}\underset{\,\,d\equiv0\,(4)}{\underset{\,\,(d,mk^{2})=1}{\sum_{m,d,k=1}^{\infty}}}\frac{r^{2}_{2}\big(mk^{2},d;q\big)}{d^{2q-3}\big(mk^{2}\big)^{3/2}}\mu^{2}(m)\Bigg\}=\\
&=\lim_{X\to\infty}\frac{1}{X}\bigintssss\limits_{ X}^{2X}\big|\mathcal{E}_{q}(x)/x^{2q-1}\big|^{2}\textit{d}x
\end{split}
\end{equation}
and for $q\equiv1\,(2)$:
\begin{equation}\label{eq:6.43}
\begin{split}
&\bigintssss\limits_{-\infty}^{\infty}\alpha^{2}\mathcal{P}_{q}(\alpha)\textit{d}\alpha=\sum_{m=1}^{\infty}\mathcal{Q}_{q}(m,2)=\\
&=\frac{1}{2}\Bigg(\frac{\pi^{q-1}}{2\Gamma(q)}\Bigg)^{2}\Bigg\{\underset{\,\,\,(d,2mk^{2})=1}{\sum_{m,d,k=1}^{\infty}}\frac{r^{2}_{2}\big(mk^{2},d;q\big)}{d^{2q-3}\big(mk^{2}\big)^{3/2}}\mu^{2}(m)+2^{2q}\underset{\,\,d\equiv0\,(4)}{\underset{\,\,(d,mk^{2})=1}{\sum_{m,d,k=1}^{\infty}}}\frac{r^{2}_{2,\chi}\big(mk^{2},d;q\big)}{d^{2q-3}\big(mk^{2}\big)^{3/2}}\mu^{2}(m)\Bigg\}=\\
&=\lim_{X\to\infty}\frac{1}{X}\bigintssss\limits_{ X}^{2X}\big|\mathcal{E}_{q}(x)/x^{2q-1}\big|^{2}\textit{d}x\,.
\end{split}
\end{equation}
%
%
%
%
%
%
This concludes the proof.
\end{proof}
\noindent
\textit{\textbf{Acknowledgements.} I would like to express my sincere gratitude to Prof. Amos Nevo for his support throughout the writing of this paper. I would also like to thank Prof. Ze{\'e}v Rudnick for a stimulating discussion on this fascinating subject, which subsequently lead to the writing of this paper.    
}

\end{document}